\def\fCenter{\mbox{\ $\vdash$\ }}
\newenvironment{bprooftree}
{\leavevmode\hbox\bgroup}
{\DisplayProof\egroup}
\DeclareMathOperator{\dom}{dom}
\DeclareMathOperator{\lAC}{\lp-AC^{0,0}}
\newcommand{\lto}{\multimap}
\DeclareMathOperator{\minus}{\dot{-}}
\newcommand{\zerosAfter}[2]{\overline{#1, #2}}
\newcommand{\N}{\mathbb{N}}
\DeclareMathOperator{\IQC}{IQC}
\DeclareMathOperator{\RCA}{RCA}
\DeclareMathOperator{\EL}{EL}
\DeclareMathOperator{\ELexalpha}{EL^{\exists \alpha a}_0}
\DeclareMathOperator{\MP}{MP}
\DeclareMathOperator{\lp}{\ell}
\DeclareMathOperator{\EAPAomegal}{E-APA^{\omega}_{\lp}}
\DeclareMathOperator{\EAPAomegaeq}{E-APA^{\omega}_{\doteq}}
\DeclareMathOperator{\LL}{LL}
\DeclareMathOperator{\ELPAomegal}{E-LPA^{\omega}_{\lp}}
\DeclareMathOperator{\ELPAomegaeq}{E-LPA^{\omega}_{\doteq}}
\DeclareMathOperator{\EPAomega}{E-PA^{\omega}}
\DeclareMathOperator{\QFACnil}{QF-AC^{0,0}}
\DeclareMathOperator{\wit}{wit}
\DeclareMathOperator{\lth}{lth}
\DeclareMathOperator{\con}{con}
\newcommand{\forallL}{\forall^{\lp}}
\newcommand{\existsL}{\exists^{\lp}}
\newcommand{\existsLeps}{\exists^{\lp}_{\epsilon}}
\newcommand{\with}{\mathbin{\&}}
\newcommand{\Par}{\mathbin{\rotatebox[origin=c]{180}{\&}}}
\newcommand{\place}{\mathalpha{\square}}
\newcommand{\pair}[2]{j(#1, #2)}
\DeclareMathOperator{\sg}{sg}
\newcommand{\Aqf}{A_{qf}}
\newcommand{\Aat}{A_{at}}
\newcommand{\Anl}{A_{nl}}
\newcommand{\Sem}[2]{\llbracket #1 \rrbracket_{#2}}
\newcommand{\Real}[3]{\lvert #1 \rvert^{#2}_{#3}}
\newcommand{\tupl}[1]{\boldsymbol{#1}}
\newcommand{\emL}[1]{#1^{\bullet}}
\newcommand{\comp}[1]{\vec{#1}}
\newcommand{\halts}[1]{#1 \mathclose{\downarrow}}
\newcommand{\period}{\text{.}}
\newcommand{\comma}{\text{,}}
\theoremstyle{plain}
\newtheorem{theorem}{Theorem}[section]
\newtheorem{lemma}[theorem]{Lemma}
\newtheorem{corollary}[theorem]{Corollary}
\theoremstyle{remark}
\newtheorem{remark}[theorem]{Remark}
\theoremstyle{definition}
\newtheorem{definition}[theorem]{Definition}
\title[Characterization of Weihrauch reducibility]{The characterization of\\Weihrauch reducibility in systems containing $\EPAomega + \QFACnil$}
\author{Patrick Uftring}
\address{\parbox{\linewidth}{Department of Mathematics\\
Technische Universit\"at Darmstadt\\
Schlossgartenstra\ss e 7\\
64289 Darmstadt, Germany}}
\email{patrick\_juergen.uftring@stud.tu-darmstadt.de}
\keywords{Weihrauch reducibility, linear logic, Dialectica interpretation, nonstandard arithmetic, higher-order computability theory, phase semantics}
\subjclass[2020]{03B47, 03F52, 03D65, 03D30}
\thanks{\copyright~The Association for Symbolic Logic 2020. This is the accepted version of a paper published in \emph{The Journal of Symbolic Logic} 86(1), pp.~224--261, \href{https://doi.org/10.1017/jsl.2020.53}{DOI:10.1017/jsl.2020.53}.}
\begin{document}

\begin{abstract}
	We characterize Weihrauch reducibility in $\EPAomega + \QFACnil$ and all systems containing it by the provability in a linear variant of the same calculus using modifications of G\"odel's Dialectica interpretation that incorporate ideas from linear logic, nonstandard arithmetic, higher-order computability, and phase semantics.
\end{abstract}

\maketitle

\section{Introduction}

Many theorems in mathematics can be formulated using partial multi-valued functions, we call them \emph{problems}. The arguments on which problems are defined are \emph{instances}, and the values they map to are \emph{solutions}. For example, Weak K\H{o}nig's Lemma can be interpreted as a problem whose instances are infinite binary trees and whose solutions are infinite paths in such trees.

Naturally, one wants to compare the strength of different problems. A fruitful approach is the following: Let $P$ and $Q$ be two problems. If we can map any instance $x$ of $Q$ to an instance $f(x)$ of $P$, and any solution $y$ of $f(x)$ to a solution $g(x, y)$ of $x$, then $Q$ \emph{reduces} to $P$.

\begin{center}
	\begin{tikzcd}
	\dom(P) \ni f(x) \arrow[r] & y \in P(f(x)) \arrow[d, "g"]\\
	\dom(Q) \ni x \arrow[u, "f"] \arrow[r] & g(x, y) \in Q(x)
	\end{tikzcd}
\end{center}

In the late 1980s, Klaus Weihrauch defined this notion of reducibility in two technical reports (cf. \cite[p.~5]{Weihrauch} and \cite[p.~5]{Weihrauch2}) where the mappings $f$ and $g$ are required to be continuous and problems are defined on Baire space. The modern formulation of \emph{Weihrauch reducibility} (cf. Definition 2.1 in \cite[p.~147]{weihrauchDegrees}), which was originally called \emph{computable reducibility}, uses \emph{computable} mappings $f$ and $g$, extending reductions to problems on more general, so-called \emph{represented} spaces. The Weihrauch reduction from $Q$ to $P$ expresses that we need exactly one instance of $P$ to solve $Q$.

The proposed goal is to find a characterization of Weihrauch reducibility that is of the form
\begin{equation*}
	\forall x (A(x) \to \exists y B(x, y)) \to \forall u (C(u) \to \exists v D(u, v))
\end{equation*}
where $A(x)$ and $C(u)$ are predicates expressing that $x$ and $u$ are instances of $Q$ and $P$, respectively. Moreover, $B(x, y)$ and $D(u, v)$ hold if and only if $y$ is a solution for $x$ and $v$ is a solution for $u$, respectively.

\subsection*{First results about a proof-theoretic characterization}

In his 2017 article \cite{kuyper2017}\\\emph{On Weihrauch reducibility and intuitionistic reverse mathematics}, Rutger Kuyper proposed a characterization of compositional Weihrauch reducibility formalized in $\RCA_0$, the base-system of reverse mathematics with induction restricted to $\Sigma^0_1$-formulas, using the intuitionistic fragment $\EL_0$ of $\RCA_0$ together with Markov's principle $\MP$. \emph{Compositionally Weihrauch reducing} $Q$ to $P$ means that there is a fixed natural number $n$ such that $Q$ reduces to the composition of $n$ copies of $P$, i.e.~we use $P$ $n$-many times in order to yield $Q$. Moreover, he also proposed a characterization of true, i.e.~non-compositional Weihrauch reducibility using an affine fragment $(\EL_0 + \MP)^{\exists \alpha a}$ of $\EL_0 + \MP$. At the end of this paper, we will discuss some issues with these results:

\begin{enumerate}[label=\arabic*)]
	\item There are problems $P$ and $Q$ for which we can prove $P \to Q$ such that $Q$ does not reduce to a finite composition of $P$. However, the original formulation of Kuyper's first result claims that $Q$ compositionally Weihrauch-reduces to $P$.
	\item The definition of $\EL_0$ used in the results does not provide disjunction. While this connective can be simulated both in the classical variant $\RCA_0$ and the version $\EL$ with full induction, $\EL_0$ without disjunction is not able to do so, and is therefore not as strong as intuitionistic logic.
	\item The second result about characterizing true Weihrauch reducibility already fails for a rather simple class of problems, namely those where the set of instances is unrestricted.
\end{enumerate}

The question if there exists such a connection between logic and Weihrauch reducibility remains. Since G\"odel's Dialectica interpretation and Weihrauch reducibility seem to share some similarities as already noticed by other researchers, e.g.~Fernando Ferreira, this is where we made our own attempts at finding a characterization of Weihrauch reducibility in $\EPAomega + \QFACnil$ and related calculi.

\subsection*{Using G\"odel's Dialectica Interpretation to extract Weihrauch programs}

For the characterizing calculus, we start off with \emph{linear logic}, a refinement of classical logic published in 1987 by Jean-Yves Girard (cf.~\cite{girard1987}). In order to introduce arithmetic to this system, we embed the axioms of \emph{extensional Peano arithmetic in all finite types} $\EPAomega$ (cf.~1.6 in \cite{troelstra1973} and \cite[pp.~46--51]{kohlenbach2008}) using a variation of Girard's embedding from intuitionistic into linear logic. In order to differentiate quantifiers that are used within the formalization of problems from those that describe the existence of a solution for each instance, we borrow the standard predicate \emph{st} from \emph{nonstandard arithmetic} together with its axioms from work due to Benno van den Berg, Eyvind Briseid, and Pavol Safarik about a Dialectica interpretation variant for nonstandard arithmetic (cf.~\cite{nonstandard}). We rename this predicate to the \emph{linear predicate} $\lp$ as both its behavior and computational content are different within the context of linear logic. Finally, we add some axioms that lower the restrictions imposed by linear logic on weakening, contraction, and searches in well-behaved situations. The implication of problems is then expressed in the following way
\begin{equation}\tag{$*$}
	\forallL x (A(x) \lto \existsL y B(x, y)) \lto \forallL u (C(u) \lto \existsL v D(u, v))
\end{equation}
where the superscript $\lp$ restricts $x$, $y$, $u$, and $v$ to \emph{linear} values, meaning that reasoning about properties of these values is restricted by linear logic.

The main result of this paper (Theorem \ref{thm:secondCharacterization}) will be that this implication characterizes a natural formalization of Weihrauch reducibility in $\EPAomega + \QFACnil$ and all systems containing it using \emph{associates} (cf.~\cite{kleene}, \cite{kreisel}, and 2.6 in \cite{troelstra1973}) in order to express computability.

For the proof of this result, we apply a modified version of G\"odel's Dialectica interpretation to ($*$). This modification is based both on functional interpretations for linear logic by Valeria de Paiva, Masaru Shirahata, and Paolo Oliva (cf.~\cite{dePaiva89}, \cite{dePaiva1991}, \cite{shirahata}, \cite{olivaClassical}, \cite{lmcsFerreiraOliva}, and \cite{springerFerreiraOliva}), and on a functional interpretation of the previously mentioned nonstandard arithmetic (cf.~\cite{nonstandard}) for the interpretation of the linear predicate. We refine this variant of the Dialectica interpretation even further by incorporating computability into the interpretation of the linear predicate. After these first steps, we arrive at the following characterization of Weihrauch reducibility (Theorem \ref{thm:firstCharacterization})
\begin{equation}\tag{$\dagger$}
\forallL x (A(x) \lto \existsLeps y B(x, y)) \lto \forallL u (C(u) \lto \existsLeps v D(u, v))
\end{equation}
where the parameter $\epsilon$ is a free variable whose value can let both $\existsLeps$ behave as normal linear existential quantifiers or remove them together with the subformula they quantify over from the statement, which we use to prove that the first program of the Weihrauch reduction always terminates.

\subsection*{Using phase semantics to ensure termination}
In order to land at the more symmetric and elegant version ($*$), we apply a second modified version of G\"odel's Dialectica interpretation beforehand: This functional interpretation applies tags to all linear predicates. We can choose the values of these tags for both negative occurrences of the predicate. In order to extract the tags of both positive occurrences, we apply a simplified version of Girard's phase semantics (cf.~\cite[pp.~17--28]{girard1987}) to the terms and formulas produced by this functional interpretation. It turns out that both existential quantifiers share the same tag, which is different from the tag shared by both universal quantifiers. By substituting the tags for the existential quantifiers, we can convert the formula ($*$) into the characterization ($\dagger$) and reuse the result that we will have proven in the first part of this paper.

\subsection*{Further literature}

There exists further work about the application of proof theory in form of modified realizability to Weihrauch reducibility, namely by Jeffry L. Hirst and Carl Mummert (cf.~\cite{hirst2019}), and Makoto Fujiwara (cf.~\cite{fuji2020}). These results are mostly about direct applications to reverse mathematics for restricted classes of problems.

\section{Linear Peano Arithmetic}

\begin{definition}[$\LL$] Like classical logic, linear logic ($\LL$) starts with a set of function symbols from which terms are defined inductively. Moreover, for a set of predicate symbols, the formulas of linear logic are defined as follows:
	\begin{itemize}
		\item $1$, $0$, $\top$, $\bot$,
		\item $P_n(t_1, \dots, t_n)$, $P_n^{\bot}(t_1, \dots, t_n)$,
		\item $A \otimes B$, $A \Par B$, $A \oplus B$, $A \with B$,
		\item $!A$, $?A$,
		\item $\exists x A$, and $\forall x A$
	\end{itemize}
	where $P_n$ is an $n$-ary predicate symbol and $t_1$, \dots, $t_n$ are terms.
	We define the involution $A^{\bot}$ as abbreviation recursively:
	\begin{itemize}
		\item $1^{\bot} :\equiv \bot$, $0^{\bot} :\equiv \top$, $\top^{\bot} :\equiv 0$, $\bot^{\bot} :\equiv 1$,
		\item $(P_n(t_1, \dots, t_n))^{\bot} :\equiv P_n^{\bot}(t_1, \dots, t_n)$, $(P_n^{\bot}(t_1, \dots, t_n))^{\bot} :\equiv P_n(t_1, \dots, t_n)$
		\item $(A \otimes B)^{\bot} :\equiv A^{\bot} \Par B^{\bot}$, $(A \Par B)^{\bot} :\equiv A^{\bot} \otimes B^{\bot}$, $(A \with B)^{\bot} :\equiv A^{\bot} \oplus B^{\bot}$, $(A \oplus B)^{\bot} :\equiv A^{\bot} \with B^{\bot}$,
		\item $(!A)^{\bot} :\equiv ?A^{\bot}$, $(?A)^{\bot} :\equiv !A^{\bot}$,
		\item $(\exists x A)^{\bot} :\equiv \forall x A^{\bot}$, $(\forall x A)^{\bot} :\equiv \exists x A^{\bot}$.
	\end{itemize}
	We abbreviate $A \lto B :\equiv A^{\bot} \Par B$ and introduce the axioms and rules of linear logic:
	
	\begin{itemize}
		
		\item Identity and structure:
		\begin{center}
			\begin{bprooftree}
				\Axiom$\fCenter A, A^{\bot}$
			\end{bprooftree}(id)
			\hskip 1.5em
			\begin{bprooftree}
				\Axiom$\fCenter \Gamma, A$
				\Axiom$\fCenter A^{\bot}, \Delta$
				\RightLabel{(cut)}
				\BinaryInf$\fCenter \Gamma, \Delta$
			\end{bprooftree}
			\hskip 1.5em
			\begin{bprooftree}
				\Axiom$\fCenter \Gamma$
				\RightLabel{(per)}
				\UnaryInf$\fCenter \Gamma'$
			\end{bprooftree}
		\end{center}
		where $\Gamma'$ is a permutation of $\Gamma$.
		
		\item Multiplicatives:
		
		\begin{center}
			\begin{bprooftree}
				\Axiom$\fCenter \Gamma, A$
				\Axiom$\fCenter \Delta, B$
				\RightLabel{($\otimes$)}
				\BinaryInf$\fCenter \Gamma, \Delta, A \otimes B$
			\end{bprooftree}
			\hskip 1.5em
			\begin{bprooftree}
				\Axiom$\fCenter \Gamma, A, B$
				\RightLabel{($\Par$)}
				\UnaryInf$\fCenter \Gamma, A \Par B$
			\end{bprooftree}
			\\
			\vskip 1.5em
			\begin{bprooftree}
				\Axiom$\fCenter 1$
			\end{bprooftree}($1$)
			\hskip 1.5em
			\begin{bprooftree}
				\Axiom$\fCenter \Gamma$
				\RightLabel{($\bot$)}
				\UnaryInf$\fCenter \Gamma, \bot$
			\end{bprooftree}
		\end{center}
		
		\item Additives:
		\begin{center}
			\begin{bprooftree}
				\Axiom$\fCenter \Gamma, A$
				\Axiom$\fCenter \Gamma, B$
				\RightLabel{($\with$)}
				\BinaryInf$\fCenter \Gamma, A \with B$
			\end{bprooftree}
			\hskip 1.5em
			\begin{bprooftree}
				\Axiom$\fCenter \Gamma, \top$
			\end{bprooftree}($\top$)
			\\
			\vskip 1.5em
			\begin{bprooftree}
				\Axiom$\fCenter \Gamma, A$
				\RightLabel{($\oplus_1$)}
				\UnaryInf$\fCenter \Gamma, A \oplus B$
			\end{bprooftree}
			\hskip 1.5em
			\begin{bprooftree}
				\Axiom$\fCenter \Gamma, B$
				\RightLabel{($\oplus_2$)}
				\UnaryInf$\fCenter \Gamma, A \oplus B$
			\end{bprooftree}
		\end{center}
		
		\item Modalities:
		\begin{center}
			\begin{bprooftree}
				\Axiom$\fCenter ?\Gamma, A$
				\RightLabel{($!$)}
				\UnaryInf$\fCenter ?\Gamma, !A$
			\end{bprooftree}
			\hskip 1.5em
			\begin{bprooftree}
				\Axiom$\fCenter \Gamma, A$
				\RightLabel{(d$?$)}
				\UnaryInf$\fCenter \Gamma, ?A$
			\end{bprooftree}
			\\
			\vskip 1.5em
			\begin{bprooftree}
				\Axiom$\fCenter \Gamma$
				\RightLabel{(w$?$)}
				\UnaryInf$\fCenter \Gamma, ?A$
			\end{bprooftree}
			\hskip 1.5em
			\begin{bprooftree}
				\Axiom$\fCenter \Gamma, ?A, ?A$
				\RightLabel{(c$?$)}
				\UnaryInf$\fCenter \Gamma, ?A$
			\end{bprooftree}
		\end{center}
		where (d$?$), (w$?$), and (c$?$) stand for ``dereliction'', ``weakening'', and ``contraction''.
		
		\item Quantifiers:
		\begin{center}
			\begin{bprooftree}
				\Axiom$\fCenter \Gamma, A$
				\RightLabel{($\forall$)}
				\UnaryInf$\fCenter \Gamma, \forall x A$
			\end{bprooftree}
			\hskip 1.5em
			\begin{bprooftree}
				\Axiom$\fCenter \Gamma, A[\nicefrac{t}{x}]$
				\RightLabel{($\exists$)}
				\UnaryInf$\fCenter \Gamma, \exists x A$
			\end{bprooftree}
		\end{center}
		where in ($\forall$) $x$ is not free in $\Gamma$, and in ($\exists$) $t$ is free for $x$ in $A$.
		
	\end{itemize}
\end{definition}

The following embedding of classical logic into linear logic is inspired by Girard's embedding in \cite{girard1987}.

\begin{definition}[Embedding of classical logic into linear logic]\label{Definition:Embedding}
	For any formula $A$ of classical logic, we define an embedding $\emL{A}$ into linear logic.
	\begin{alignat*}{3}
		&\emL{A} &&:\equiv A \text{ where $A$ is atomic.}\\
		\intertext{Let $\emL{A}$ and $\emL{B}$ already be defined for $A$ and $B$.}
		&\emL{(\lnot A)} &&:\equiv (\emL{A})^{\bot}\comma\\
		&\emL{(A \land B)} &&:\equiv \emL{A} \otimes \emL{B}\comma\\
		&\emL{(A \lor B)} &&:\equiv \emL{A} \Par \emL{B}\comma\\
		&\emL{(A \to B)} &&:\equiv \emL{A} \lto \emL{B}\comma\\
		&\emL{(\exists x^{\tau} A)} &&:\equiv \exists x^{\tau} \emL{A}\comma\\
		&\emL{(\forall x^{\tau} A)} &&:\equiv \forall x^{\tau} \emL{A}\period
	\end{alignat*}
\end{definition}

In the next step, we will integrate \emph{extensional Peano arithmetic in all finite types} $\EPAomega$ into this linear calculus. A definition and introduction to this system can be found in \cite[pp.~41--52]{kohlenbach2008}. We also use some term definitions, including the signum $\sg$ and maximum $\max$, as well as addition, terms for comparison, and a special subtraction (written as $x \minus y$) that returns zero instead in the case of negative results (cf. Definition 3.26 in \cite[p.~45]{kohlenbach2008}). Since classical linear logic is given by a sequent calculus, we will use a version of $\EPAomega$ that is in the form of a Gentzen-style sequent calculus. We will also often add the following axiom of quantifier-free choice (cf.~\cite[p.~53]{kohlenbach2008})
\begin{equation*}
	\QFACnil: \forall x^0 \exists y^0 \Aqf(x, y) \to \exists Y^1 \forall x^0 \Aqf(x, Yx)
\end{equation*}
where $\Aqf$ is quantifier-free.

\begin{definition}[$\ELPAomegal$]
	The language of \emph{extensional linear Peano arithmetic in all finite types with linear predicate} ($\ELPAomegal$) is defined like $\LL$, but we add some new predicates and rules. First, the terms of this language are those of $\EPAomega$. We also add its types and typed quantifiers. Moreover, we add equality for numbers and the \emph{linear predicate} $\lp$, i.e.~we add the predicates $s^0 =_0 {s'}^{0}$ and $\lp_{\tau}(t^{\tau})$ where both $s$ and $s'$ are terms of type $0$, and $t$ is a term of type $\tau$. We call a formula \emph{nonlinear} if and only if it does not contain $\oplus$, $\with$, or $\lp$.
	We may omit the type if we can infer it from the context. Higher order equality is, just like in $\EPAomega$, an abbreviation:
	\begin{equation*}
		s =_{\tau\rho} t :\equiv \forall x^{\rho} (sx =_{\tau} tx)
	\end{equation*}
	where $x$ does not occur in $s$ or $t$. Moreover, we write $s \neq_{\tau} t$ as abbreviation for $(s =_{\tau} t)^{\bot}$.
	
	Now, we use Definition \ref{Definition:Embedding} in order to transport all axioms of $\EPAomega$ to $\ELPAomegal$, i.e.~for each axiom $A$ of $\EPAomega$, we add $\emL{A}$ to $\ELPAomegal$. This includes axioms for equality, the successor term, projectors, combinators, recursors, and induction.
	
	Finally, we have some special axioms concerning the linear predicate:
	\begin{center}
		\begin{bprooftree}
			\Axiom$\fCenter \lp_{\tau}(t^{\tau})$
		\end{bprooftree}($\lp$)
		\hskip 0.7em
		\begin{bprooftree}
			\Axiom$\fCenter \Anl^{\bot}, !\Anl$
		\end{bprooftree}($!_2$)
		\hskip 0.7em
		\begin{bprooftree}
			\Axiom$\fCenter \Gamma, \lp^{\bot}_{\tau}(t), \lp^{\bot}_{\tau}(t)$
			\RightLabel{($\lp$-con)}
			\UnaryInf$\fCenter \Gamma, \lp^{\bot}_{\tau}(t)$
		\end{bprooftree}
		\\
		\vskip 1.5em
		\begin{bprooftree}
			\Axiom$\fCenter \lp_{\tau \rho}^{\bot}(t), \lp_{\rho}^{\bot}(r), \lp_{\tau}(tr)$
		\end{bprooftree}($\lp$-app)
		\vskip 1em
		\begin{bprooftree}
			\Axiom$\fCenter (\forall x^0 \exists y^0 \alpha x y =_0 0)^{\bot}, \exists Y^1 (\forall x^0 (\alpha x (Yx) =_0 0) \otimes !(\lp(\alpha) \lto \lp(Y)))$
		\end{bprooftree}($\lAC$)
	\end{center}
	where $t$ in ($\lp$) is a closed term, and $\Anl$ in ($!_2$) is nonlinear and may not contain $0$ or $\top$.
\end{definition}

\begin{remark}
	The axioms ($\lp$) and ($\lp$-app) are taken directly from those for the standard predicate in the nonstandard calculus defined by \cite{nonstandard}.
	The rule ($\lp$-con) allows us to contract the negated linear predicate. This enables us to use the linear predicate multiple times during an argument even if we only assume it once. The other direction does not hold: We are not allowed to forget any of its occurrences, in general.
	Similarly, (!$_2$) lifts all restrictions on weakening and contraction for nonlinear formulas. This makes it possible to copy any result from $\EPAomega$ directly to $\ELPAomegal$ via our embedding.
	
	Finally, ($\lAC$) basically is an axiom of choice for terms $\alpha^{0(0)(0)}$ that we use for the computation of witnesses, i.e.~if for every $x^0$ we can find a $y^0$ with $\alpha x y =_0 0$ then there exists a witness $Y$ outputting such a $y$ for every $x$ that we give to it. Additionally, if we can compute $\alpha$ and its underlying program is visible (this is a possible interpretation of what the linear predicate indicates), then we can compute $Y$ as well. This axiom allows the linear predicate to witness searches and therefore computability.
	
	For a practical use one might want to add extensionality for linear predicates, but such an axiom is not needed for the characterization result.
\end{remark}

\begin{lemma}\label{lem:substitute}
	Assume that we can prove $\vdash P^{\bot}, Q$ in $\ELPAomegal$ and let $A$ be a formula of the same language containing a placeholder $\$$ only positively, i.e.~after unraveling all abbreviations and involutions, $A$ does not contain the subformula $\$^{\bot}$. Then $\ELPAomegal$ proves $\vdash (A[\nicefrac{P}{\$}])^{\bot}, A[\nicefrac{Q}{\$}]$.
\end{lemma}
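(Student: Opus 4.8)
The plan is to prove this by structural induction on $A$, after first putting $A$ into negation normal form: unravel the abbreviations ($\lto$ and $\lnot$) and push every involution $(\cdot)^{\bot}$ down to the atoms. The hypothesis that $\$$ occurs only positively then says precisely that $A$ is built from atoms (and their duals $P_n^{\bot}$), the placeholder $\$$, the connectives $\otimes$, $\Par$, $\with$, $\oplus$, the modalities $!$, $?$, and the quantifiers $\forall$, $\exists$, with $\$$ never appearing as $\$^{\bot}$; since none of these connectives flips the polarity of its direct arguments, the induction stays inside the class of positive occurrences. The invariant I maintain at every node is the two-formula sequent $\vdash (A[\nicefrac{P}{\$}])^{\bot}, A[\nicefrac{Q}{\$}]$. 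There are two base cases. If $A = \$$, then $A[\nicefrac{P}{\$}] = P$ and $A[\nicefrac{Q}{\$}] = Q$, so the goal is exactly the hypothesis $\vdash P^{\bot}, Q$. If $\$$ does not occur in $A$, both substitutions leave $A$ unchanged and the goal $\vdash A^{\bot}, A$ is an instance of (id) followed by (per).

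For the inductive step I case on the principal connective, and for each immediate subformula $B$ I have $\vdash (B[\nicefrac{P}{\$}])^{\bot}, B[\nicefrac{Q}{\$}]$ available — by the induction hypothesis when $\$$ occurs in $B$, and by the identity base case otherwise — so all immediate subformulas are treated uniformly. Abbreviating $B', C'$ for the $P$-substitutions and $B'', C''$ for the $Q$-substitutions, the multiplicative case $A = B \otimes C$ combines the two premises with ($\otimes$) and then ($\Par$) to give $\vdash (B')^{\bot} \Par (C')^{\bot}, B'' \otimes C''$, which is the goal because $(B' \otimes C')^{\bot} \equiv (B')^{\bot} \Par (C')^{\bot}$; the case $A = B \Par C$ is the mirror image, $\otimes$-ing the two negated formulas and $\Par$-ing the positive side, and the additive cases $A = B \with C$ and $A = B \oplus C$ are obtained by introducing the dual connective via ($\oplus_1$)/($\oplus_2$) on each premise and combining them with ($\with$). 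For the modalities, the fact that the running sequent always has exactly two formulas is precisely what makes the promotion rule applicable: for $A = {!}B$ I apply (d$?$) to the negated premise and then ($!$) with the singleton $?$-context $?(B')^{\bot}$ to reach $\vdash {?}(B')^{\bot}, {!}B''$, and $A = {?}B$ is symmetric, applying (d$?$) to $B''$ and then ($!$) to promote $(B')^{\bot}$.

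The only genuinely delicate point is the quantifier case, where the side condition of the ($\forall$) rule appears to block the naive argument: for $A = \forall x B$ the induction hypothesis $\vdash (B')^{\bot}, B''$ has $x$ free in both formulas, so I cannot immediately universally quantify $B''$ while $(B')^{\bot}$ is still in the context. The fix is to apply the rules in the right order — first bind $x$ on the negated side with ($\exists$) (taking $t := x$), obtaining $\vdash \exists x\, (B')^{\bot}, B''$, and only then apply ($\forall$) to $B''$, whose side condition $x \notin \FV(\exists x\, (B')^{\bot})$ now holds because $x$ has become bound; this yields $\vdash (\forall x\, B')^{\bot}, \forall x\, B''$. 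The case $A = \exists x B$ is the mirror image: introduce $\exists x$ on the positive side first, then ($\forall$) on the negated side. Throughout I adopt the usual convention that the bound variables of $A$ are renamed fresh for the free variables of $P$ and $Q$, so that the capture-free substitutions behave as expected; with this convention the induction closes, and I expect this ordering subtlety in the quantifier step to be the main obstacle, the rest being routine rule-pushing.
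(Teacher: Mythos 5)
Your proposal is correct and follows essentially the same route as the paper's own proof: structural induction on $A$, with the base cases $\$$ and $\$$-free (atomic) formulas handled by the hypothesis and (id) respectively, multiplicatives by ($\otimes$) then ($\Par$), additives by ($\oplus_1$)/($\oplus_2$) then ($\with$), and the quantifier and modality cases resolved by the same rule-ordering trick — introducing ($\exists$) (resp.\ (d$?$)) first so that the side condition of ($\forall$) (resp.\ the $?$-context condition of ($!$)) is satisfied. The only cosmetic differences are your explicit negation-normal-form preprocessing, which is automatic in the paper's syntax since $(\cdot)^{\bot}$ is a defined abbreviation, and your explicit freshness convention for bound variables.
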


\begin{proof}
	We prove this by induction on the length of $A$.
	\begin{itemize}
		\item $\$$: This follows directly from $\vdash P^{\bot}, Q$.
		\item $A$ atomic: Here, we need to show $\vdash A^{\bot}, A$, which is an instance of (id).
	\end{itemize}
	Now, assume that we have already shown the result for $A$ and $B$, i.e.~$\vdash (A[\nicefrac{P}{\$}])^{\bot}, A[\nicefrac{Q}{\$}]$ and $\vdash (B[\nicefrac{P}{\$}])^{\bot}, B[\nicefrac{Q}{\$}]$ hold:
	\begin{itemize}
		\item $\exists x^{\tau} A$ or $\forall x^{\tau} A$: First, we apply ($\exists$) to $A[\nicefrac{Q}{\$}]$ (resp. to $(A[\nicefrac{P}{\$}])^{\bot}$) and then ($\forall$) to $(A[\nicefrac{P}{\$}])^{\bot}$ (resp. to $A[\nicefrac{Q}{\$}]$), which yields $\vdash (\exists / \forall x^{\tau} A[\nicefrac{P}{\$}])^{\bot}, \exists / \forall x^{\tau} A[\nicefrac{Q}{\$}]$.
		\item $!A$ or $?A$: First, we apply ($d?$) to $(A[\nicefrac{P}{\$}])^{\bot}$(resp. to $A[\nicefrac{Q}{\$}]$) and then ($!$) to $A[\nicefrac{Q}{\$}]$ (resp. to $(A[\nicefrac{P}{\$}])^{\bot}$), which yields $\vdash (!/? x^{\tau} A[\nicefrac{P}{\$}])^{\bot}, !/? x^{\tau} A[\nicefrac{Q}{\$}]$.
		\item $A \otimes B$ or $A \Par B$: First, we apply ($\otimes$) to $A[\nicefrac{Q}{\$}]$ and $B[\nicefrac{Q}{\$}]$ (resp. to $(A[\nicefrac{P}{\$}])^{\bot}$ and $(B[\nicefrac{P}{\$}])^{\bot}$) and then ($\Par$) to $(A[\nicefrac{P}{\$}])^{\bot}$ and $(B[\nicefrac{P}{\$}])^{\bot}$ (resp. to $A[\nicefrac{Q}{\$}]$ and $B[\nicefrac{Q}{\$}]$), which yields $\vdash ((A \otimes B / A \Par B)[\nicefrac{P}{\$}])^{\bot}, (A \otimes B / A \Par B)[\nicefrac{Q}{\$}]$.
		\item $A \with B$ or $A \oplus B$: First, we apply ($\oplus_1$) to $(A[\nicefrac{P}{\$}])^{\bot}$ and ($\oplus_2$) to $(B[\nicefrac{P}{\$}])^{\bot}$ (resp. ($\oplus_1$) to $A[\nicefrac{P}{\$}]$ and ($\oplus_2$) to $B[\nicefrac{P}{\$}]$), and then ($\with$) to $A[\nicefrac{Q}{\$}]$ and $B[\nicefrac{Q}{\$}]$ (resp. to $(A[\nicefrac{P}{\$}])^{\bot}$ and $(B[\nicefrac{P}{\$}])^{\bot}$), which yields $\vdash ((A \with B / A \oplus B)[\nicefrac{P}{\$}])^{\bot}, (A \with B / A \oplus B)[\nicefrac{Q}{\$}]$.
	\end{itemize}
\end{proof}

\begin{corollary}\label{cor:substituteTensorPar}
	Let $\$$ be an only positively occurring placeholder in $A$ like before. Assume that we can prove $\vdash B^{\bot}, 1$, then we can show:
	\begin{enumerate}[label=\arabic*)]
		\item $\vdash (A[\nicefrac{B \otimes C}{\$}])^{\bot}, A[\nicefrac{C}{\$}]$,
		\item $\vdash (A[\nicefrac{C}{\$}])^{\bot}, A[\nicefrac{B^{\bot} \Par C}{\$}]$.
	\end{enumerate}
	Assume that we can prove $\vdash B$, then we can show:
	\begin{enumerate}[resume, label=\arabic*)]
		\item $\vdash (A[\nicefrac{C}{\$}])^{\bot}, A[\nicefrac{B \otimes C}{\$}]$,
		\item $\vdash (A[\nicefrac{B^{\bot} \Par C}{\$}])^{\bot}, A[\nicefrac{C}{\$}]$.
	\end{enumerate}
	Similar statements hold for permutations, i.e. $C \otimes B$, etc.
\end{corollary}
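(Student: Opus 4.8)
The plan is to reduce all four claims to a single application of Lemma~\ref{lem:substitute}. For each item it suffices to exhibit a derivation of the corresponding ``seed'' sequent $\vdash P^{\bot}, Q$, where $P$ is the formula substituted for $\$$ on the left and $Q$ the one substituted on the right; Lemma~\ref{lem:substitute} then immediately upgrades this to $\vdash (A[\nicefrac{P}{\$}])^{\bot}, A[\nicefrac{Q}{\$}]$, which is exactly the desired conclusion, since $\$$ occurs only positively in $A$. Thus the entire content of the argument is to establish four short seed sequents from the hypotheses $\vdash B^{\bot}, 1$ (for items 1 and 2) and $\vdash B$ (for items 3 and 4), using only (id), (cut), ($\bot$), ($\otimes$), and ($\Par$).

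For items 1 and 2 I would first build the common core sequent $\vdash B^{\bot}, C^{\bot}, C$. Starting from the identity axiom $\vdash C^{\bot}, C$, one application of the ($\bot$) rule gives $\vdash C^{\bot}, C, \bot$. Since $1^{\bot} \equiv \bot$, cutting the hypothesis $\vdash B^{\bot}, 1$ against this sequent on the pair $1 / \bot$ eliminates the unit and yields $\vdash B^{\bot}, C^{\bot}, C$. For item 1, apply ($\Par$) to $B^{\bot}$ and $C^{\bot}$, obtaining $\vdash B^{\bot} \Par C^{\bot}, C$; since $(B \otimes C)^{\bot} \equiv B^{\bot} \Par C^{\bot}$, this is the seed sequent with $P \equiv B \otimes C$ and $Q \equiv C$. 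For item 2, instead apply ($\Par$) to $B^{\bot}$ and $C$, obtaining $\vdash C^{\bot}, B^{\bot} \Par C$, the seed sequent with $P \equiv C$ and $Q \equiv B^{\bot} \Par C$. Lemma~\ref{lem:substitute} then delivers both claims.

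For items 3 and 4, using the hypothesis $\vdash B$, I would combine it with the identity axiom $\vdash C^{\bot}, C$ via the ($\otimes$) rule. Pairing $B$ with $C$ yields $\vdash C^{\bot}, B \otimes C$, which is the seed sequent for item 3 ($P \equiv C$, $Q \equiv B \otimes C$); pairing $B$ with $C^{\bot}$ instead yields $\vdash B \otimes C^{\bot}, C$, and since $(B^{\bot} \Par C)^{\bot} \equiv B \otimes C^{\bot}$ this is the seed sequent for item 4 ($P \equiv B^{\bot} \Par C$, $Q \equiv C$). Again Lemma~\ref{lem:substitute} finishes. The stated variants for $C \otimes B$ and $C \Par B$ follow by identical arguments, inserting the permutation rule (per) so that the factors appear in the required order.

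The argument is essentially routine; the only point demanding a little care is the handling of the units in items 1 and 2, namely eliminating the constant $1$ from the hypothesis $\vdash B^{\bot}, 1$. The clean way to do this is to cut against a $\bot$ freshly introduced by the ($\bot$) rule, exploiting $1^{\bot} \equiv \bot$, rather than attempting to discard the unit directly. Everything else is a mechanical bookkeeping of involutions, after which each seed sequent is handed to Lemma~\ref{lem:substitute}.
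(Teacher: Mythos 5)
Your proposal is correct and follows essentially the same route as the paper: each item is reduced via Lemma~\ref{lem:substitute} to a seed sequent, with items 1 and 2 obtained by cutting $\vdash B^{\bot}, 1$ against a $(\bot)$-weakened identity axiom and applying $(\Par)$, and items 3 and 4 obtained by combining $\vdash B$ with an identity axiom via $(\otimes)$. The only difference is cosmetic: you spell out items 2 and 4 explicitly, where the paper dismisses them as ``almost identical.''
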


\begin{proof}
	We use Lemma \ref{lem:substitute}. For 1) we have to show $\vdash (B \otimes C)^{\bot}, C$:
	\begin{prooftree}
		\Axiom$\fCenter B^{\bot}, 1$
		\AxiomC{}
		\RightLabel{(id)}
		\UnaryInf$\fCenter C^{\bot}, C$
		\RightLabel{($\bot$)}
		\UnaryInf$\fCenter \bot, C^{\bot}, C$
		\RightLabel{(cut)}
		\BinaryInf$\fCenter B^{\bot}, C^{\bot}, C$
		\RightLabel{($\Par$)}
		\UnaryInf$\fCenter (B \otimes C)^{\bot}, C$
	\end{prooftree}
	The proof of $\vdash C^{\bot}, (B^{\bot} \Par C)$ for 2) is almost identical. For 3) we have to show $\vdash C^{\bot}, B \otimes C$:
	\begin{prooftree}
		\AxiomC{}
		\RightLabel{(id)}
		\UnaryInf$\fCenter C^{\bot}, C$
		\Axiom$\fCenter B$
		\RightLabel{($\otimes$)}
		\BinaryInf$\fCenter C^{\bot}, B \otimes C$
	\end{prooftree}
	The proof of $\vdash (B^{\bot} \Par C)^{\bot}, C$ for 4) is almost identical.
\end{proof}

\begin{lemma}\label{lem:embedPeanoIntoLinear}\mbox{}\\
	If we can prove $\Gamma \vdash \Delta$ in $\EPAomega + \QFACnil$, then we can also derive $\vdash (\emL{\Gamma})^{\bot}, \emL{\Delta}$ in $\ELPAomegal$.
\end{lemma}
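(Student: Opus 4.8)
The plan is to induct on the derivation of $\Gamma \vdash \Delta$ in the Gentzen-style presentation of $\EPAomega + \QFACnil$, translating each two-sided sequent into the one-sided linear sequent $\vdash (\emL{\Gamma})^{\bot}, \emL{\Delta}$ and replaying every rule inside $\ELPAomegal$. The decisive preliminary observation is that every formula in the image of $\emL{\cdot}$ is \emph{nonlinear}: the embedding produces only atoms, $\otimes$, $\Par$, $\lto$, and quantifiers, and never $\oplus$, $\with$, or $\lp$; the same remains true after applying $(\cdot)^{\bot}$. I would therefore first record, as the engine of the whole argument, that nonlinear formulas may be weakened and contracted. For nonlinear $N$ both directions of $N \equiv {?}N$ are available: the direction $\vdash N^{\bot}, {?}N$ is dereliction (d$?$) applied to (id), while the converse $\vdash {!}N^{\bot}, N$ is precisely ($!_2$) instantiated at $N^{\bot}$. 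Hence a (w$?$) introducing ${?}N$ followed by a cut against ($!_2$) simulates weakening of $N$, and converting both copies to ${?}N$ by dereliction, applying (c$?$), and converting back by a cut against ($!_2$) simulates contraction of $N$.

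With weakening and contraction for nonlinear formulas available, the logical cases become routine. Exchange is (per), and the classical cut and axiom rules translate to (cut) and (id). The context-sharing binary rules — right-$\land$ passing from $\vdash (\emL{\Gamma})^{\bot}, \emL{\Delta}, \emL{A}$ and $\vdash (\emL{\Gamma})^{\bot}, \emL{\Delta}, \emL{B}$ to $\vdash (\emL{\Gamma})^{\bot}, \emL{\Delta}, \emL{A} \otimes \emL{B}$, and dually left-$\lor$ and left-$\to$ — are handled by the multiplicative rule ($\otimes$), which duplicates the shared context $(\emL{\Gamma})^{\bot}, \emL{\Delta}$, followed by repeated contraction to merge the two copies. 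The rules introducing a connective on a single side — left-$\land$, right-$\lor$, right-$\to$ — are handled by weakening in the missing immediate subformula (again nonlinear) and then applying ($\Par$). The quantifier rules map directly onto the linear ($\forall$) and ($\exists$) rules, the embedding commuting with the typed quantifiers and the classical eigenvariable and witness side-conditions matching after passing through the duality $(\forall x\, \emL{A})^{\bot} = \exists x\, \emL{A}^{\bot}$.

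It remains to treat the nonlogical leaves. By definition of $\ELPAomegal$, each axiom $A$ of $\EPAomega$ contributes its embedding $\emL{A}$ as an initial sequent, so those leaves translate verbatim. The only genuinely new work is $\QFACnil$, whose embedding I would derive from the linear choice axiom ($\lAC$). Since $\Aqf$ is quantifier-free, hence decidable in $\EPAomega$, I fix a term $t$ with $\EPAomega \vdash \Aqf(x,y) \leftrightarrow t x y =_0 0$ and instantiate the free $\alpha$ in ($\lAC$) by $t$; using the embedded equivalence together with Lemma~\ref{lem:substitute} I then replace $t x y =_0 0$ by $\emL{\Aqf}(x,y)$ in both polarities. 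Finally the spurious conjunct ${!}(\lp(\alpha) \lto \lp(Y))$ is discarded by Corollary~\ref{cor:substituteTensorPar}(1), whose hypothesis $\vdash ({!}(\lp(\alpha) \lto \lp(Y)))^{\bot}, \one$ follows from ($\one$) by a single (w$?$). This produces exactly $\vdash (\forall x \exists y\, \emL{\Aqf}(x,y))^{\bot}, \exists Y \forall x\, \emL{\Aqf}(x, Yx)$, that is, $\vdash \emL{\QFACnil}$.

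The main obstacle is the clash between the context-sharing, additive-style behaviour of the classical sequent rules and the context-splitting, multiplicative connectives selected by the embedding: the translation succeeds only because the weakening/contraction lemma for nonlinear formulas — powered entirely by ($!_2$) — lets one freely duplicate and then merge the shared contexts that ($\otimes$) would otherwise forbid. A secondary, more computational difficulty is forcing the general quantifier-free matrix of $\QFACnil$ into the rigid $\alpha x y =_0 0$ shape demanded by ($\lAC$) and safely deleting its linear-predicate conjunct, which is exactly the bookkeeping that Lemma~\ref{lem:substitute} and Corollary~\ref{cor:substituteTensorPar} are there to perform.
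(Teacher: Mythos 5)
Your proposal is correct and follows essentially the same route as the paper: induction on the derivation, with weakening and contraction of embedded (nonlinear) formulas simulated via (w$?$), (d$?$), (c$?$) and cuts against ($!_2$), and with $\QFACnil$ obtained by reducing the quantifier-free matrix to a term equation and then stripping the conjunct ${!}(\lp(\alpha) \lto \lp(Y))$ from ($\lAC$) via Corollary~\ref{cor:substituteTensorPar}, whose hypothesis comes from ($\one$) and (w$?$). The only cosmetic difference is that you perform the $\Aqf \leftrightarrow txy =_0 0$ replacement inside $\ELPAomegal$ via Lemma~\ref{lem:substitute}, whereas the paper invokes the term-representability of quantifier-free formulas already on the $\EPAomega$ side.
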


\begin{proof}
	The proof works by induction on the length of the derivation and is very straightforward. The induction starts with the axioms $A$ of $\EPAomega + \Gamma$, whose embeddings $\emL{A}$ are, by definition, axioms in $\ELPAomegal + \emL{\Gamma}$. The most interesting part are the contraction and weakening rules, which we will now show in detail. We start with the step for the (left-)contraction of $\Gamma, A, A \vdash \Delta$:
	
	\begin{prooftree}
		\Axiom$\fCenter (\emL{\Gamma})^{\bot}, (\emL{A})^{\bot}, (\emL{A})^{\bot}, \emL{\Delta}$
		\RightLabel{(d?)}
		\UnaryInf$\fCenter (\emL{\Gamma})^{\bot}, ?(\emL{A})^{\bot}, ?(\emL{A})^{\bot}, \emL{\Delta}$
		\RightLabel{(c?)}
		\UnaryInf$\fCenter (\emL{\Gamma})^{\bot}, ?(\emL{A})^{\bot}, \emL{\Delta}$
		\AxiomC{}
		\RightLabel{($!_2$*)}
		\UnaryInf$\fCenter (\emL{A})^{\bot}, !\emL{A}$
		\RightLabel{(cut)}
		\BinaryInf$\fCenter (\emL{\Gamma})^{\bot}, (\emL{A})^{\bot}, \emL{\Delta}$
	\end{prooftree}
	where we used in ($!_2$*) that our embedding maps to nonlinear formulas. The result is the embedded version of $\Gamma, A \vdash \Delta$. A similar proof works for right-contraction. Now, we take a look at the (left-)weakening of $\Gamma \vdash \Delta$:
	\begin{prooftree}
		\Axiom$\fCenter (\emL{\Gamma})^{\bot}, \emL{\Delta}$
		\RightLabel{(w?)}
		\UnaryInf$\fCenter (\emL{\Gamma})^{\bot}, ?(\emL{A})^{\bot}, \emL{\Delta}$
		\AxiomC{}
		\RightLabel{($!_2$)}
		\UnaryInf$\fCenter (\emL{A})^{\bot}, !\emL{A}$
		\RightLabel{(cut)}
		\BinaryInf$\fCenter (\emL{\Gamma})^{\bot}, (\emL{A})^{\bot}, \emL{\Delta}$
	\end{prooftree}
	The result is the embedded version of $\Gamma, A \vdash \Delta$. An analogous proof works for right-weakening. We will use similar reasoning for applying contraction and weakening to nonlinear formulas throughout this article.
	
	Finally, for $\QFACnil$, we know that quantifier-free formulas can be expressed using terms in $\EPAomega$ and that this fact can be proven without using the quantifier-free axiom itself (cf.~Proposition 3.17 \cite[p.~50]{kohlenbach2008}). Therefore, we only have to build the axiom of choice for terms. We yield such an axiom
	\begin{equation*}
		\fCenter (\forall x^0 \exists y^0 \alpha x y =_0 0)^{\bot}, \exists Y^1 \forall x^0 (\alpha x (Yx) =_0 0)
	\end{equation*}
	if we combine $\vdash (!(\lp(\alpha) \lto \lp(Y)))^{\bot}, 1$, which can be proven using ($1$) and (w?), with Corollary \ref{cor:substituteTensorPar} and ($\lAC$).
\end{proof}

The following system will only be used internally and is not needed for understanding the result itself:
\begin{definition}[$\ELPAomegaeq$]
	The language of $\ELPAomegaeq$ is defined like that of $\ELPAomegal$, but we add the predicates $s^0 \doteq_0 t^0$ and $\place(t^0)$ for terms $s$ and $t$ of type $0$, and read $s \doteq_{\tau\rho} t$ as abbreviation for $\forall x^{\rho} (sx \doteq_{\tau} tx)$. Also, we write $s \not\doteq_0 t$ as abbreviation for $(s \doteq_0 t)^{\bot}$.
	Moreover, we further restrict (!$_2$) to formulas that do contain neither of our new predicates and add the following rules
	\begin{center}
		\begin{bprooftree}
			\Axiom$\fCenter !Sx \not\doteq_0 0$
		\end{bprooftree}(dot-succ)\\
		\vskip 1.5em
		\begin{bprooftree}
			\Axiom$\fCenter s =_{\tau} t$
			\RightLabel{(dot-eq$_1$)}
			\UnaryInf$\fCenter !s \doteq_{\tau} t$
		\end{bprooftree}
		\hskip 1.5em
		\begin{bprooftree}
			\Axiom$\fCenter (!k \not\doteq_0 0)^{\bot}, !\sg(k) \doteq_0 1$
		\end{bprooftree}(dot-eq$_2$)
		\\
		\vskip 1.5em
		\begin{bprooftree}
			\Axiom$\fCenter (!x \doteq_{\tau} y)^{\bot}, A^{\bot}[\nicefrac{x}{z}], A[\nicefrac{y}{z}]$
		\end{bprooftree}(dot-sub)
		\vskip 1.5em
		\begin{bprooftree}
			\Axiom$\fCenter \place(\sg(0))$
		\end{bprooftree}(tag-0)
		\hskip 1em
		\begin{bprooftree}
			\Axiom$\fCenter \Gamma, \place^{\bot}(t), \place^{\bot}(t)$
			\RightLabel{(tag-con)}
			\UnaryInf$\fCenter \Gamma, \place^{\bot}(t)$
		\end{bprooftree}
		\vskip 1em
		\begin{bprooftree}
			\Axiom$\fCenter \place^{\bot}(\sg(x)), \place^{\bot}(\sg(y)), \place(\sg(x + y))$
		\end{bprooftree}(tag-app)
	\end{center}\vskip 1em
	where $x$ and $y$ are free for $z$ in $A$.
\end{definition}

\begin{remark}
	Since we want to have a substitution axiom for the tagging predicate, we need to introduce the stronger dot-equality: If we allowed the normal equality to substitute values within tags, it would not be possible to define appropriate phase semantics in Section \ref{sec:Phase}.
\end{remark}

\begin{corollary}
	If a sequent can be proven in $\ELPAomegal$, then it can also be proven in $\ELPAomegaeq$.
\end{corollary}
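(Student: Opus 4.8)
The plan is to show that $\ELPAomegaeq$ is, for the purposes of provability, an extension of $\ELPAomegal$, so that a given $\ELPAomegal$-derivation can be replayed line for line in $\ELPAomegaeq$. Concretely, I would argue by induction on the length of the derivation in $\ELPAomegal$ that each sequent it produces is also derivable in $\ELPAomegaeq$. The first observation is that the language of $\ELPAomegal$ embeds into that of $\ELPAomegaeq$: the latter only adds the predicates $s^0 \doteq_0 t^0$ and $\place(t^0)$, neither of which occurs in any $\ELPAomegal$-formula. Hence every formula and every sequent arising in the source derivation already belongs to $\ELPAomegaeq$.

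Next I would check the axioms and rules one by one. The identity and structural rules, the multiplicatives, additives, modalities, and quantifier rules of $\LL$ are common to both calculi, and so are the special axioms ($\lp$), ($\lp$-con), ($\lp$-app), ($\lAC$), together with the embedded $\EPAomega$-axioms $\emL{A}$; each such step of the $\ELPAomegal$-derivation is therefore verbatim a legal step of $\ELPAomegaeq$. Since $\ELPAomegaeq$ in addition has the new rules (dot-succ), (dot-eq$_1$), (dot-eq$_2$), (dot-sub), (tag-0), (tag-con), and (tag-app), having more inference rules available never obstructs replaying the derivation.

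The only point of genuine difference, and hence the only place the induction could fail, is the axiom ($!_2$): in $\ELPAomegaeq$ it is restricted to nonlinear formulas containing neither $\doteq$ nor $\place$, whereas in $\ELPAomegal$ it applies to every nonlinear formula. The key fact---and essentially the whole content of the proof---is that this restriction is vacuous for any instance occurring in the given derivation, because the formula $\Anl$ used there is an $\ELPAomegal$-formula and so contains none of the new predicates. Thus the restricted ($!_2$) of $\ELPAomegaeq$ licenses exactly the same instances, the problematic step carries over, and the entire $\ELPAomegal$-derivation is already an $\ELPAomegaeq$-derivation. I do not expect a real obstacle here; the single subtlety is confirming that the narrowed side condition on ($!_2$) never rules out an instance that is actually used, which is immediate since $\doteq$ and $\place$ live outside the source language.
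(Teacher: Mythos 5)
Your proposal is correct and matches the paper's own proof: the paper likewise observes that $\ELPAomegaeq$ extends $\ELPAomegal$ and that the restriction on ($!_2$) is harmless precisely because dot-equality and the tagging predicate do not exist in the language of $\ELPAomegal$. Your version merely spells out the routine induction that the paper leaves implicit.
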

\begin{proof}
	$\ELPAomegaeq$ is an extension of $\ELPAomegal$. While we further restrict (!$_2$) to formulas that do not contain dot-equality or the tagging predicate, this is not a problem because both predicates do not exist in $\ELPAomegal$.
\end{proof}

\begin{lemma}\label{lem:embedLinearIntoPeano}
	If we can prove a sequent $\vdash \Delta$ in $\ELPAomegaeq + \emL{\Gamma}$ where $\Gamma$ are formulas of $\EPAomega$, then we can also prove the sequent $\vdash \Delta'$ in $\EPAomega + \QFACnil + \Gamma$ where $\Delta'$ results from replacing $1$, $\top$, $\lp(t^{\tau})$, and $\place(t^0)$ by $0 =_0 0$, replacing $0$ and $\bot$ by $1 =_0 0$, replacing $\otimes$ and $\with$ by $\land$, and replacing $\Par$ and $\oplus$ by $\lor$. Moreover, we substitute $\doteq$ with the regular $=$, the linear negation $(.)^{\bot}$ by the classical $\lnot$, and omit any occurrences of the modalities $!$ or $?$.
\end{lemma}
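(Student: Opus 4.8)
The plan is to argue by induction on the length of the given derivation in $\ELPAomegaeq + \emL{\Gamma}$, translating each axiom and each rule into a derivation in $\EPAomega + \QFACnil + \Gamma$. Throughout I read a one-sided linear sequent $\vdash A_1, \dots, A_n$ as the classical succedent sequent $\vdash A_1', \dots, A_n'$, i.e.~as a proof of $\bigvee_i A_i'$; this is consistent with the prescription that the $\Par$-like comma be sent to classical $\lor$. Since the target calculus is a two-sided Gentzen system, these are sequents with empty antecedent, and the structural rules (weakening, contraction, permutation) are available on the right.

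Before the induction I would isolate two facts about the translation $(\cdot)'$, each proven by a side induction on formulas. First, $(\cdot)'$ commutes with substitution, $(A[t/x])' \equiv A'[t/x]$, which is immediate since the translation rewrites only connectives, predicates, and modalities and never touches terms. Second, and more delicately, $(A^{\bot})'$ is provably equivalent in classical logic to $\lnot A'$. This is the crucial bookkeeping step: the involution $(\cdot)^{\bot}$ is an abbreviation that drives negation to the atoms through the linear De Morgan laws, so I must check that collapsing $\otimes, \with \mapsto \land$ and $\Par, \oplus \mapsto \lor$ and dropping $!, ?$ is compatible with pushing $\lnot$ inward classically. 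For instance, if $A = B \otimes C$ then $(A^{\bot})' = (B^{\bot})' \lor (C^{\bot})' \equiv \lnot B' \lor \lnot C'$, which classical logic identifies with $\lnot(B' \land C') = \lnot A'$; the additive, modal, quantifier, and unit cases are analogous, with $\one^{\bot} = \bot$ sending the true atom $0=0$ to the refutable atom $1=_0 0$. This is exactly where classicality of the target is used, namely in De Morgan for $\land$ and in the step $\lnot\forall \mapsto \exists\lnot$.

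With these facts the induction is a rule-by-rule verification. The identity axiom $\vdash A, A^{\bot}$ and the axiom $(!_2)$, $\vdash \Anl^{\bot}, !\Anl$, become instances of the excluded middle $\vdash A' \lor \lnot A'$ via the negation fact; (cut) becomes classical cut and (per) is absorbed by treating succedents as multisets. The multiplicatives, additives, quantifiers, and modalities all collapse to standard classical inferences: $(\otimes)$ and $(\with)$ to right $\land$-introduction (the former with auxiliary weakening), $(\Par)$ and $(\oplus_i)$ to right $\lor$-introduction, $(\forall)$ and $(\exists)$ to the corresponding quantifier rules (the latter using substitution-commutation), while $(!)$ and $(d?)$ become the identity and $(w?)$, $(c?)$ become right weakening and contraction. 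The units and the $\lp$-, tag-, and dot-axioms translate to arithmetical trivialities or structural rules: $(\one)$, $(\top)$, $(\lp)$, (tag-0), and (tag-app) reduce to reflexivity $0=0$ (the last using that a disjunction with a provable disjunct is provable); (dot-succ) becomes the successor axiom $Sx \neq_0 0$; (dot-eq$_2$) and (dot-sub) become, respectively, a defining property of $\sg$ and the equality-substitution (Leibniz) axiom; (dot-eq$_1$) collapses to the identity since $\doteq$ and $=$ have the same translation; and $(\bot)$, ($\lp$-con), (tag-con) become right weakening and contraction.

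Two cases carry the actual content. For the embedded axioms of $\EPAomega$ (and the formulas of $\Gamma$) occurring as leaves $\vdash \emL{A}$, I would show by induction on $A$ that $(\emL{A})' \equiv A$ provably in classical logic, using the negation fact for the clauses $\emL{(\lnot A)}$ and $\emL{(A \to B)}$ of the embedding; hence each such leaf translates to a classical equivalent of an axiom of $\EPAomega + \QFACnil + \Gamma$. Finally, $(\lAC)$ is where $\QFACnil$ is genuinely needed: dropping the $!$ and translating $\lp(\alpha) \lto \lp(Y)$ to the provable $\lnot(0=0) \lor (0=0)$, the axiom collapses to $\forall x \exists y (\alpha x y =_0 0) \to \exists Y \forall x (\alpha x (Yx) =_0 0)$, precisely the term instance of $\QFACnil$ that was singled out in the proof of Lemma \ref{lem:embedPeanoIntoLinear} and that is available in the target. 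I expect this last case, together with establishing the negation-normal-form equivalence $(A^{\bot})' \equiv \lnot A'$, to be the only steps requiring real thought; the remaining cases are a mechanical collapse onto the classical rules.
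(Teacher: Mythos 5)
Your proposal is correct and takes essentially the same route as the paper: the paper's own proof is only a brief sketch appealing to the fact that linear logic refines classical logic, and it singles out exactly the cases you identify as carrying content --- the transported $\EPAomega$ axioms, the $\lp$-, tag-, and dot-rules collapsing to tautologies or standard equality facts, and ($\lAC$) translating to an instance of $\QFACnil$. Your rule-by-rule induction, together with the bookkeeping lemma that $(A^{\bot})'$ is classically equivalent to $\lnot A'$, simply fills in the details the paper omits.
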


\begin{proof}
	We do not give the detailed the proof since it is known that linear logic is a refinement of classical logic. The axioms we took from $\EPAomega$ to $\ELPAomegal$ can easily be transported back with these translations. A similar argument works for $\emL{\Gamma}$ and $\Gamma$. The axioms and rules for the linear and tagging predicate, except for ($\lAC$), hold trivially as their substitutions are tautologies. The axioms and rules for the dot-equality also hold true for equality in extensional Peano arithmetic. Finally, the translation of ($\lAC$) can be proven using $\QFACnil$.
\end{proof}

\begin{definition}[Affine logic]
	If we, instead of (w?), add the following rule of weakening to the system $\ELPAomegal$ or $\ELPAomegaeq$
	\begin{center}
		\begin{bprooftree}
			\Axiom$\fCenter \Gamma$
			\RightLabel{(w),}
			\UnaryInf$\fCenter \Gamma, A$
		\end{bprooftree}
	\end{center}
	we call the resulting \emph{affine} calculus $\EAPAomegal$ or $\EAPAomegaeq$.
\end{definition}

\begin{remark}
	The results of Lemma \ref{lem:substitute} and Corollary \ref{cor:substituteTensorPar} also work for $\ELPAomegaeq$, $\EAPAomegal$, and $\EAPAomegaeq$.
\end{remark}

\section{Computing in Higher Types}

Since Weihrauch reductions consist of computable functions, we have to implement computability into our functional interpretation: We do this by recursively defining a relation on arbitrary types that lets us represent higher-order computations using the terms of $\EPAomega$. Finally, we will show that for low types these terms are equivalent to \emph{associates} (cf.~\cite{kleene}, \cite{kreisel}, and 2.6 in \cite{troelstra1973}) that we use in our formalization of Weihrauch reducibility.

\subsection*{Pairs and sequences}

The following definitions and results about pairs and sequences are taken from \cite{troelstra1973} and \cite{kohlenbach2008}:

\begin{definition}[Pairs]
	We define $j^{0(0)(0)}$, $j_1^{0(0)}$, and $j_2^{0(0)}$ (cf. \cite[p.~23]{troelstra1973}) to be terms such that we can prove the following in $\EPAomega$:
	\begin{itemize}
		\item $\fCenter j_1 (j x^0 y^0) =_0 x$,
		\item $\fCenter j_2 (j x^0 y^0) =_0 y$,
		\item $\fCenter j x^0 y^0 \geq_0 y$.
	\end{itemize}
	Let $x$, $y$, and $z$ be variables of type $0$, we may write $j(x, y)$ as a more readable variant of $jxy$. Moreover, we will use $z_l$ and $z_r$ as shorthands for $j_1 z$ and $j_2 z$, respectively.
\end{definition}

\begin{definition}[Finite sequences]	
	We can code finite sequences in $\EPAomega$ (cf. \cite[p.~24]{troelstra1973}). We write
	\begin{equation*}
	x := \langle x_1, \dots, x_n \rangle
	\end{equation*}
	for a finite sequence $x$ of length $n$.  
	
	\begin{itemize}
		\item We define $\lth^{0(0)}$ to be a term such that $\lth x$ tells us the length of arbitrary sequences $x$.
		\item We may write $x_k$ for the $k$-th element of $x$ starting at $1$.
		\item We can concatenate two finite sequences $x^0$ and $y^0$, or one finite sequence $x^0$ and one infinite sequence $y^1$ using $x * y$.
		\item Finally, we abbreviate $\hat{x} :\equiv \langle x \rangle$.
	\end{itemize}
\end{definition}

\begin{definition}[Initial segments]
	Let $x^1$ be an infinite sequence, and $n^0$ a natural number. We define the abbreviations (cf. \cite[p.~202]{kohlenbach2008})
	\begin{flalign*}
	&&\overline{x}n &:= \langle x0, \dots, x(n-1) \rangle&\\
	\text{and } &&\zerosAfter{x}{n} &:= \lambda k^0. \left\{
	\begin{aligned}
	xk\phantom{0} &\text{ if $k <_0 n$,}\\
	0\phantom{xk} &\text{otherwise.}
	\end{aligned}\right.&
	\end{flalign*}
\end{definition}

\begin{definition}[Hereditarily computable types]
	For every type $\tau$ we define the hereditarily computable type $\comp{\tau}$\index{$\comp{\tau}$} inductively:
	\begin{align*}
	\comp{0} &:\equiv 1\comma\\
	\comp{(\tau \rho)} &:\equiv \comp{\tau}\comp{\rho}\period
	\end{align*}
\end{definition}

\begin{definition}[Constructing terms]
	For every type $\tau$ and pair of terms $s^{\comp{\tau}}$ and $t^\tau$, we define in $\EPAomega$ by induction what it means for $s$ to \emph{construct} $t$, which we write as $\con_{\tau}(s,t)$\index{c@$\con_{\tau}$}:
	\begin{align*}
	&\con_0 (s^1, t^0) &&:\equiv \exists x^0 (sx \neq_0 0) \land \forall x^0 (sx \neq_0 0 \to sx =_0 t + 1)\comma\\
	&\con_{\tau\rho} (s^{\comp{\tau\rho}}, t^{\tau\rho}) &&:\equiv \forall x^{\comp{\rho}}, y^\rho (\con_{\rho} (x,y) \to \con_{\tau} (sx, ty))\period
	\end{align*}
	We simply write $\con(s,t)$ if the type is clear from context.
\end{definition}

\begin{definition}[Constructing terms for small types]
	Let $s^0$ and $t^1$ be terms. We define
	\begin{flalign*}\index{$\tilde{x}$}
	&&\tilde{s} &:\equiv \lambda k^0. s + 1&\\
	\text{and }&&\tilde{t} &:\equiv \lambda z^1, k^0. \left\{
	\begin{aligned}
	t(zk \minus 1) + 1\phantom{0} &\text{if $zk \neq_0 0$,}\\
	0\phantom{t(zk \minus 1) + 1} &\text{otherwise.}
	\end{aligned}\right.
	\end{flalign*}
	The definition therefore depends on the type of the term.
\end{definition}

\begin{lemma}\label{closedTermsAreCp}
	For every term $t^\tau$ where all free variables are of type $0$ or $1$, there exists a term $s^{\comp{\tau}}$ with the same free variables such that $\EPAomega$ proves $\con(s, t)$. Additionally, $\con(\tilde{s}, s)$ and $\con(\tilde{t}, t)$ hold for terms $s^0$ and $t^1$, respectively.
\end{lemma}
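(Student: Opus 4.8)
The plan is to induct on the structure of the term $t$. Since the terms of $\EPAomega$ are generated from variables and the constants $0$, $S$, the combinators $\Pi$, $\Sigma$, and the recursors $R_{\rho}$ by application, these are exactly the cases to treat. The single fact that drives the whole argument is that $\con$ is \emph{closed under application}: unfolding the definition of $\con_{\tau\rho}$ shows at once that $\con_{\tau\rho}(s_1, t_1)$ together with $\con_{\rho}(s_2, t_2)$ yields $\con_{\tau}(s_1 s_2, t_1 t_2)$, where $s_1 s_2$ has type $\comp{\tau}$ and its free variables are precisely those of $t_1 t_2$. This immediately settles the application step $t \equiv t_1 t_2$ from the two induction hypotheses, and it is the only place where the inductive hypothesis is consumed.

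For the base cases I would first record the \textbf{Additionally} claims, which also furnish the variable cases. For $s^0$ we have $\tilde{s}x =_0 s+1 \neq_0 0$ for every $x$, so both conjuncts of $\con_0(\tilde{s}, s)$ hold trivially. For $t^1$ and any $x^1$ with $\con_0(x, y)$, the value $\tilde{t}\,x\,k$ is nonzero exactly when $xk \neq_0 0$, in which case $xk =_0 y+1$ and hence $\tilde{t}\,x\,k =_0 t(xk \minus 1) + 1 =_0 ty + 1$; choosing a $k$ with $xk \neq_0 0$ (one exists by $\con_0(x,y)$) gives the existence conjunct, so $\con_1(\tilde{t}, t)$. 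Consequently a free variable of type $0$ or $1$ is represented by $\tilde{(\cdot)}$; the constant $0$ by $\tilde{0} = \lambda k.\, 1$; and the successor $S$ by $\lambda z, k.\,(\,zk \neq_0 0$ then $zk + 1$ else $0\,)$, verified exactly as for $\tilde{t}$. The combinators are handled by reusing the \emph{same} combinator one type level up: for $\Pi$ take $\Pi_{\comp{\rho}, \comp{\tau}}$, so that $\con(\Pi_{\comp{\rho},\comp{\tau}}\,a\,b, \Pi x y)$ collapses to the hypothesis $\con(a, x)$; for $\Sigma$ take $\Sigma_{\comp{\delta},\comp{\rho},\comp{\tau}}$, so that $\con(ac(bc), xz(yz))$ follows by three applications of closure under application to $\con(a,x)$, $\con(b,y)$, $\con(c,z)$. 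Both representing terms are closed, matching the empty free-variable set.

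The genuinely delicate case, and where I expect the main work to lie, is the recursor $R_{\rho}$. A representation receives constructions $a$ of the base value $a_0$, $b$ of the step functional $b_0$, and $c$ of the recursion number $n$, and must output a construction of $R\,a_0\,b_0\,n$ \emph{without being able to read $n$ off from $c$}, since recovering $n$ from its construction is an unbounded search and hence not a term of $\EPAomega$. The idea is to separate the two difficulties. First, the recursion itself is carried out at the computable types by the genuine recursor $R_{\comp{\rho}}$: setting $V\,k$ to be $R_{\comp{\rho}}$ evaluated at $k$ with base $a$ and step functional $\lambda v^{\comp{\rho}}, m^{0}.\, b\,v\,\tilde{m}$, one proves by induction on $k$ that $\con_{\rho}(V\,k,\, R\,a_0\,b_0\,k)$, using that $\tilde{m}$ constructs $m$ and that $b$ constructs $b_0$. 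Second, the search for $n$ is folded into the type-$0$ leaf of the output: feeding the representation constructions $\vec{x}$ of the arguments of $\rho$ yields an object of type $\comp{0} = 1$, into which a search index $p$ is fed, and I define that leaf value to be $V(c(p_l) \minus 1)\,\vec{x}\,p_r$ when $c(p_l) \neq_0 0$, and $0$ otherwise (here $p_l = j_1 p$ and $p_r = j_2 p$). Correctness of $\con_{\rho}$ then splits as usual: any nonzero output forces $c(p_l) \neq_0 0$, whence $c(p_l) =_0 n+1$ by $\con_0(c, n)$, so the value comes from $V\,n$ and is correct by the first step; and for existence one dovetails, taking $p := j\,q_1\,q_2$ where $c(q_1) \neq_0 0$ (from $\con_0(c,n)$) and $V\,n\,\vec{x}\,q_2 \neq_0 0$ (from $\con_{\rho}(V\,n, R\,a_0\,b_0\,n)$ applied to $\vec{x}$), using $(j\,q_1\,q_2)_l =_0 q_1$ and $(j\,q_1\,q_2)_r =_0 q_2$.

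The hard part is thus entirely the recursor: arranging the dovetail between the uncomputable search for $n$ inside $c$ and the search inside the already-built value $V\,n$ so that it lives at the single type-$0$ leaf, keeping the representing term within $\EPAomega$ and its correctness provable by ordinary induction. Everything else is routine. In particular no appeal to $\QFACnil$ is required: every representing term is given explicitly, and its defining property is checked either directly or by induction in $\EPAomega$ alone, while the free-variable bookkeeping is immediate in each case because representations are built by the same syntactic operations as the terms they represent.
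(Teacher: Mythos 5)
Your proposal is correct and follows essentially the same route as the paper's proof: the $\tilde{(\cdot)}$ constructions for type-$0$/$1$ terms, reuse of $\Pi$ and $\Sigma$ at the hereditarily computable types, the recursor handled by first recursing at computable types while feeding the step term $\tilde{m}$ in place of $m$, and then dovetailing the search for the recursion number with the leaf search via the pairing $j_1$, $j_2$, with application closing the induction hereditarily. The only differences are cosmetic (you phrase the auxiliary recursion directly via $R_{\comp{\rho}}$ with a modified step functional, where the paper introduces the terms $T_i$ by their defining equations, and the paper treats simultaneous recursors in tuples).
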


\begin{remark}
	In the following proof and later in the paper, we may write tuples of terms $t_1, \dots, t_n$ in bold: $\tupl{t}$. Furthermore, the application of two tuples $\tupl{s}$ and $\tupl{t}$ is defined as follows: $\tupl{st} :\equiv \tupl{s}_1 \tupl{t}, \dots, \tupl{s}_m \tupl{t}$ where $m$ is the length of $\tupl{s}$.
\end{remark}

\begin{proof}
	We show this by induction on the length of $t$:
	\begin{itemize}
		\item $s^0$: We show $\con(\tilde{s}, s)$: Both $\tilde{s}k \neq_0 0$ and $\tilde{s}k=_0 s + 1$ hold for all $k^0$.
		\item $t^1$: We show $\con(\tilde{t}, t)$: Suppose $\con(z, n)$ holds for a sequence $z^1$ and a number $n^0$. This implies that there exists some number $k^0$ with $zk = n+1$, and that every member of $z$ is equal to either zero or $n+1$. Therefore, $\tilde{t}zk = t(zk \minus 1) + 1 = tn+1$ holds and every member of $\tilde{t}z$ is equal to either zero or $tn+1$. We conclude $\con(\tilde{t}z, tn)$ and finally $\con(\tilde{t}, t)$.
		\item $\Pi_{\rho, \tau}$: We simply take $\Pi_{\comp{\rho}, \comp{\tau}}$. Suppose $\con(x, u)$ and $\con(y, v)$ hold for $x^{\comp{\rho}}$, $y^{\comp{\tau}}$, $u^{\rho}$ and $v^{\tau}$. The former one implies $\con(\Pi_{\comp{\rho}, \comp{\tau}} x y, \Pi_{\rho, \tau} u v)$ and therefore $\con(\Pi_{\comp{\rho}, \comp{\tau}}, \Pi_{\rho, \tau})$.
		\item $\Sigma_{\delta, \rho, \tau}$: Again, we simply take $\Sigma_{\comp{\delta}, \comp{\rho}, \comp{\tau}}$. Suppose $\con(x, u)$, $\con(y, v)$, and $\con(z, w)$ hold for $x^{\comp{\tau \rho \delta}}$, $y^{\comp{\rho \delta}}$, $z^{\comp{\delta}}$, $u^{\tau \rho \delta}$, $v^{\rho \delta}$, and $w^{\delta}$. We can show $\con(xz, uw)$, $\con(yz, vw)$, and therefore, $\con((xz)(yz), (uw)(vw))$, which is equivalent to the statement $\con(\Sigma_{\comp{\delta},\comp{\rho}, \comp{\tau}} xyz, \Sigma_{\delta, \rho, \tau} uvw)$. We conclude $\con(\Sigma_{\comp{\delta}, \comp{\rho}, \comp{\tau}}, \Sigma_{\delta, \rho, \tau})$.
		\item $(R_i)_{\tupl{\rho}}$: Let $k$ be the length of $\rho$. Let $\tupl{y}^{\tupl{\rho}}$ and
		$\tupl{z}$ be tuples where $\tupl{z}$ consists of $z_j^{\rho_j 0 \tupl{\rho}}$ for $j^0$ with $1 \leq j \leq k$, and assume that both $\con(\tupl{y}', \tupl{y})$ and $\con(\tupl{z}', \tupl{z})$ hold for $\tupl{y}'$ and $\tupl{z}'$ of appropriate type.\footnote{Both $\con(\tupl{y}', \tupl{y})$ and $\con(\tupl{z}', \tupl{z})$ are abbreviations for $\con(y'_i, y_i)$ and $\con(z'_i, z_i)$ for all $i$ with $1 \leq i \leq k$, respectively.} We define terms $(T_i)_{\tupl{\rho}}$ similar to $(R_i)_{\tupl{\rho}}$:
		\begin{equation*}
		(\tupl{T}_{\tupl{\rho}}): \left\{
		\begin{aligned}
		(T_i)_{\rho} 0 \tupl{y}' \tupl{z}' &=_{\comp{\rho}_i} y'_i\\
		(T_i)_{\rho} (Sx^0) \tupl{y}' \tupl{z}' &=_{\comp{\rho}_i} z'_i (\tupl{T}_{\tupl{\rho}} x \tupl{y}' \tupl{z}') \tilde{x}\period
		\end{aligned}
		\right.
		\end{equation*}
		Notice: The only difference between $(\tupl{T}_{\tupl{\rho}})$ and $(\tupl{R}_{\tupl{\rho}})$ lies in the second equation where we do not give $z'_i$ the number $x$ of the previous recursion step directly, but indirectly as the sequence $\tilde{x}$ constructing this number, instead. We do this because $z'_i$ is a constructing term and therefore expects itself constructing terms (such as $\tilde{x}$) for numbers where $z_i$ expects actual numbers (such as $x$).

		First, we show for all natural numbers $i$ with $1 \leq i \leq k$ and $n$ that $\con((T_i)_{\tupl{\rho}} n, (R_i)_{\tupl{\rho}} n)$ holds by induction on $n$.  We have $\con(y'_i, y_i)$ and therefore $\con((T_i)_{\tupl{\rho}} 0 \tupl{y}' \tupl{z}', (R_i)_{\tupl{\rho}} 0 \tupl{y} \tupl{z})$, which gives us $\con((T_i)_{\tupl{\rho}} 0, (R_i)_{\tupl{\rho}} 0)$. We assume that our hypothesis holds for $n$. Now we can conclude that this yields
		\begin{equation*}
		\con(z'_i ((T_1)_{\tupl{\rho}} n \tupl{y}' \tupl{z}') \dots ((T_k)_{\tupl{\rho}} n \tupl{y}' \tupl{z}')\tilde{n}, z_i ((R_1)_{\tupl{\rho}} n \tupl{y} \tupl{z}) \dots ((R_k)_{\tupl{\rho}} n \tupl{y} \tupl{z})n)\comma
		\end{equation*}
		which in turn implies $\con((T_i)_{\tupl{\rho}} (Sn) \tupl{y}' \tupl{z}', (R_i)_{\tupl{\rho}} (Sn) \tupl{y} \tupl{z})$, and therefore\\ $\con((T_i)_{\tupl{\rho}} (Sn), (R_i)_{\tupl{\rho}} (Sn))$.
		
		Now, let $\tupl{\tau}$ be a tuple of types such that $\rho_i = 0 \tau_l \dots \tau_1$ holds where $l$ is the length of $\tupl{\tau}$. We define the term
		\begin{equation*}
		s:= \lambda {x'}^1, {\tupl{y}'}^{\comp{\tupl{\rho}}}, \tupl{z}', {\tupl{u}'}^{\comp{\tupl{\tau}}}, v^0. \left\{
		\begin{aligned}
		(T_i)_{\tupl{\rho}} (x'v_l \minus 1) \tupl{y} \tupl{z} \tupl{u} v_r\phantom{\text{0}} &\text{if $x' v_l \neq_0 0$,}\\
		\text{0}\phantom{(T_i)_{\tupl{\rho}} (xv_l \minus 1) \tupl{y} \tupl{z} \tupl{u} v_r} &\text{otherwise}
		\end{aligned}\right.
		\end{equation*}
		where $\tupl{y}$, $\tupl{y}'$, $\tupl{z}$, and $\tupl{z}'$ have the same types as before, and $v_l$ and $v_r$ are the components of $v$ if we interpret it as a pair of natural numbers. Additionally to $\con(\tupl{y}', \tupl{y})$ and $\con(\tupl{z}', \tupl{z})$, assume now that $\con(x', x)$ and $\con(\tupl{u}', \tupl{u})$ hold for variables $x$ and $\tupl{u}$ of matching types. Let $v_l$ be a natural number such that $x' v_l \neq_0 0$ holds. Then $\con((T_i)_{\tupl{\rho}} (x'v_l \minus 1), (R_i)_{\tupl{\rho}} (x'v_l \minus 1))$. This implies the existence of some natural number $v_r$ with $(T_i)_{\tupl{\rho}} (x'v_l \minus 1) \tupl{y}' \tupl{z}' \tupl{u}' v_r \neq_0 0$. We conclude that there exists some $v := \pair{v_l}{v_r}$ with $sx' \tupl{y}' \tupl{z}' \tupl{u}' v \neq_0 0$. Furthermore, for all $v^0$ with $x' v_l \neq_0 0$, we have $x' v_l \minus 1 =_0 x$, and therefore that $s x' \tupl{y}' \tupl{z}' \tupl{u}' v =_0 (T_i)_{\tupl{\rho}} x \tupl{y}' \tupl{z}' \tupl{u}' v_r$ is equal to zero or $(R_i)_{\tupl{\rho}} x \tupl{y} \tupl{z} \tupl{u} + 1$. We conclude $\con(s x' \tupl{y}' \tupl{z}' \tupl{u}', (R_i)_{\tupl{\rho}} x \tupl{y} \tupl{z} \tupl{u})$, and finally $\con(s, (R_i)_{\tupl{\rho}})$.
	\end{itemize}
	
	Assume that $s^{\tau \rho}$ and $t^{\rho}$ are terms where all free variables are of type $0$ or $1$ with $\con(s', s)$ and $\con(t', t)$ for some terms ${s'}^{\comp{\tau \rho}}$ and ${t'}^{\comp{\rho}}$ with the same free variables. By simply using its hereditary definition, we conclude $\con(s't', st)$. This way, we can inductively show the claim for all considered terms.
\end{proof}

\begin{definition}[Associates, \cite{kleene}, \cite{kreisel}, and \cite{troelstra1973}]\label{associateConvention}\index{associate}\index{$\halts{t \cdot x}$}\index{w@$\wit$}
	Let $t^1$ be a term. We abbreviate
	\begin{align*}
	\halts{t \cdot z^1} &:\equiv \forall u^0 \exists v^0 (t (\hat{u}*\overline{z}v) \neq_0 0)\\
	\wit(t \cdot z^1, x^1) &:\equiv \forall u^0 \exists v^0 (t (\hat{u}*\overline{z}v) =_0 xu + 1 \land \forall w <_0 v (t (\hat{u}*\overline{z}w) =_0 0))\period
	\end{align*}
	In other words: $\halts{t \cdot z}$ holds if for all $u^0$ there exists some finite initial segment $z'$ of $z$ with $t(\hat{u} * z') \neq_0 0$. The predicate $\wit(t \cdot z, x)$ holds if and only if for all $u^0$ the least initial segment $z'$ of $z$ with $t(\hat{u} * z') \neq_0 0$ lets us compute the value $xu =_0 t(\hat{u} * z') - 1$.
\end{definition}

The following two lemmas enable us to translate between constructing terms and associates. The first lemma extracts an associate out of an constructing term:

\begin{lemma}\label{le:associateFacts}
	We can prove the following in $\EPAomega + \QFACnil$:
	\begin{enumerate}[label=\alph*)]
		\item\label{leit:associateFactsContinuity} For all closed terms $t^{0(1)}$ there exists some closed term $s^{0(1)}$, a so-called \emph{modulus of pointwise continuity}\index{modulus of pointwise continuity}, such that the following holds:
		\begin{equation*}
		\bigwedge_{0 \leq_0 k <_0 sx^1} x k =_0 y^1 k \to tx =_0 ty \comma
		\end{equation*}
		\item\label{leit:associateFactsSearch} For every closed term $t^{0(1)}$ there exists some closed term $s^{0(0)}$ with both
		\begin{equation*}
		\bigwedge_{0 \leq_0 k <_0 n} x^1 k =_0 y^1 k \land s(\overline{x} n) =_0 0 \to tx =_0 t(\overline{x, n})
		\end{equation*}
		and $\forall x^1 \exists n^0 \forall m \geq_0 n \, (s(\overline{x}m) =_0 0)$.
		\item\label{leit:associateFactsReplaceConWithWit} For all closed terms $t^{\comp{1}(1)}$ there exists some closed term $s^1$ such that
		\begin{flalign*}
		&&&\con(ty^1, z^1) \to \halts{s \cdot y}&\\
		&\text{and }&&\con(ty^1, z^1) \land \wit(s \cdot y, w^1) \to z =_1 w&
		\end{flalign*}
		hold.
	\end{enumerate}
\end{lemma}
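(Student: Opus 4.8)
The three parts form a pipeline whose genuine content lies in part~\ref{leit:associateFactsContinuity}; parts~\ref{leit:associateFactsSearch} and~\ref{leit:associateFactsReplaceConWithWit} are then obtained by bootstrapping. The plan is therefore to prove \ref{leit:associateFactsContinuity} first, deduce \ref{leit:associateFactsSearch} from it by two applications, and finally assemble the associate in \ref{leit:associateFactsReplaceConWithWit} out of \ref{leit:associateFactsSearch} together with the constructing-term machinery of Lemma~\ref{closedTermsAreCp}.

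For \ref{leit:associateFactsContinuity} I would establish the standard fact that every closed term $t^{0(1)}$ of $\EPAomega$ denotes a pointwise continuous type-$2$ functional whose modulus is itself given by a closed term. The natural route is a logical-relations argument by induction on the structure of $t$: to each closed term one attaches, at every type, modulus data witnessing that the functional reads only finitely much of its type-$1$ inputs before fixing an output, and one checks that this data is preserved by the combinators $\Pi$ and $\Sigma$ and, crucially, by the recursors, where the modulus of $R\,\tupl{y}\,\tupl{z}\,n$ is built by recursion on $n$ from the moduli of $\tupl{y}$ and $\tupl{z}$. Specialising to type $0(1)$ yields the displayed closed term $s^{0(1)}$ with $\bigwedge_{0 \le_0 k <_0 sx} xk =_0 yk \to tx =_0 ty$. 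This is the main obstacle: unlike the purely syntactic induction of Lemma~\ref{closedTermsAreCp}, the recursor case demands genuine bookkeeping of how moduli accumulate, and, since only $\QFACnil$ (choice from type $0$, not from type $1$) is available, the modulus must be produced as an explicit term rather than extracted from a $\forall x^1\,\exists n^0$ statement. Since the induction produces the modulus as a term, it relativises to additional free variables of type $0$, a fact I use in part~\ref{leit:associateFactsReplaceConWithWit}. Alternatively one may cite the corresponding continuity theorem for closed $\EPAomega$-terms from \cite{troelstra1973} or \cite{kohlenbach2008}.

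Part~\ref{leit:associateFactsSearch} I would derive from \ref{leit:associateFactsContinuity} applied both to $t$, giving a modulus $s_t^{0(1)}$, and to $s_t$ itself (a closed type-$2$ term), giving a modulus $s_{s_t}^{0(1)}$. I define the closed term $s^{0(0)}$ by $s(\overline{x}n) := \sg\!\left(s_t(\zerosAfter{x}{n}) \minus n\right)$, which is legitimate since the zero-extension $\zerosAfter{x}{n}$ and the length $n$ are recovered from the sequence code $\overline{x}n$. Thus $s(\overline{x}n) =_0 0$ holds exactly when $s_t(\zerosAfter{x}{n}) \le_0 n$; in that case $x$ and $\zerosAfter{x}{n}$ agree on their first $s_t(\zerosAfter{x}{n}) \le_0 n$ values, so the modulus property for $s_t$ gives $tx =_0 t(\zerosAfter{x}{n})$, which is the first implication (the extra hypothesis $\bigwedge_{0 \le_0 k <_0 n} xk =_0 yk$ being superfluous for this conclusion). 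For the second clause, set $N := s_t(x)$ and $\ell := s_{s_t}(x)$; for every $m \ge_0 \ell$ the point $\zerosAfter{x}{m}$ agrees with $x$ below $\ell$, so $s_t(\zerosAfter{x}{m}) =_0 N$, and hence for $m \ge_0 \max(\ell, N)$ we get $s(\overline{x}m) =_0 0$, proving $\forall x^1\,\exists n^0\,\forall m \ge_0 n\,(s(\overline{x}m) =_0 0)$.

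Finally, for \ref{leit:associateFactsReplaceConWithWit} I read off $z$ from $ty$ pointwise. For a number $u$, take the constructing term $\tilde{u}$ of Lemma~\ref{closedTermsAreCp} with $\con_0(\tilde{u}, u)$; then $\con_1(ty, z)$ forces $\con_0((ty)\tilde{u}, zu)$, so the least position $m$ with $(ty)\tilde{u}\,m \neq_0 0$ exists and satisfies $(ty)\tilde{u}\,m =_0 zu + 1$. Applying \ref{leit:associateFactsSearch} to the closed term $(ty)\tilde{u}\,m$, viewed as a type-$0(1)$ functional of $y$ with number parameters $u$ and $m$ (the relativisation noted above), I obtain a neighbourhood function signalling when a finite segment of $y$ already determines the entry $(ty)\tilde{u}\,m$. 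I then define the associate $s^1$ so that on input $\hat{u} * \sigma$ it searches the positions $m <_0 \lth\sigma$ for the least one whose value is both determined by $\sigma$ (the signal having fired for every $m' \le_0 m$) and nonzero, outputting that value plus one in that case and $0$ otherwise, with all searches bounded by $\lth\sigma$. Then $\halts{s \cdot y}$ holds because along every $y$ the signals eventually fire and the nonzero entry is eventually located, and $\wit(s \cdot y, w) \to z =_1 w$ holds because at the least witnessing length the neighbourhood property guarantees the located entry equals the true value $(ty)\tilde{u}\,m =_0 zu + 1$. The only delicate point is forcing $s$ to return $0$ strictly below the witnessing length, which is routine neighbourhood-function bookkeeping.
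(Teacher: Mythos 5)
You take the same route as the paper: part a) is discharged via the continuity literature (the paper simply cites Theorem 2.7.8 of Troelstra), part b) uses essentially the identical term --- your $\sg(s_t(\zerosAfter{x}{n}) \minus n)$ is the paper's $\lambda k^0.\, t'(r(k)) \minus \lth(k)$ --- verified by the same modulus-of-the-modulus argument, and part c) bootstraps a neighbourhood function out of b) so that a finite segment of $y$ certifiably determines entries of $ty$. The only real divergence is the bookkeeping in c). The paper avoids any parametrized version of a)/b): it forms the single closed term $s' := \lambda x^1.\, t(\lambda k^0. x_{k+2})\,\tilde{x}_1\, x_0$, coding the number argument and the candidate position into the first two entries of the input sequence, and it recovers the candidate position from the \emph{length} of the supplied segment via the pairing function, the component $v_r$ acting as padding that gives the neighbourhood function room to fire; you instead relativize b) to number parameters $u, m$ and run a bounded search over positions $m <_0 \lth\sigma$. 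Your relativization claim is true, but if you discharge a) by citation then the cited theorem concerns closed terms, so you still owe an argument --- and the cheapest one is exactly the paper's coding trick. Two smaller points. First, an off-by-one: since $\con_0((ty)\tilde{u}, zu)$ forces every nonzero entry of $(ty)\tilde{u}$ to equal $zu+1$, your associate must output the located entry \emph{as is}, not ``that value plus one''. Second, your ``delicate point'' (forcing $0$ below the witnessing length) and your requirement that the signal fire for every $m' \leq_0 m$ are both unnecessary: any certified nonzero output is a true entry of $(ty)\tilde{u}$ and hence equals $zu+1$ no matter which position was located, so the least length with nonzero output automatically returns the correct value; this observation is precisely how the paper verifies $\wit(s \cdot y, z)$ directly (and then concludes $z =_1 w$ from uniqueness of witnesses).
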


\begin{proof}\mbox{}
	\begin{enumerate}[label=\alph*)]
		\item This follows from Theorem 2.7.8 in \cite[p.~158]{troelstra1973}.
		\item Let ${t'}^{0(1)}$ be the modulus of continuity from \ref{leit:associateFactsContinuity} for $t$. We define the term
		\begin{equation*}
		s := \lambda k^0. t'(r(k)) \dot{-} \lth(k)
		\end{equation*}
		where $r^{1(0)}$ is the term that maps finite sequences represented as natural numbers $k$ to infinite sequences that start like $k$ and continue with zeros. Assume that $s(\overline{x}^1 n^0) =_0 0$ holds. Then $n$, which is the length of $\overline{x}n$, is greater or equal to $t'(r(\overline{x}n)) =_0 t'(\zerosAfter{x}{n})$. Since $x$ and $\zerosAfter{x}{n}$ coincide at the first $n$ positions, this means that $tx =_0 t(\overline{x, n})$ holds. Now, we want to show $\forall x^1 \exists n^0 \forall m \geq_0 n (s(\overline{x}m) =_0 0)$. For this purpose, let $t''$ be the modulus of continuity from \ref{leit:associateFactsContinuity} for $t'$. Then, $t' x =_0 t'(\overline{x, n})$ holds for all natural numbers $n \geq_0 t'' x$. If we define $n := \max(t'x, t''x)$, we have the following for $m \geq_0 n$: $t'(\overline{x, m}) =_0 t'x \leq_0 n \leq_0 m =_0 \lth(\overline{x}m)$. This implies $s(\overline{x}m) =_0 0$ for all $m \geq_0 n$.
		\item Let $r^{1(0)}$ be a term that maps finite sequences to infinite sequences like before. We define
		\begin{alignat*}{3}
		&s'' &&:= \lambda k^0. (\lth(k) \minus 1)_l \comma&\\
		&s' &&:= \lambda x^1. t(\lambda k^0. x(k+2)) \widetilde{(x1)} (x0) \comma&\\
		&s &&:= \lambda k^0. \left\{
		\begin{aligned}
		s'(r(\widehat{s''k} * k))\phantom{0} &\text{if $q(\widehat{s''k} * k) =_0 0$,}\\
		0\phantom{s'(r(\widehat{s''k} * k))} &\text{otherwise}
		\end{aligned}\right.&
		\end{alignat*}
		where $q^1$ is the term with the property from \ref{leit:associateFactsSearch} for $s'$. Let $y^1$ be arbitrary and assume that there exists some $z^1$ with $\con(ty,z)$. First, we show that $\halts{s \cdot y}$ holds: Let $u^0$ be arbitrary, then there exists some $n^0$ with $ty\tilde{u}n \neq_0 0$ since $\con(ty\tilde{u}, zu)$ holds. By definition of $q$, there exists some $m^0$ such that $q(\overline{\hat{n} * \hat{u} * y} (m' + 2)) =_0 0$ holds for all $m' \geq_0 m$. Let $k := \hat{u}*\overline{y}(\pair{n}{m})$. We want to show $sk =_0 ty\tilde{u}n$ in order to argue that $sk$ is nonzero. But first, we prove $s''(\hat{u} * \overline{y}(\pair{n}{m})) =_0 n$:
		\begin{align}
		&s''(\hat{u} * \overline{y}(\pair{n}{m}))\notag\\
		=_0\ &s''(\overline{\hat{u} * y}(\pair{n}{m} + 1))\notag\\
		=_0\ &(\lth(\overline{\hat{u} * y}(\pair{n}{m} + 1)) \minus 1)_l\notag\\
		=_0\ &(\pair{n}{m} + 1 \minus 1)_l\notag\\
		=_0\ &n\period\tag{$*$}
		\intertext{We have $\pair{n}{m} \geq m$ and hence:}
		&q(\widehat{s''k} * k)\notag\\
		=_0\ &q(\langle s''(\hat{u} * \overline{y}(\pair{n}{m})) \rangle * \hat{u} * \overline{y}(\pair{n}{m}))\notag\\
		\overset{(*)}{=}_0\ &q(\hat{n} * \overline{\hat{u} * y}(\pair{n}{m} + 1))\notag\\
		=_0\ &q(\overline{\hat{n} * \hat{u} * y}(\pair{n}{m} + 2))\notag\\
		=_0\ &0\period\tag{$\dagger$}
		\intertext{Furthermore, we have}
		&s'(r(\widehat{s'' k} * k))\notag\\
		=_0\ &s'(r(\langle s''(\hat{u} * \overline{y}(\pair{n}{m})) \rangle * \hat{u} * \overline{y}(\pair{n}{m})))\notag\\
		=_0\ &s'(r(\langle s''(\hat{u} * \overline{y}(\pair{n}{m})) \rangle * \overline{\hat{u} * y}(\pair{n}{m} + 1)))\notag\\
		\overset{(*)}{=}_0\ &s'(\hat{n} * \zerosAfter{\hat{u} * y}{\pair{n}{m} + 1})\notag\\
		=_0\ &s'(\zerosAfter{\hat{n} * \hat{u} * y}{\pair{n}{m} + 2})\period\notag
		\intertext{Now, because $q(\overline{\hat{n} * \hat{u} * y}(\pair{n}{m} + 2)) =_0 0$ holds, we have}
		=_0\ &s'(\hat{n} * \hat{u} * y)\notag\\
		=_0\ &ty\tilde{u}n \comma\notag
		\end{align}
		for which still $ty\tilde{u}n \neq_0 0$ holds, which implies
		\begin{equation*}
		s'(r(\widehat{s''k} * k)) \neq_0 0\period
		\end{equation*}
		This, together with the result ($\dagger$), yields
		\begin{equation*}
		sk \neq_0 0
		\end{equation*}
		and, since $u$ was arbitrary, $\halts{s \cdot y}$.
		
		Now, we show that $\wit(s \cdot y, z)$ holds. Under the assumption $\wit(s \cdot y, w)$, this implies our claim $z =_1 w$ since such a computed value is always unique. We prove that for all natural numbers $u$ and $v$ with $s(\hat{u} * \overline{y} v) \neq_0 0$, we already have $s(\hat{u} * \overline{y} v) =_0 zu + 1$. Combined with $\halts{s \cdot y}$, this will imply $\wit(s \cdot y, z)$. Therefore, assume that $u$ and $v$ are such that $s(\hat{u} * \overline{y} v) \neq_0 0$ holds. By definition of $s$, this yields both $q(\langle s''(\hat{u} * \overline{y} v)\rangle * u * \overline{y} v) =_0 0$ and $s'(r(\langle s''(\hat{u} * \overline{y} v)\rangle * \hat{u} * \overline{y} v)) \neq_0 0$. With $s''(\hat{u} * \overline{y}v) =_0 v_l$, the former one implies both $q(\overline{\hat{v}_l * \hat{u} * y}(v + 2)) =_0 0$ and $s'(\overline{\hat{v}_l * \hat{u} * y}(v + 2)) \neq_0 0$, and therefore $s'(\hat{v}_l * \hat{u} * y) =_0 s'(\zerosAfter{\hat{v}_l * \hat{u} * y}{v+2}) \neq_0 0$. Since $s'(\hat{v}_l * \hat{u} * y) =_0 ty\tilde{u}v_l$ holds, we have $ty\tilde{u}v_l \neq_0 0$, which together with $\con(ty, z)$ yields $s(\hat{u} * \overline{y} v) =_0 ty\tilde{u}v_l =_0 zu + 1$.
	\end{enumerate}
\end{proof}

The following lemma tells us that, under certain circumstances, sequences that have been computed by associates are linear:

\begin{lemma}\label{le:wit}
	Let $t^{0(0)(0)(1)(1)}$ be a closed term such that
	\begin{equation*}
	\fCenter tx^1 y^1 u^0 v^0 \leftrightarrow (x(\hat{u} * \overline{y}v) \neq_0 0) \land \forall w <_0 v (x(\hat{u} * \overline{y}w) =_0 0)
	\end{equation*}
	holds in $\EPAomega$. We define $\Aat(x^1, y^1, u^0, v^0) :\equiv (txyuv =_0 0)$ and can prove the following in $\ELPAomegal$:
	\begin{enumerate}[label=\alph*)]
		\item\label{leit:witHalt2Aqf} $\fCenter {\emL{(\halts{x^1 \cdot y^1})}}^{\bot}, \forall u^0 \exists v^0 \Aat(x, y, u, v)$,
		\item\label{leit:witAqf2Wit} $\fCenter \lp^{\bot}(x), \lp^{\bot}(y), (\existsL V \forall u^0 \Aat(x, y, u, Vu))^{\bot}, \existsL z^1 \emL{\wit}(x \cdot y, z)$,
		\item\label{leit:witIntroC} $\fCenter \lp^{\bot}(x), \lp^{\bot}(y), {\emL{(\halts{x^1 \cdot y^1})}}^{\bot}, \existsL z^1 \emL{\wit}(x \cdot y, z)$.
	\end{enumerate}
\end{lemma}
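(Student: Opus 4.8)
Throughout I use that $\existsL z\,A$ abbreviates $\exists z(\lp(z) \otimes A)$ and dually $\forallL$, so that ${(\existsL V\,A)}^{\bot} = \forall V(\lp^{\bot}(V) \Par A^{\bot})$: introducing $\existsL z\,A$ amounts to choosing a witness term and proving $\lp$ of it together with $A$, while its negation lets one assume a fresh $z$ with $\lp(z)$ and $A$. The three parts are proved in order, each feeding into the next. For \ref{leit:witHalt2Aqf} the statement only concerns embedded, nonlinear content, so the plan is to reduce it to $\EPAomega$ and invoke Lemma~\ref{lem:embedPeanoIntoLinear}. In $\EPAomega$ I prove $\halts{x \cdot y} \to \forall u^0 \exists v^0\, \Aat(x,y,u,v)$: fixing $u$, the premise supplies some $v_0$ with $x(\hat{u} * \overline{y}v_0) \neq_0 0$, and since this is decidable in $v$ the least-number principle (from quantifier-free induction) produces the least such $v$; for it the characterizing equivalence of $t$ gives $txyuv =_0 0$, that is $\Aat(x,y,u,v)$. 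Applying Lemma~\ref{lem:embedPeanoIntoLinear} and using that $\Aat$ is atomic (so $\emL{\forall u \exists v\,\Aat} = \forall u \exists v\,\Aat$) yields exactly ${\emL{(\halts{x \cdot y})}}^{\bot}, \forall u \exists v\,\Aat(x,y,u,v)$.

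For \ref{leit:witAqf2Wit} I combine an explicit linear witness with embedded reasoning. After reading the negated premise I assume a fresh $V$ with $\lp(V)$ and $\forall u\,\Aat(x,y,u,Vu)$, set $z := \lambda u.\, x(\hat{u} * \overline{y}(Vu)) \minus 1$, and prove $\existsL z\,\emL{\wit}(x \cdot y, z)$ by instantiating with this $z$ and splitting $\lp(z) \otimes \emL{\wit}(x \cdot y, z)$ via ($\otimes$). On the left branch I route $\lp^{\bot}(x), \lp^{\bot}(y), \lp^{\bot}(V)$ and derive $\lp(z)$: writing $z = c\,x\,y\,V$ for a closed term $c$, I use ($\lp$) for $\lp(c)$ and then ($\lp$-app) three times. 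On the right branch I route $(\forall u\,\Aat)^{\bot}$ and prove $\emL{\wit}(x \cdot y, z)$ through Lemma~\ref{lem:embedPeanoIntoLinear}, since in $\EPAomega$ the hypothesis together with the equivalence for $t$ shows that $v := Vu$ witnesses $\wit$ (the nonzero value $x(\hat{u} * \overline{y}(Vu))$ equals $zu + 1$ and all shorter initial segments give $0$). After the ($\otimes$), an ($\exists$) on $z$ (which removes $V$ from that position), a ($\Par$) merging $\lp^{\bot}(V)$ with $(\forall u\,\Aat)^{\bot}$, and a ($\forall$) on $V$ reconstitute ${(\existsL V \forall u\,\Aat)}^{\bot}$ and give the claim.

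For \ref{leit:witIntroC} I glue the first two parts with the choice axiom. Taking $\alpha := txy$, so that $\alpha u v =_0 0$ is literally $\Aat(x,y,u,v)$, the instance of ($\lAC$) reads ${(\forall u \exists v\,\Aat)}^{\bot}, \exists V(\forall u\,\Aat(x,y,u,Vu) \otimes\, !(\lp(\alpha) \lto \lp(V)))$; a (cut) against \ref{leit:witHalt2Aqf} replaces the premise by ${\emL{(\halts{x \cdot y})}}^{\bot}$. I next derive $\lp^{\bot}(x), \lp^{\bot}(y), \lp(\alpha)$ from ($\lp$) and ($\lp$-app), pair it with the identity $\lp^{\bot}(V), \lp(V)$ by ($\otimes$), and apply (d?) to obtain $\lp^{\bot}(x), \lp^{\bot}(y), {(!(\lp(\alpha) \lto \lp(V)))}^{\bot}, \lp(V)$; tensoring this against an identity on $\forall u\,\Aat$ and closing with ($\Par$), ($\exists$) on $V$ and ($\forall$) on $V$ produces a helper sequent whose (cut) against the line above turns the matrix of the existential into $\lp(V) \otimes \forall u\,\Aat$, i.e. $\existsL V \forall u\,\Aat$, at the cost of one extra pair $\lp^{\bot}(x), \lp^{\bot}(y)$. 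A final (cut) against \ref{leit:witAqf2Wit} and two applications of ($\lp$-con) merging the duplicated $\lp^{\bot}(x), \lp^{\bot}(y)$ deliver the sequent.

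The routine mathematics is easy; the care lies in the linear bookkeeping of \ref{leit:witIntroC}. One must extract $\lp(V)$ from the guarded modal formula $!(\lp(\alpha) \lto \lp(V))$, which consumes $\lp(x), \lp(y)$ to build $\lp(\alpha)$, while still keeping a second copy of $\lp(x), \lp(y)$ for \ref{leit:witAqf2Wit}; this is exactly what ($\lp$-con) licenses, and arranging the ($\exists$)/($\forall$) on $V$ so that the leftover $\lp^{\bot}(x), \lp^{\bot}(y)$ stay outside the quantifier is the delicate point.
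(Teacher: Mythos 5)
Your proposal is correct and follows essentially the same route as the paper's own proof: part \ref{leit:witHalt2Aqf} by proving the implication in $\EPAomega$ (least witness) and transferring via Lemma~\ref{lem:embedPeanoIntoLinear}; part \ref{leit:witAqf2Wit} with the same witness term $\lambda u.\,x(\hat{u} * \overline{y}(Vu)) \minus 1$, splitting $\lp(z)\otimes\emL{\wit}$ and decomposing $\lp$ of the applied term via ($\lp$), ($\lp$-app); and part \ref{leit:witIntroC} by instantiating ($\lAC$) with $\alpha :\equiv txy$, extracting $\lp(V)$ from the guard $!(\lp(\alpha)\lto\lp(V))$ at the cost of an extra $\lp^{\bot}(x),\lp^{\bot}(y)$ via (d?) and identity tensoring, then cutting with \ref{leit:witHalt2Aqf} and \ref{leit:witAqf2Wit} and contracting with ($\lp$-con). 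The only differences are inessential reorderings of the cuts.
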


\begin{proof}\mbox{}
	\begin{enumerate}[label=\alph*)]
		\item Assume $\halts{x^1 \cdot y^1}$. This means that for all $u^0$ there exists some $v^0$ with $x(\hat{u} * \overline{y}v) \neq_0 0$. If we choose $v$ as the least such number, we have $\Aat(x, y, u, v)$. We can therefore prove $\halts{x^1 \cdot y^1} \fCenter \forall u^0 \exists v^0 \Aat(x, y, u, v)$ in $\EPAomega$, and may translate this result to a proof of $\fCenter {\emL{(\halts{x^1 \cdot y^1})}}^{\bot}, \forall u^0 \exists v^0 \Aat(x, y, u, v)$ in $\ELPAomegal$ via Lemma~\ref{lem:embedPeanoIntoLinear}.
		\item We define
		\begin{equation*}
		s := \lambda x^1, y^1, V^1, u^0. x(\hat{u} * \overline{y}(Vu)) \minus 1\period
		\end{equation*}
		Assume that $\Aat(x, y, u, Vu)$ holds for all natural numbers $u$. By the definition of $\wit$ it is clear that $\wit(x \cdot y, sxyV)$ holds. We can therefore prove\
		\begin{equation*}
		\forall u^0 \Aat(x, y, u, Vu) \fCenter \wit(x \cdot y, sxyV)
		\end{equation*}
		in $\EPAomega$. Like above, we may translate this result to a proof of
		\begin{equation*}
		\fCenter (\forall u^0 \Aat(x, y, u, Vu))^{\bot}, \emL{\wit}(x \cdot y, sxyV)
		\end{equation*}	
		in $\ELPAomegal$ via Lemma~\ref{lem:embedPeanoIntoLinear}. We continue with an application of ($\otimes$) to the instance $\vdash \lp^{\bot}(sxyV), \lp(sxyV)$ of (id):
		\begin{equation*}
		\fCenter \lp^{\bot}(sxyV), (\forall u^0 \Aat(x, y, u, Vu))^{\bot}, \lp(sxyV) \otimes \emL{\wit}(x \cdot y, sxyV)\period
		\end{equation*}
		Now, we apply ($\lp$-app) twice and cut the $\lp^{\bot}(s)$ away using (cut) and ($\lp$) since $s$ is a closed term.
		\begin{equation*}
		\fCenter \lp^{\bot}(x), \lp^{\bot}(y), \lp^{\bot}(V), (\forall u^0 \Aat(x, y, u, Vu))^{\bot}, \lp(sxyV) \otimes \emL{\wit}(x \cdot y, sxyV)\period
		\end{equation*}
		Finally, we reach our claim by using ($\Par$), ($\exists$), and ($\forall$):
		\begin{equation*}
		\fCenter \lp^{\bot}(x), \lp^{\bot}(y), (\existsL V \forall u^0 \Aat(x, y, u, Vu))^{\bot}, \existsL z^1 \emL{\wit}(x \cdot y, z)\period
		\end{equation*}
		\item
		We start off by combining two instances of (id) with ($\otimes$):
		\begin{equation*}
			\fCenter (\forall u^0 \Aat(x, y, u, Vu))^{\bot}, \lp^{\bot}(V^1), \lp(V) \otimes \forall u^0 \Aat(x, y, u, Vu)\period
		\end{equation*}
		We introduce a further instance of (id) with ($\otimes$), and apply (d$?$):
		\begin{multline*}
			\fCenter (\forall u^0 \Aat(x, y, u, Vu))^{\bot}, \lp^{\bot}(txy),\\(!(\lp(txy) \lto \lp(V^1)))^{\bot},\lp(V) \otimes \forall u^0 \Aat(x, y, u, Vu)\period
		\end{multline*}
		After the introduction of an existential quantifier and splitting $\lp^{\bot}(txy)$ into three parts $\lp^{\bot}(t)$, $\lp^{\bot}(x)$, and $\lp^{\bot}(y)$ using ($\lp$-app), where we can immediately cut $\lp^{\bot}(t)$ away with ($\lp$) since $t$ is closed, we reach the following sequent:
		\begin{multline*}
		\fCenter (\forall u^0 \Aat(x, y, u, Vu))^{\bot}, \lp^{\bot}(x), \lp^{\bot}(y),\\(!(\lp(txy) \lto \lp(V^1)))^{\bot}, \existsL V^1 \forall u^0 \Aat(x, y, u, Vu)\period
		\end{multline*}
		Now, we apply ($\Par$) and ($\forall$)
		\begin{multline*}
		\fCenter (\exists V (\forall u^0 \Aat(x, y, u, Vu) \otimes !(\lp(txy) \lto \lp(V^1))))^{\bot},\\\lp^{\bot}(x), \lp^{\bot}(y),\existsL V^1 \forall u^0 \Aat(x, y, u, Vu)
		\end{multline*}
		and immediately cut with ($\lAC$) where $\alpha :\equiv txy$:
		\begin{equation*}
		\fCenter (\forall u^0 \exists v^0 \Aat(x, y, u, v))^{\bot} ,\lp^{\bot}(x), \lp^{\bot}(y),\existsL V^1 \forall u^0 \Aat(x, y, u, Vu)\period
		\end{equation*}
		Finally, by cutting with a) and b), and contracting the resulting double appearances of $\lp^{\bot}(x)$ and $\lp^{\bot}(y)$ using ($\lp$-con), we reach our claim:
		\begin{equation*}
		\fCenter \lp^{\bot}(x), \lp^{\bot}(y), {\emL{(\halts{x^1 \cdot y^1})}}^{\bot}, \existsL z^1 \emL{\wit}(x \cdot y, z)\period
		\end{equation*}
	\end{enumerate}
\end{proof}

\section{Dialectica Interpretation of Linear Logic with Linear Predicate}

For the functional interpretation we choose a compact notation inspired by \cite{olivaClassical}.

\begin{definition}[Functional interpretation]
	For every formula $A$ of $\ELPAomegaeq$ we define a formula $\Real{A}{\tupl{x}}{\tupl{y}}$ of $\ELPAomegaeq$ together with tuples of fresh variables $\tupl{x}$ and $\tupl{y}$, inductively. We omit $\tupl{x}$, resp. $\tupl{y}$, in the notation if the tuple is empty.
	\begin{alignat}{3}
	&\Real{A}{}{} &&:\equiv A \text{ for } A \in \{1, 0, \top, \bot, s =_0 t, s \doteq_0 t, \place(t)\}\period\notag\\
	\intertext{For the interpretation of the linear predicate, we introduce two possible definitions:}
	&\Real{\lp_{\tau}(t)}{x^0}{} &&:\equiv \lp_{\tau}(t) \otimes \place(\sg(x))\comma\tag{a}\\
	&\Real{\lp_{\tau}(t)}{x^{\comp{\tau}}}{} &&:\equiv \emL{\con}_{\tau}(x, t)\period\tag{b}\\
	\intertext{We assume that $\Real{A}{\tupl{x}}{\tupl{y}}$ and $\Real{B}{\tupl{u}}{\tupl{v}}$ have already been defined:}
	&\Real{A^{\bot}}{\tupl{u}}{\tupl{v}} &&:\equiv (\Real{A}{\tupl{v}}{\tupl{u}})^{\bot} \text{ for unnegated atomic formulas $A$}\comma\notag\\
	&\Real{A \oplus B}{\tupl{x}, \tupl{u}, k^0}{\tupl{y}, \tupl{v}} &&:\equiv (!k\doteq_0 0 \otimes \Real{A}{\tupl{x}}{\tupl{y}}) \oplus (!k \not\doteq_0 0 \otimes \Real{B}{\tupl{u}}{\tupl{v}})\comma\notag\\
	&\Real{A \with B}{\tupl{x}, \tupl{u}}{\tupl{y}, \tupl{v}, k^0} &&:\equiv (!k\doteq_0 0 \lto \Real{A}{\tupl{x}}{\tupl{y}}) \with (!k \not\doteq_0 0 \lto \Real{B}{\tupl{u}}{\tupl{v}})\comma\notag\\
	&\Real{A \Par B}{\tupl{f}, \tupl{g}}{\tupl{x}, \tupl{u}} &&:\equiv \Real{A}{\tupl{f}\tupl{u}}{\tupl{x}} \Par \Real{B}{\tupl{g}\tupl{x}}{\tupl{u}}\comma\notag\\
	&\Real{A \otimes B}{\tupl{x}, \tupl{u}}{\tupl{f}, \tupl{g}} &&:\equiv \Real{A}{\tupl{x}}{\tupl{f}\tupl{u}} \otimes \Real{B}{\tupl{u}}{\tupl{g}\tupl{x}}\comma\notag\\
	&\Real{\exists z A}{\tupl{x}}{\tupl{y}} &&:\equiv \exists z \Real{A}{\tupl{x}}{\tupl{y}}\comma\notag\\
	&\Real{\forall z A}{\tupl{x}}{\tupl{y}} &&:\equiv \forall z \Real{A}{\tupl{x}}{\tupl{y}}\comma\notag\\
	&\Real{?A}{}{\tupl{y}} &&:\equiv ?\exists \tupl{x} \Real{A}{\tupl{x}}{\tupl{y}}\comma\notag\\
	&\Real{!A}{\tupl{x}}{} &&:\equiv !\forall \tupl{y} \Real{A}{\tupl{x}}{\tupl{y}}\period\notag
	\end{alignat}
\end{definition}

\begin{lemma}\label{Lemma:InvolutionInterpretation}
	For all formulas $A$ of $\ELPAomegal$, we have $\Real{A^{\bot}}{\tupl{y}}{\tupl{x}} \equiv {\Real{A}{\tupl{x}}{\tupl{y}}}^{\bot}$.
\end{lemma}

\begin{proof}
	The proof proceeds by induction on the structure of $A$. For atomic formulas the result is clear by definition. Finally, we use that the interpretations of non-atomic formulas are well-behaved with respect to involution. For example, under the assumption that the induction hypothesis holds for $A$ and $B$, we can show the following for the formula $A \with B$:
	\begin{align*}
		\Real{(A \with B)^{\bot}}{\tupl{x}, \tupl{u}, k^0}{\tupl{y}, \tupl{v}} &\equiv \Real{(A^{\bot} \oplus B^{\bot})}{\tupl{x}, \tupl{u}, k^0}{\tupl{y}, \tupl{v}}\\
		&\equiv (!k\doteq_0 0 \otimes \Real{A^{\bot}}{\tupl{x}}{\tupl{y}}) \oplus (!k \not\doteq_0 0 \otimes \Real{B^{\bot}}{\tupl{u}}{\tupl{v}})\\
		&\equiv (!k\doteq_0 0 \otimes (\Real{A}{\tupl{y}}{\tupl{x}})^{\bot}) \oplus (!k \not\doteq_0 0 \otimes (\Real{B}{\tupl{v}}{\tupl{u}})^{\bot})\\
		&\equiv (!k\doteq_0 0 \lto \Real{A}{\tupl{y}}{\tupl{x}})^{\bot} \oplus (!k \not\doteq_0 0 \lto \Real{B}{\tupl{v}}{\tupl{u}})^{\bot}\\
		&\equiv ((!k\doteq_0 0 \lto \Real{A}{\tupl{y}}{\tupl{x}}) \with (!k \not\doteq_0 0 \lto \Real{B}{\tupl{v}}{\tupl{u}}))^{\bot}\\
		&\equiv (\Real{A \with B}{\tupl{y}, \tupl{v}}{\tupl{x}, \tupl{u}, k^0})^{\bot}\period
	\end{align*}
\end{proof}

\begin{theorem}\label{Theorem:Dialectica}
	Let $A_1$, \dots, $A_n$ be formulas of $\ELPAomegaeq$, and $\Gamma$ a set of formulas in $\EPAomega$, and assume that $\ELPAomegaeq + \emL{\Gamma}$ (or $\EAPAomegaeq + \emL{\Gamma}$) proves
	\begin{equation*}
	\vdash A_1, \dots, A_n\period
	\end{equation*}
	If the functional interpretation of $\lp_{\tau}(t)$ is defined as
	\begin{enumerate}[label=\alph*)]
		\item $\Real{\lp_{\tau}(t)}{x^0}{} :\equiv \lp_{\tau}(t) \otimes \place(\sg(x))$, or as
		\item $\Real{\lp_{\tau}(t)}{x^{\comp{\tau}}}{} :\equiv \emL{\con}_{\tau}(x, t)$,
	\end{enumerate}
	then
	$\ELPAomegaeq + \emL{\Gamma}$ (or $\EAPAomegaeq + \emL{\Gamma}$) proves
	\begin{equation*}
	\fCenter \Real{A_1}{\tupl{a}_1}{\tupl{x}_1}, \dots, \Real{A_n}{\tupl{a}_n}{\tupl{x}_n}
	\end{equation*}
	for tuples of terms $\tupl{a}_1$, \dots, $\tupl{a}_n$ where the free variables of each $\tupl{a}_i$ are among those in the sequence of terms $\tupl{x}_1, \dots, \tupl{x}_{i-1}, \tupl{x}_{i+1}, \dots, \tupl{x}_n$. In particular, the variables $\tupl{x}_i$ are not free in $\tupl{a}_i$.
\end{theorem}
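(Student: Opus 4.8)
The plan is to argue by induction on the length of the given derivation in $\ELPAomegaeq + \emL{\Gamma}$, the affine case $\EAPAomegaeq + \emL{\Gamma}$ being handled identically apart from the extra rule (w). For each axiom and rule I would exhibit witnessing tuples $\tupl{a}_0, \dots, \tupl{a}_n$ obeying the condition $\FV(\tupl{a}_i)\subseteq\{\tupl{x}_0,\dots,\tupl{x}_{i-1},\tupl{x}_{i+1},\dots,\tupl{x}_n\}$ and show the interpreted sequent is provable in the same system. Two facts streamline the whole induction: the Corollary that $\Real{A^\bot}{\tupl{y}}{\tupl{x}}\equiv(\Real{A}{\tupl{x}}{\tupl{y}})^\bot$, so dualised formulas never require separate treatment, and the derivability of the general identity $\vdash B, B^\bot$ for arbitrary $B$ (by induction on $B$ from atomic (id)). The latter discharges the (id) axiom: setting the witness of $A$ to equal the challenge variable of $A^\bot$ and conversely satisfies the free-variable condition and reduces the goal to $\vdash \Real{A}{\tupl{y}}{\tupl{x}}, (\Real{A}{\tupl{y}}{\tupl{x}})^\bot$.

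For the logical core I would follow the established pattern for the Dialectica interpretation of linear logic (cf.~\cite{olivaClassical}, \cite{lmcsFerreiraOliva}) and the linear predicate of the nonstandard interpretation (cf.~\cite{nonstandard}): the multiplicatives ($\otimes$), ($\Par$) compose witnesses through the applications $\tupl{f}\tupl{u}$ and $\tupl{g}\tupl{x}$ already present in the clauses for $\Real{A\otimes B}{}{}$ and $\Real{A\Par B}{}{}$; the quantifier rules instantiate or abstract a single term; and (d?), (w?), ($!$) match the shapes $\Real{?A}{}{\tupl{y}}$ and $\Real{!A}{\tupl{x}}{}$. The additive rules are where the tagging apparatus enters: to merge the two premises of ($\with$) into $\Real{A\with B}{}{\ldots,k}$ I would introduce the selector $k$ and define the shared $\Gamma$-witnesses by cases on whether $!k\doteq_0 0$ holds, using definition by cases in $\EPAomega$ together with (dot-eq$_1$), (dot-eq$_2$) and (dot-sub); the ($\oplus_i$) rules fix $k$ via (dot-succ). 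The embedded axioms $\emL{A}$ for $A\in\EPAomega$, and $\emL{\Gamma}$, are nonlinear and carry no content after interpretation, so their interpretations follow by Lemma~\ref{lem:embedPeanoIntoLinear}; the axioms ($\lp$), ($\lp$-app), ($\lp$-con) and the tag axioms (tag-0), (tag-con), (tag-app) are checked directly under each of the definitions (a) and (b) of $\Real{\lp_\tau(t)}{}{}$.

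The (cut) rule deserves attention because matching witness and challenge across the two premises looks circular. The resolution is exactly the free-variable condition: in $\vdash\Gamma,A$ the witness of $A$ must not mention $A$'s own challenge $\tupl{y}$, and dually in $\vdash A^\bot,\Delta$ the witness of $A^\bot$ must not mention its own challenge. Hence the challenge handed to $A$ (computed from $\Delta$'s challenges) and the witness of $A$ (computed from $\Gamma$'s challenges) can be substituted into one another acyclically, yielding well-typed witnesses for $\Gamma,\Delta$ with the correct dependencies. Contraction (c?) is harmless: since $\Real{?A}{}{\tupl{y}}=\;?\exists\tupl{x}\,\Real{A}{\tupl{x}}{\tupl{y}}$ has empty positive content, one simply identifies the two challenge variables of the contracted copies (which only tightens the dependencies of the remaining witnesses) and then applies (c?).

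The main obstacle will be the choice axiom ($\lAC$), the sole place producing genuine computational content, and it must be verified under both interpretations of $\lp$. Under interpretation (b), where $\Real{\lp_\tau(t)}{x}{}:\equiv\emL{\con}_\tau(x,t)$, checking ($\lAC$) amounts to extracting from a constructing term for $\alpha$ an actual hereditarily computable witness $Y$ together with a constructing term for it, i.e.\ to building an associate and reading off its output; this is precisely what Lemmas~\ref{closedTermsAreCp}, \ref{le:associateFacts} and \ref{le:wit} were prepared for, and I would assemble them here, using item~\ref{leit:witIntroC} of Lemma~\ref{le:wit} to obtain the linear witness $z$ with $\emL{\wit}(x\cdot y,z)$ from $\lp^\bot(x)$, $\lp^\bot(y)$ and halting, and Lemma~\ref{closedTermsAreCp} to supply the accompanying constructing terms. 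Under interpretation (a), where $\Real{\lp_\tau(t)}{x}{}:\equiv\lp_\tau(t)\otimes\place(\sg(x))$, the same axiom must instead be threaded through the $\place$-tag discipline via (tag-0), (tag-con) and (tag-app): more bookkeeping but conceptually lighter. I expect this verification of ($\lAC$), and the consistent threading of the tag and selector variables through the additive and choice clauses, to be the substantive core, with all remaining cases a routine rule-by-rule induction.
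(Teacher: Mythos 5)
Your overall architecture coincides with the paper's: induction on the derivation, (cut) resolved by the acyclicity built into the free-variable condition, the additive rules handled by a selector variable $k$ with case-defined witnesses glued together via (dot-eq$_1$), (dot-eq$_2$), (dot-sub) and (c?), contraction by identifying the challenge variables of the two copies, and the nonlinear/embedded axioms being self-interpreting. All of that matches the paper's proof essentially step for step.

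However, there is a genuine gap exactly at the case you single out as the substantive core: the interpretation of ($\lAC$) under reading (b). Lemma~\ref{le:wit}~\ref{leit:witIntroC} cannot be used there, because its proof is an object-level derivation in $\ELPAomegal$ that itself \emph{uses} ($\lAC$) as an axiom -- the paper's remark after the definition of the interpretation even cites Lemma~\ref{le:wit}~c) as the motivating \emph{application} of ($\lAC$), not as a tool for justifying it. Inside the induction, appealing to it is circular: to exploit its conclusion in interpreted form you would first have to interpret its derivation, which contains the very axiom whose interpretation you are constructing. It is also the wrong kind of object: what the axiom case demands is a closed term $t$ together with a proof in $\ELPAomegaeq + \emL{\Gamma}$ of the interpreted sequent, i.e.\ of
\begin{equation*}
\fCenter \Real{(\forall x^0 \exists y^0\, \alpha x y =_0 0)^{\bot}}{}{},\ \exists Y^1\bigl(\forall x^0 (\alpha x (Yx) =_0 0) \otimes\, !\,(\emL{\con}(z, \alpha) \lto \emL{\con}(tz, Y))\bigr)
\end{equation*}
with $z$ the challenge variable -- a meta-level witnessing task, not a derivability claim. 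Lemma~\ref{le:associateFacts} does not fit either: it concerns terms of type $\comp{1}(1)$ and converts constructing terms into associates, and it belongs to the classical (``d) $\Rightarrow$ e)'') side of Theorem~\ref{thm:firstCharacterization}. The paper closes this case by a direct, self-contained construction: it defines an explicit closed term $t$ that searches through the outputs of the constructing term $z$ for the least-witness variant $\alpha'$ of $\alpha$, proves $\con(tz, Y')$ (hence $\con(z,\alpha) \to \con(tz, Y')$ together with $\forall x (\alpha x (Y'x) =_0 0)$) inside $\EPAomega$, and transports the result to the linear system by Lemma~\ref{lem:embedPeanoIntoLinear}. You would need to supply this construction; none of the three lemmas you cite can replace it. A further, minor, discrepancy: for interpretation (a) the paper does not route ($\lAC$) through (tag-0)/(tag-con)/(tag-app) at all; it derives $\vdash (\lp(\alpha) \lto \lp(Y))^{\bot},\ \lp(\alpha) \otimes \place(v) \lto \lp(Y) \otimes \place(v)$ from identity axioms and applies Lemma~\ref{lem:substitute} to the axiom, so the tag is simply passed through by the identity witness $\lambda v^0.v$.
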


\begin{remark}
	The first interpretation a) for $\lp$ will be used to apply tags to the linear predicates for the theorem of Section \ref{sec:Phase}. Since we are only interested in two different kinds of tags, we use $\sg$ in order to limit the information given to the tagging predicate $\place$ to the values $0$ and $1$.
	
	The second interpretation b) for $\lp$ will be used to extract the computational information that lies hidden within the proof of the implication from one Weihrauch problem to the other. This interpretation is inspired by that for the standard predicate in \cite{nonstandard}. While their interpretation of $\Real{\lp_{\tau}(t)}{x}{}$ would be $x = t$, ours has to be more indirect with $\con_{\tau}(x, t)$, i.e.~$x$ is a term that merely \emph{constructs} the term $t$, because we need to interpret the axiom $(\lAC)$, which is crucial for the characterization of \emph{computable} Weihrauch reducibility as could be seen during its application in the proof of Lemma \ref{le:wit} c).
\end{remark}

\begin{proof}
	We prove the theorem by induction on the length of derivations in $\ELPAomegaeq$.
	\begin{itemize}
		\item Nonlinear axioms including those from $\emL{\Gamma}$:
		
		Nonlinear formulas $\Anl$ have the property that their functional interpretation is identical to themselves, i.e.~$\Real{\Anl}{}{}\equiv \Anl$ holds. Therefore, the proofs of their functional interpretations are trivial.
		The rules (tag-con), (tag-app), (dot-eq$_1$), (dot-eq$_2$), ($\top$), and ($\bot$) fall into a similar category.
		
		\item Identity and structure:
		\begin{center}
			\begin{bprooftree}
				\AxiomC{}
				\RightLabel{(id)}
				\UnaryInf$\fCenter \Real{A}{\tupl{u}}{\tupl{v}}, (\Real{A}{\tupl{u}}{\tupl{v}})^{\bot}$
				\UnaryInf$\fCenter \Real{A}{\tupl{u}}{\tupl{v}}, \Real{A^{\bot}}{\tupl{v}}{\tupl{u}}$
			\end{bprooftree}
			\hskip 1.5em
			\begin{bprooftree}
				\Axiom$\fCenter \Real{G_1}{\tupl{g_1}}{\tupl{u_1}}, \dots, \Real{G_n}{\tupl{g_n}}{\tupl{u_n}}$
				\RightLabel{(per)}
				\UnaryInf$\fCenter \Real{G_{\pi1}}{\tupl{g_{\pi1}}}{\tupl{u_{\pi1}}}, \dots, \Real{G_{\pi n}}{\tupl{g_{\pi n}}}{\tupl{u_{\pi n}}}$
			\end{bprooftree}
		\end{center}
		\begin{prooftree}
			\Axiom$\fCenter \Real{G_1}{\tupl{g_1}}{\tupl{u_1}}, \dots,\Real{G_n}{\tupl{g_n}}{\tupl{u_n}}, \Real{A}{\tupl{a}}{\tupl{v}}$
			\RightLabel{$([\nicefrac{\tupl{b}}{\tupl{v}}])$}
			\UnaryInf$\fCenter \Real{G_1}{\tupl{g'_1}}{\tupl{u_1}}, \dots,\Real{G_n}{\tupl{g'_n}}{\tupl{u_n}}, \Real{A}{\tupl{a}}{\tupl{b}}$
			\Axiom$\fCenter \Real{A^{\bot}}{\tupl{b}}{\tupl{w}}, \Real{D_1}{\tupl{d_1}}{\tupl{z_1}}, \dots, \Real{D_m}{\tupl{d_m}}{\tupl{z_m}}$
			\UnaryInf$\fCenter (\Real{A}{\tupl{w}}{\tupl{b}})^{\bot}, \Real{D_1}{\tupl{d_1}}{\tupl{z_1}}, \dots, \Real{D_m}{\tupl{d_m}}{\tupl{z_m}}$
			\RightLabel{($[\nicefrac{\tupl{a}}{\tupl{w}}]$)}
			\UnaryInf$\fCenter (\Real{A}{\tupl{a}}{\tupl{b}})^{\bot}, \Real{D_1}{\tupl{d'_1}}{\tupl{z_1}}, \dots, \Real{D_m}{\tupl{d'_m}}{\tupl{z_m}}$
			\RightLabel{(cut)}
			\BinaryInf$\fCenter \Real{G_1}{\tupl{g'_1}}{\tupl{u_1}}, \dots,\Real{G_n}{\tupl{g'_n}}{\tupl{u_n}}, \Real{D_1}{\tupl{d'_1}}{\tupl{z_1}}, \dots, \Real{D_m}{\tupl{d'_m}}{\tupl{z_m}}$
		\end{prooftree}
		where  $\pi$ is an arbitrary permutation.
		\item Multiplicatives:
		\begin{prooftree}
			\Axiom$\fCenter \Real{G_1}{\tupl{g_1}}{\tupl{u_1}}, \dots, \Real{G_n}{\tupl{g_n}}{\tupl{u_n}}, \Real{A}{\tupl{a}}{\tupl{w}}$
			\RightLabel{$([\nicefrac{\tupl{x}\tupl{b}}{\tupl{w}}])$}
			\UnaryInf$\fCenter \Real{G_1}{\tupl{g'_1}}{\tupl{u_1}}, \dots, \Real{G_n}{\tupl{g'_n}}{\tupl{u_n}}, \Real{A}{\tupl{a}}{\tupl{x}\tupl{b}}$
			\Axiom$\fCenter \Real{D_1}{\tupl{d_1}}{\tupl{v_1}}, \dots, \Real{D_n}{\tupl{d_n}}{\tupl{v_n}}, \Real{B}{\tupl{b}}{\tupl{z}}$
			\RightLabel{($[\nicefrac{\tupl{y}\tupl{a}}{\tupl{z}}]$)}
			\UnaryInf$\fCenter \Real{D_1}{\tupl{d'_1}}{\tupl{v_1}}, \dots, \Real{D_n}{\tupl{d'_n}}{\tupl{v_n}}, \Real{B}{\tupl{b}}{\tupl{y}\tupl{a}}$
			\RightLabel{($\otimes$)}
			\BinaryInf$\fCenter \Real{G_1}{\tupl{g'_1}}{\tupl{u_1}}, \dots, \Real{G_n}{\tupl{g'_n}}{\tupl{u_n}}, \Real{D_1}{\tupl{d'_1}}{\tupl{v_1}}, \dots, \Real{D_n}{\tupl{d'_n}}{\tupl{v_n}},  \Real{A}{\tupl{a}}{\tupl{x}\tupl{b}} \otimes \Real{B}{\tupl{b}}{\tupl{y}\tupl{a}}$
			\UnaryInf$\fCenter \Real{G_1}{\tupl{g'_1}}{\tupl{u_1}}, \dots, \Real{G_n}{\tupl{g'_n}}{\tupl{u_n}}, \Real{D_1}{\tupl{d'_1}}{\tupl{v_1}}, \dots, \Real{D_n}{\tupl{d'_n}}{\tupl{v_n}},  \Real{A \otimes B}{\tupl{a}, \tupl{b}}{\tupl{x}, \tupl{y}}$
		\end{prooftree}
		\begin{center}
			\begin{bprooftree}
				\Axiom$\fCenter \Gamma, \Real{A}{\tupl{a}}{\tupl{u}}, \Real{B}{\tupl{b}}{\tupl{v}}$
				\RightLabel{($*$)}
				\UnaryInf$\fCenter \Gamma, \Real{A}{(\lambda \tupl{v}.\tupl{a})\tupl{v}}{\tupl{u}}, \Real{B}{(\lambda \tupl{u}.\tupl{b})\tupl{u}}{\tupl{v}}$
				\UnaryInf$\fCenter \Gamma, \Real{A \Par B}{(\lambda \tupl{v}.\tupl{a}), (\lambda \tupl{u}.\tupl{b})}{\tupl{u}, \tupl{v}}$
			\end{bprooftree}
		\end{center}
		where we use of Lemma \ref{lem:embedPeanoIntoLinear}, (dot-eq$_1$), and (dot-sub) on the facts $\tupl{a}_i =_{\tau} (\lambda \tupl{v}. \tupl{a})_i \tupl{v}$, in order to cut from $\Real{A}{\tupl{a}}{\tupl{u}}$ to $\Real{A}{(\lambda \tupl{v}.\tupl{a})\tupl{v}}{\tupl{u}}$ in ($*$). We apply the analogous argument to $\Real{B}{\tupl{b}}{\tupl{v}}$.
		\item Additives:
		
		We break the proof for the interpretation of the ($\with$) rule into three parts:
		\begin{prooftree}
			\Axiom$\fCenter \Real{G_1}{\tupl{g_1}}{\tupl{u_1}}, \dots, \Real{G_n}{\tupl{g_n}}{\tupl{u_n}}, \Real{A}{\tupl{a}}{\tupl{v}}$
			\RightLabel{($*$)}
			\UnaryInf$\fCenter \Real{G_1}{\tupl{t_1}\sg(0)\tupl{g_1}\tupl{g'_1}}{\tupl{u_1}}, \dots, \Real{G_n}{\tupl{t_n}\sg(0)\tupl{g_n}\tupl{g'_n}}{\tupl{u_n}}, \Real{A}{\tupl{a}}{\tupl{v}}$
			\RightLabel{($\dagger$)}
			\UnaryInf$\fCenter (! k \doteq_0 0)^{\bot}, \Real{G_1}{\tupl{t_1}\sg(k)\tupl{g_1}\tupl{g'_1}}{\tupl{u_1}}, \dots, \Real{G_n}{\tupl{t_n}\sg(k)\tupl{g_n}\tupl{g'_n}}{\tupl{u_n}}, \Real{A}{\tupl{a}}{\tupl{v}}$
			\RightLabel{($\Par$)}
			\UnaryInf$\fCenter \Real{G_1}{\tupl{t_1}\sg(k)\tupl{g_1}\tupl{g'_1}}{\tupl{u_1}}, \dots, \Real{G_n}{\tupl{t_n}\sg(k)\tupl{g_n}\tupl{g'_n}}{\tupl{u_n}}, ! k \doteq_0 0 \lto \Real{A}{\tupl{a}}{\tupl{v}}$
		\end{prooftree}
		where the tuples $\tupl{t_i}$ are defined such that $\tupl{t_i}\sg(0) \tupl{g_i} \tupl{g'_i} = \tupl{g_i}$ and $\tupl{t_i} 1 \tupl{g_i} \tupl{g'_i} = \tupl{g'_i}$ are provable $\EPAomega$. We transport this fact to $\ELPAomegaeq$ using Lemma \ref{lem:embedPeanoIntoLinear} and (dot-eq$_1$). We apply (dot-sub) to these equalities and cut with them in ($*$). In ($\dagger$), we cut with further applications of the (dot-sub) rule. Here, we also combine multiple occurrences of $(!k \doteq_0 0)^{\bot}$ into a single one using (c?).
		\begin{prooftree}
			\Axiom$\fCenter \Real{G_1}{\tupl{g'_1}}{\tupl{u_1}}, \dots, \Real{G_n}{\tupl{g'_n}}{\tupl{u_n}}, \Real{B}{\tupl{b}}{\tupl{w}}$
			\RightLabel{($*$)}
			\UnaryInf$\fCenter \Real{G_1}{\tupl{t_1}1\tupl{g_1}\tupl{g'_1}}{\tupl{u_1}}, \dots, \Real{G_n}{\tupl{t_n}1\tupl{g_n}\tupl{g'_n}}{\tupl{u_n}}, \Real{B}{\tupl{b}}{\tupl{w}}$
			\RightLabel{($\dagger$)}
			\UnaryInf$\fCenter (! \sg(k) \doteq_0 1)^{\bot}, \Real{G_1}{\tupl{t_1}\sg(k)\tupl{g_1}\tupl{g'_1}}{\tupl{u_1}}, \dots, \Real{G_n}{\tupl{t_n}\sg(k)\tupl{g_n}\tupl{g'_n}}{\tupl{u_n}}, \Real{B}{\tupl{b}}{\tupl{w}}$
			\RightLabel{(dot-eq$_2$*)}
			\UnaryInf$\fCenter (! k \not\doteq_0 0)^{\bot}, \Real{G_1}{\tupl{t_1}\sg(k)\tupl{g_1}\tupl{g'_1}}{\tupl{u_1}}, \dots, \Real{G_n}{\tupl{t_n}\sg(k)\tupl{g_n}\tupl{g'_n}}{\tupl{u_n}}, \Real{B}{\tupl{b}}{\tupl{w}}$
			\RightLabel{($\Par$)}
			\UnaryInf$\fCenter \Real{G_1}{\tupl{t_1}\sg(k)\tupl{g_1}\tupl{g'_1}}{\tupl{u_1}}, \dots, \Real{G_n}{\tupl{t_n}\sg(k)\tupl{g_n}\tupl{g'_n}}{\tupl{u_n}}, ! k \not\doteq_0 0 \lto \Real{B}{\tupl{b}}{\tupl{w}}$
		\end{prooftree}
		where both ($*$) and ($\dagger$) are similar to before. Furthermore, in (dot-eq$_2$*) we cut with (dot-eq$_2$).
		Finally, we combine both previous results using the ($\with$) rule in order to prove the following:
		\begin{prooftree}
			\Axiom$\fCenter \Real{G_1}{\tupl{t_1}\sg(k)\tupl{g_1}\tupl{g'_1}}{\tupl{u_1}}, \dots, \Real{G_n}{\tupl{t_n}\sg(k)\tupl{g_n}\tupl{g'_n}}{\tupl{u_n}}, (! k \doteq_0 0 \lto \Real{A}{\tupl{a}}{\tupl{v}}) \with (! k \not\doteq_0 0 \lto \Real{B}{\tupl{b}}{\tupl{w}})$
			\UnaryInf$\fCenter \Real{G_1}{\tupl{t_1}\sg(k)\tupl{g_1}\tupl{g'_1}}{\tupl{u_1}}, \dots, \Real{G_n}{\tupl{t_n}\sg(k)\tupl{g_n}\tupl{g'_n}}{\tupl{u_n}}, \Real{A \with B}{\tupl{a}, \tupl{b}}{\tupl{v}, \tupl{w}, k}$
		\end{prooftree}
		which concludes the proof. We continue with the interpretations of ($\oplus_1$) and ($\oplus_2$):
		\begin{center}
			\begin{bprooftree}
				\AxiomC{}
				\RightLabel{(refl)}
				\UnaryInf$\fCenter 0 =_0 0$
				\RightLabel{(dot-eq$_1$)}
				\UnaryInf$\fCenter ! 0 \doteq_0 0$
				\Axiom$\fCenter \Gamma, \Real{A}{\tupl{a}}{\tupl{u}}$
				\RightLabel{($\otimes$)}
				\BinaryInf$\fCenter \Gamma, ! 0 \doteq_0 0 \otimes \Real{A}{\tupl{a}}{\tupl{u}}$
				\RightLabel{($\oplus_1$)}
				\UnaryInf$\fCenter \Gamma, (! 0 \doteq_0 0 \otimes \Real{A}{\tupl{a}}{\tupl{u}}) \oplus (! 0 \not\doteq_0 0 \otimes \Real{B}{\tupl{b}}{\tupl{v}})$
				\UnaryInf$\fCenter \Gamma, \Real{A \oplus B}{\tupl{a}, \tupl{b}, 0}{\tupl{u}, \tupl{v}}$
			\end{bprooftree}
			\vskip 1em
			\begin{bprooftree}
				\AxiomC{}
				\RightLabel{(dot-succ)}
				\UnaryInf$\fCenter !1 \not\doteq_0 0$
				\Axiom$\fCenter \Gamma, \Real{B}{\tupl{b}}{\tupl{v}}$
				\RightLabel{($\otimes$)}
				\BinaryInf$\fCenter \Gamma, !1 \not\doteq_0 0 \otimes \Real{B}{\tupl{b}}{\tupl{v}}$
				\RightLabel{($\oplus_2$)}
				\UnaryInf$\fCenter \Gamma, (!1 \doteq_0 0 \otimes \Real{A}{\tupl{a}}{\tupl{u}}) \oplus (!1 \not\doteq_0 0 \otimes \Real{B}{\tupl{b}}{\tupl{v}})$
				\UnaryInf$\fCenter \Gamma, \Real{A \oplus B}{\tupl{a}, \tupl{b}, 1}{\tupl{u}, \tupl{v}}$
			\end{bprooftree}
		\end{center}
		\item Modalities:
		\begin{center}
			\begin{bprooftree}
				\Axiom$\fCenter \Real{?G_1}{}{\tupl{y}_1}, \dots, \Real{?G_n}{}{\tupl{y}_n}, \Real{A}{\tupl{a}}{\tupl{u}}$
				\UnaryInf$\fCenter ?\exists \tupl{x_1}\Real{G_1}{\tupl{x_1}}{\tupl{y}_1}, \dots, ?\exists\tupl{x_n}\Real{G_n}{\tupl{x_n}}{\tupl{y}_n}, \Real{A}{\tupl{a}}{\tupl{u}}$
				\RightLabel{($\forall$)}
				\UnaryInf$\fCenter ?\exists \tupl{x_1}\Real{G_1}{\tupl{x_1}}{\tupl{y}_1}, \dots, ?\exists\tupl{x_n}\Real{G_n}{\tupl{x_n}}{\tupl{y}_n}, \forall\tupl{u}\Real{A}{\tupl{a}}{\tupl{u}}$
				\RightLabel{($!$)}
				\UnaryInf$\fCenter ?\exists \tupl{x_1}\Real{G_1}{\tupl{x_1}}{\tupl{y}_1}, \dots, ?\exists\tupl{x_n}\Real{G_n}{\tupl{x_n}}{\tupl{y}_n}, !\forall\tupl{u}\Real{A}{\tupl{a}}{\tupl{u}}$
				\UnaryInf$\fCenter \Real{?G_1}{}{\tupl{y}_1}, \dots, \Real{?G_n}{}{\tupl{y}_n},\Real{!A}{\tupl{a}}{}$
			\end{bprooftree}
			\hskip 1.5em
			\begin{bprooftree}
				\Axiom$\fCenter \Gamma, \Real{A}{\tupl{a}}{\tupl{v}}$
				\RightLabel{($\exists$)}
				\UnaryInf$\fCenter \Gamma, \exists\tupl{u}\Real{A}{\tupl{u}}{\tupl{v}}$
				\RightLabel{(d$?$)}
				\UnaryInf$\fCenter \Gamma, ?\exists\tupl{u}\Real{A}{\tupl{u}}{\tupl{v}}$
				\UnaryInf$\fCenter \Gamma, \Real{?A}{}{\tupl{v}}$
			\end{bprooftree}
			\vskip 1em
			\begin{bprooftree}
				\Axiom$\fCenter \Gamma$
				\RightLabel{(w$?$)}
				\UnaryInf$\fCenter \Gamma, ?\exists\tupl{u}\Real{A}{\tupl{u}}{\tupl{v}}$
				\UnaryInf$\fCenter \Gamma, \Real{?A}{}{\tupl{v}}$
			\end{bprooftree}
			\hskip 1.5em
			\begin{bprooftree}
				\Axiom$\fCenter \Real{G_1}{\tupl{g_1}}{\tupl{u_1}}, \dots, \Real{G_n}{\tupl{g_n}}{\tupl{u_n}}, \Real{?A}{}{\tupl{y}}, \Real{?A}{}{\tupl{v}}$
				\RightLabel{($[\nicefrac{\tupl{y}}{\tupl{v}}]$)}
				\UnaryInf$\fCenter \Real{G_1}{\tupl{g'_1}}{\tupl{u_1}}, \dots, \Real{G_n}{\tupl{g'_n}}{\tupl{u_n}}, \Real{?A}{}{\tupl{y}}, \Real{?A}{}{\tupl{y}}$
				\RightLabel{($*$)}
				\UnaryInf$\fCenter \Real{G_1}{\tupl{g'_1}}{\tupl{u_1}}, \dots, \Real{G_n}{\tupl{g'_n}}{\tupl{u_n}}, ?\exists\tupl{x}\Real{A}{\tupl{x}}{\tupl{y}}, ?\exists\tupl{x}\Real{A}{\tupl{x}}{\tupl{y}}$
				\RightLabel{(c$?$)}
				\UnaryInf$\fCenter \Real{G_1}{\tupl{g'_1}}{\tupl{u_1}}, \dots, \Real{G_n}{\tupl{g'_n}}{\tupl{u_n}}, ?\exists\tupl{x}\Real{A}{\tupl{x}}{\tupl{y}}$
				\UnaryInf$\fCenter \Real{G_1}{\tupl{g'_1}}{\tupl{u_1}}, \dots, \Real{G_n}{\tupl{g'_n}}{\tupl{u_n}}, \Real{?A}{}{\tupl{y}}$
			\end{bprooftree}
		\end{center}
		where in ($*$) we may use (id), ($\exists$), ($\forall$), and (cut) in order to make sure that the existential quantifiers in the next line are identical.
		\item Quantifiers:
		\begin{center}
			\begin{bprooftree}
				\Axiom$\fCenter \Gamma, \Real{A}{\tupl{a}}{\tupl{u}}$
				\RightLabel{($\forall$)}
				\UnaryInf$\fCenter \Gamma, \forall x \Real{A}{\tupl{a}}{\tupl{u}}$
				\UnaryInf$\fCenter \Gamma, \Real{\forall x A}{\tupl{a}}{\tupl{u}}$
			\end{bprooftree}
			\hskip 1.5em
			\begin{bprooftree}
				\Axiom$\fCenter \Gamma, \Real{A[\nicefrac{t}{x}]}{\tupl{a}}{\tupl{u}}$
				\UnaryInf$\fCenter \Gamma, \Real{A}{\tupl{a}}{\tupl{u}}[\nicefrac{t}{x}]$
				\RightLabel{($\exists$)}
				\UnaryInf$\fCenter \Gamma, \exists x \Real{A}{\tupl{a}}{\tupl{u}}$
				\UnaryInf$\fCenter \Gamma, \Real{\exists x A}{\tupl{a}}{\tupl{u}}$
			\end{bprooftree}
		\end{center}
		\item Linear predicate:
		\begin{enumerate}[label=\alph*)]
		\item\mbox{}\\
		\begin{center}
			\begin{bprooftree}
				\AxiomC{}
				\RightLabel{($\lp$)}
				\UnaryInf$\fCenter \lp_{\tau}(t)$
				\AxiomC{}
				\RightLabel{(tag-0)}
				\UnaryInf$\fCenter \place(\sg(0))$
				\RightLabel{($\otimes$)}
				\BinaryInf$\fCenter \lp_{\tau}(t) \otimes \place(\sg(0))$
				\UnaryInf$\fCenter \Real{\lp_{\tau}(t)}{0}{}$
			\end{bprooftree}
			\vskip 1.5em
			\begin{bprooftree}
				\Axiom$\fCenter \Real{G_1}{\tupl{g_1}}{\tupl{u_1}}, \dots, \Real{G_n}{\tupl{g_n}}{\tupl{u_n}}, \Real{\lp^{\bot}_{\tau}(t)}{}{y}, \Real{\lp^{\bot}_{\tau}(t)}{}{v}$
				\UnaryInf$\fCenter \Real{G_1}{\tupl{g_1}}{\tupl{u_1}}, \dots, \Real{G_n}{\tupl{g_n}}{\tupl{u_n}}, (\lp_{\tau}(t) \otimes \place(\sg(y)))^{\bot}, (\lp_{\tau}(t) \otimes \place(\sg(v)))^{\bot}$
				\RightLabel{($*$)}
				\UnaryInf$\fCenter \Real{G_1}{\tupl{g_1}}{\tupl{u_1}}, \dots, \Real{G_n}{\tupl{g_n}}{\tupl{u_n}}, \lp^{\bot}_{\tau}(t), \place^{\bot}(\sg(y)), \lp^{\bot}_{\tau}(t), \place^{\bot}(\sg(v))$
				\RightLabel{($\lp$-con)}
				\UnaryInf$\fCenter \Real{G_1}{\tupl{g_1}}{\tupl{u_1}}, \dots, \Real{G_n}{\tupl{g_n}}{\tupl{u_n}}, \lp^{\bot}_{\tau}(t), \place^{\bot}(\sg(y)), \place^{\bot}(\sg(v))$
				\RightLabel{([$\nicefrac{y}{v}$])}
				\UnaryInf$\fCenter \Real{G_1}{\tupl{g'_1}}{\tupl{u_1}}, \dots, \Real{G_n}{\tupl{g'_n}}{\tupl{u_n}}, \lp^{\bot}_{\tau}(t), \place^{\bot}(\sg(y)), \place^{\bot}(\sg(y))$
				\RightLabel{(tag-con)}
				\UnaryInf$\fCenter \Real{G_1}{\tupl{g'_1}}{\tupl{u_1}}, \dots, \Real{G_n}{\tupl{g'_n}}{\tupl{u_n}}, \lp^{\bot}_{\tau}(t), \place^{\bot}(\sg(y))$
				\RightLabel{($\Par$)}
				\UnaryInf$\fCenter \Real{G_1}{\tupl{g'_1}}{\tupl{u_1}}, \dots, \Real{G_n}{\tupl{g'_n}}{\tupl{u_n}}, (\lp_{\tau}(t) \otimes \place(\sg(y)))^{\bot}$
				\UnaryInf$\fCenter \Real{G_1}{\tupl{g'_1}}{\tupl{u_1}}, \dots, \Real{G_n}{\tupl{g'_n}}{\tupl{u_n}}, \Real{\lp^{\bot}_{\tau}(t)}{}{y}$
			\end{bprooftree}
			\vskip 1.5em
			\begin{bprooftree}
				\AxiomC{}
				\RightLabel{(tag-app)}
				\UnaryInf$\fCenter \place^{\bot}(\sg(x)), \place^{\bot}(\sg(y)), \place(\sg(x+y))$
				\RightLabel{($\otimes$*)}
				\UnaryInf$\fCenter \lp_{\tau \rho}^{\bot}(t), \place^{\bot}(\sg(x)), \lp_{\rho}^{\bot}(r), \place^{\bot}(\sg(y)), \lp_{\tau}(tr) \otimes \place(\sg(x+y))$
				\RightLabel{($\Par$)}
				\UnaryInf$\fCenter \lp_{\tau \rho}^{\bot}(t) \Par \place^{\bot}(\sg(x)), \lp_{\rho}^{\bot}(r) \Par \place^{\bot}(\sg(y)), \lp_{\tau}(tr) \otimes \place(\sg(x+y))$
				\UnaryInf$\fCenter \Real{\lp^{\bot}_{\tau\rho}(t)}{}{x}, \Real{\lp^{\bot}_{\rho}(r)}{}{y}, \Real{\lp_{\tau}(tr)}{x + y}{}$
			\end{bprooftree}
		\end{center}	
		where in ($*$) we cut twice with the easily provable $\vdash \lp_{\tau}^{\bot}(t), \place^{\bot}(\sg(z)), \lp_{\tau}(t) \otimes \place(\sg(z))$ for $z := y$ and $z := v$, and in ($\otimes$*) we apply ($\otimes$) with the instance $\vdash \lp^{\bot}_{\tau\rho}(t), \lp^{\bot}_{\rho}(r), \lp_{\tau}(tr)$ of ($\lp$-app).
		\item\mbox{}\\
		\begin{center}
			\begin{bprooftree}
				\AxiomC{}
				\RightLabel{($*$)}
				\UnaryInf$\fCenter \emL{\con}_{\tau}(s^{\comp{\tau}}, t^{\tau})$
				\UnaryInf$\fCenter \Real{\lp_{\tau}(t)}{s}{}$
			\end{bprooftree}
			\vskip 1.5em
			\begin{bprooftree}
				\Axiom$\fCenter \Real{G_1}{\tupl{g_1}}{\tupl{u_1}}, \dots, \Real{G_n}{\tupl{g_n}}{\tupl{u_n}}, \Real{\lp^{\bot}_{\tau}(t)}{}{y}, \Real{\lp^{\bot}_{\tau}(t)}{}{v}$
				\UnaryInf$\fCenter \Real{G_1}{\tupl{g_1}}{\tupl{u_1}}, \dots, \Real{G_n}{\tupl{g_n}}{\tupl{u_n}}, (\emL{\con}(y, t))^{\bot}, (\emL{\con}(v, t))^{\bot}$
				\RightLabel{([$\nicefrac{y}{v}$])}
				\UnaryInf$\fCenter \Real{G_1}{\tupl{g'_1}}{\tupl{u_1}}, \dots, \Real{G_n}{\tupl{g'_n}}{\tupl{u_n}}, (\emL{\con}(y, t))^{\bot}, (\emL{\con}(y, t))^{\bot}$
				\RightLabel{($\dagger$)}
				\UnaryInf$\fCenter \Real{G_1}{\tupl{g'_1}}{\tupl{u_1}}, \dots, \Real{G_n}{\tupl{g'_n}}{\tupl{u_n}}, (\emL{\con}(y, t))^{\bot}$
				\UnaryInf$\fCenter \Real{G_1}{\tupl{g'_1}}{\tupl{u_1}}, \dots, \Real{G_n}{\tupl{g'_n}}{\tupl{u_n}}, \Real{\lp^{\bot}_{\tau}(t)}{}{y}$
			\end{bprooftree}
			\vskip 1.5em
			\begin{bprooftree}
				\AxiomC{}
				\RightLabel{($+$)}
				\UnaryInf$\fCenter (\emL{\con_{\tau\rho}}(y, t))^{\bot}, (\emL{\con_{\rho}}(v, r))^{\bot}, \emL{\con_{\tau}}(yv, tr)$
				\UnaryInf$\fCenter \Real{\lp^{\bot}_{\tau\rho}(t)}{}{y}, \Real{\lp^{\bot}_{\rho}(r)}{}{v}, \Real{\lp_{\tau}(tr)}{yv}{}$
			\end{bprooftree}
		\end{center}
		where in ($*$) we use Lemmas \ref{lem:embedPeanoIntoLinear} and \ref{closedTermsAreCp}, in ($\dagger$) we contract nonlinear formulas in the usual way, and in ($+$) we import this definition of constructing terms to $\ELPAomegal$ using Lemma \ref{lem:embedPeanoIntoLinear}.
		\end{enumerate}
		\item Linear axiom of choice:
		\begin{enumerate}[label=\alph*)]
			\item\mbox{}\\
			Consider the result of the following proof:
			\begin{prooftree}
				\AxiomC{}
				\RightLabel{(id)}
				\UnaryInf$\fCenter \lp(\alpha), \lp^{\bot}(\alpha)$
				\AxiomC{}
				\RightLabel{(id)}
				\UnaryInf$\fCenter \lp^{\bot}(Y), \lp(Y)$
				\RightLabel{($\otimes$)}
				\BinaryInf$\fCenter (\lp(\alpha) \lto \lp(Y))^{\bot}, \lp^{\bot}(\alpha), \lp(Y)$
				\AxiomC{}
				\RightLabel{(id)}
				\UnaryInf$\fCenter \place^{\bot}(v), \place(v)$
				\RightLabel{($\otimes$)}
				\BinaryInf$\fCenter (\lp(\alpha) \lto \lp(Y))^{\bot}, \lp^{\bot}(\alpha), \place^{\bot}(v), \lp(Y) \otimes \place(v)$
				\RightLabel{($\Par$)}
				\UnaryInf$\fCenter (\lp(\alpha) \lto \lp(Y))^{\bot}, (\lp(\alpha) \otimes \place(v))^{\bot}, \lp(Y) \otimes \place(v)$
				\RightLabel{($\Par$)}
				\UnaryInf$\fCenter (\lp(\alpha) \lto \lp(Y))^{\bot}, \lp(\alpha) \otimes \place(v) \lto \lp(Y) \otimes \place(v)$
			\end{prooftree}
			We apply the proven sequent via Lemma \ref{lem:substitute} to ($\lAC$), which yields
			\begin{multline*}
				\fCenter (\forall x^0 \exists y^0 \alpha x y =_0 0)^{\bot},\\
				\exists Y^1 (\forall x^0 (\alpha x (Yx) =_0 0) \otimes !(\lp(\alpha) \otimes \place(v) \lto \lp(Y) \otimes \place(v)))
			\end{multline*}
			and therefore
			\begin{multline*}
				\fCenter \Real{(\forall x^0 \exists y^0 \alpha x y =_0 0)^{\bot}}{}{},
				\Real{\exists Y^1 (\forall x^0 (\alpha x (Yx) =_0 0) \otimes !(\lp(\alpha)\lto \lp(Y)))}{\lambda v^0.v}{}
			\end{multline*}
			\item\mbox{}\\
			We have already shown that if we combine $\vdash (!(\lp(\alpha) \lto \lp(Y)))^{\bot}, 1$, which can be proven using ($1$) and (w?), with Corollary \ref{cor:substituteTensorPar} and ($\lAC$), we get an axiom of choice for terms:
			\begin{equation}
				\fCenter (\forall x^0 \exists y^0 \alpha x y =_0 0)^{\bot}, \exists Y^1 \forall x^0 (\alpha x (Yx) =_0 0)\period\tag{$*$}
			\end{equation}
			We assume, in $\EPAomega$, that $\forall x^0 \exists y^0 \alpha x y =_0 0$ holds and that $\con(z, \alpha')$ holds for some $z$ of appropriate type where $\alpha'xy =_0 0$ if and only if $y$ is the smallest such value with $\alpha x y =_0 0$. We use the following closed term:
			\begin{equation*}
				t :\equiv \lambda z, \lambda w^1, \lambda k^0. \left\{
				\begin{aligned}
					k_r + 1\phantom{0} &\text{ if $z w (\tilde{k_r}) k_l =_0 1$},\\
					0\phantom{k_r + 1} &\text{ otherwise.}
				\end{aligned}
				\right.
			\end{equation*}
			Let $Y'$ be a witness for $\alpha'$, i.e.~have the property $\alpha' x (Y'x) =_0 0$ for all $x^0$. Because of how we constructed $\alpha'$, there can only be one such $Y'$. By assuming $\con(w, x)$ for some $w^1$ and $x^0$, and showing $\con(tzw, Y'x)$, we will derive $\con(tz, Y')$. For this, let $k_r$ be a number such that $\alpha' x k_r =_0 0$ holds. Because of $\con(z, \alpha')$, there must be some $k_l$ with $z w (\tilde{k_r}) k_l =_0 1$. This implies $t z w k \neq_0 0$. Now, assume that the number $k_r$ is such that $t z w k =_0 k_r + 1$ holds. By definition, this implies $z w (\tilde{k_r}) k_l =_0 1$. Therefore, we have $\alpha' x k_r =_0 0$. Since $Y'$ is unique, we have $Y' x =_0 k_r$. This concludes the proof of $\con(tz, Y')$.
			
			Now, any witness $Y'$ for $\alpha'$ can also be a witness for $\alpha$. Moreover, from any witness $Y$ for $\alpha$, we can build a witness $Y'$ for $\alpha'$ by using $Y$ as bound for a search. We can therefore prove the following in $\EPAomega$:
			\begin{equation*}
				\forall x^0 \exists y^0 \alpha x y =_0 0 \vdash \exists {Y'}^1 (\forall x^0 (\alpha x (Y'x) =_0 0) \land (\con(z, \alpha) \to \con(tz, Y')))
			\end{equation*}
			where we used $Y'$ in the existential quantifier to indicate that we actually used the witness $Y'$ for $\alpha'$.
			We transport this proof to $\ELPAomegal$ using Lemma \ref{lem:embedPeanoIntoLinear} which produces a sequent that is identical to:
			\begin{equation*}
				\fCenter \Real{(\forall x^0 \exists y^0 \alpha x y =_0 0)^{\bot}}{}{}, \Real{\exists Y^1 (\forall x^0 (\alpha x (Yx) =_0 0) \otimes !(\lp(\alpha) \lto \lp(Y)))}{t}{}\period
			\end{equation*}
		\end{enumerate}
		\item Substitution for dot-equality:
		\begin{prooftree}
			\AxiomC{}
			\RightLabel{(dot-sub)}
			\UnaryInf$\fCenter (! x \doteq_{\tau} y)^{\bot}, (\Real{A}{\tupl{u}}{\tupl{v}})^{\bot}[\nicefrac{x}{z}], \Real{A}{\tupl{u}}{\tupl{v}}[\nicefrac{y}{z}]$
			\UnaryInf$\fCenter \Real{(! x \doteq_{\tau} y)^{\bot}}{}{}, \Real{A^{\bot}[\nicefrac{x}{z}]}{\tupl{v}}{\tupl{u}}, \Real{A[\nicefrac{y}{z}]}{\tupl{u}}{\tupl{v}}$
		\end{prooftree}
		\item Weakening (for $\EAPAomegaeq$):
		\begin{prooftree}
			\Axiom$\fCenter \Gamma$
			\RightLabel{(w)}
			\UnaryInf$\fCenter \Gamma, \Real{A}{\tupl{0}}{\tupl{v}}$
		\end{prooftree}
	\end{itemize}
\end{proof}

\begin{definition}[Linear quantifiers]
	We introduce the following abbreviations:
	\begin{align*}
		\forallL x^{\tau} A &:\equiv \forall x (\lp_{\tau}(x) \lto A)\\
		\existsL x^{\tau} A &:\equiv \exists x (\lp_{\tau}(x) \otimes A)\\
		\existsLeps x^{\tau} A &:\equiv \exists x (\lp_{\tau}(x) \otimes \epsilon =_0 0 \otimes A)
	\end{align*}
\end{definition}

\begin{remark}
	Both linear quantifiers $\forallL$ and $\existsL$ quantify only over \emph{linear} values, i.e.~ones that are visible to the functional interpretation.
	
	The special quantifier $\existsLeps$ is only of interest to our first theorem: This quantifier is ``enabled'' if and only if $\epsilon$ is equal to zero. Another way to look at it is that it carries some extra information and $\epsilon =_0 0$ stands for some arbitrary piece of such information. We use it to overcome the property of affine logic that allows us to assume premises without using them.
\end{remark}

\begin{theorem}\label{thm:firstCharacterization}
	Let $A(x^1)$, $B(x, y^1)$, $C(u^1)$, and $D(u, v^1)$ be formulas of $\EPAomega$ where $A$, $B$, $C$, and $D$ do not have any further free variables. Let $\Gamma$ be a set of formulas of the same language, and $\epsilon$ a variable different from $x$, $y$, $u$, and $v$.
	Consider the sequent
	\begin{equation}\tag{$*$}
		\fCenter \forallL x^1 (\emL{A}(x) \lto \existsLeps y^1 \emL{B}(x, y)) \lto \forallL u^1 (\emL{C}(u) \lto \existsLeps v^1 \emL{D}(u, v))\period
	\end{equation}
	The following are equivalent:
	\begin{enumerate}[label=\alph*)]
		\item $\ELPAomegal + \emL{\Gamma}$ proves ($*$).
		\item $\ELPAomegaeq + \emL{\Gamma}$ proves ($*$).
		\item $\EAPAomegal + \emL{\Gamma}$ proves ($*$).
		\item $\EAPAomegaeq + \emL{\Gamma}$ proves ($*$).
		\item $\EPAomega + \QFACnil + \Gamma$ proves both
		\begin{flalign*}
		&&&C(u) \to \halts{t \cdot u} \land A(t \cdot u)&\\
		\text{and }&&& C(u) \land B(t \cdot u, y) \to \halts{s \cdot \pair{u}{y}} \land D(u, s \cdot \pair{u}{y})&
		\end{flalign*}
		for some closed terms $t^1$ and $s^1$ of $\EPAomega$.\footnote{We read the formulas as $C(u) \to \halts{t \cdot u} \land (\wit(t \cdot u, x) \to A(x))$ and\\$C(u) \land \wit(t \cdot u, x) \land B(x, y) \to \halts{s \cdot \pair{u}{y}} \land (\wit(s \cdot \pair{u}{y}, v) \to D(u, v))$, respectively.}
	\end{enumerate}
\end{theorem}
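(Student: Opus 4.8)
The plan is to prove the five statements equivalent through the implications (a)$\Rightarrow$(b), (a)$\Rightarrow$(c), (b)$\Rightarrow$(d), (c)$\Rightarrow$(d), (d)$\Rightarrow$(e), and (e)$\Rightarrow$(a), which together place all five on a common cycle. The four implications among the calculi (a)--(d) are immediate. For (a)$\Rightarrow$(c) and (b)$\Rightarrow$(d), note that the affine systems $\EAPAomegal$ and $\EAPAomegaeq$ arise from their linear counterparts by replacing (w?) with the general rule (w), and (w) subsumes (w?) (it is the instance of (w) that introduces a formula of the shape $?A'$); hence every affine system proves at least what the corresponding linear one does. For (a)$\Rightarrow$(b) I would use the corollary stating that every $\ELPAomegal$-proof can be reproduced in $\ELPAomegaeq$, and (c)$\Rightarrow$(d) follows by the same argument in the affine setting, the additional restriction on ($!_2$) being harmless since the new predicates do not occur in the smaller language. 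It then remains to prove (d)$\Rightarrow$(e) and (e)$\Rightarrow$(a).

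For (d)$\Rightarrow$(e) I would apply Theorem \ref{Theorem:Dialectica} with interpretation b), under which $\Real{\lp_{\tau}(t)}{x}{} :\equiv \emL{\con}_{\tau}(x, t)$, to the given proof in $\EAPAomegaeq + \emL{\Gamma}$, obtaining a proof of the functional interpretation of the sequent in the same system. Since $A$, $B$, $C$, $D$ are nonlinear, their interpretations are trivial, so the only genuine realizers are two constructing terms: one for the instance $x$ fed to the hypothesis, depending on $u$, and one for the solution $v$ of the conclusion, depending on $u$ and the returned solution $y$ — the dependencies being fixed by the free-variable condition $\FV(\tupl{a}_i) \subseteq \{\dots\}$ of Theorem \ref{Theorem:Dialectica}. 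I would convert each such closed constructing term of type $\comp{1}(1)$ into an associate by Lemma \ref{le:associateFacts}\ref{leit:associateFactsReplaceConWithWit}, yielding the closed terms $t^1$ and $s^1$, and translate the realized interpretation back to $\EPAomega + \QFACnil + \Gamma$ by Lemma \ref{lem:embedLinearIntoPeano} (which extends to the affine system, its rule (w) translating to classically sound weakening). Here the parameter $\epsilon$ does its work: instantiating it so that the $\exists_{\epsilon}$ quantifiers are switched off (i.e.\ $\epsilon \neq_0 0$, so that their subformulas vanish) forces the extraction to produce $x = t \cdot u$ from $u$ alone and isolates the \emph{unconditional} first clause $C(u) \to \halts{t \cdot u} \land A(t \cdot u)$, while $\epsilon =_0 0$ gives the full second clause involving the backward map $s$; the $\halts{\cdot}$- and $\wit$-clauses are supplied by Lemma \ref{le:associateFacts}.

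For (e)$\Rightarrow$(a) I would assemble the sequent directly in the weakest system $\ELPAomegal + \emL{\Gamma}$. Embedding the two arithmetical implications of (e) through Lemma \ref{lem:embedPeanoIntoLinear} yields their linear forms, and the key bridge is Lemma \ref{le:wit}\ref{leit:witIntroC}, which turns the halting of an associate on linear inputs into an actual linear witness. Concretely, assuming the hypothesis problem and an instance $u$ with $\lp(u)$ and $\emL{C}(u)$: the first clause of (e) gives $\halts{t \cdot u}$, so using ($\lp$) for the closed term $t$ and Lemma \ref{le:wit}\ref{leit:witIntroC} I obtain a linear instance $x$ with $\emL{A}(x)$; feeding $x$ into the hypothesis returns a linear solution $y$ with $\emL{B}(x, y)$; the second clause of (e) then gives $\halts{s \cdot \pair{u}{y}}$, and Lemma \ref{le:wit}\ref{leit:witIntroC} once more produces a linear solution $v$ with $\emL{D}(u, v)$, whence the conclusion. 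The tag $\epsilon =_0 0$ carried inside $\existsLeps$ threads through — extracted from the hypothesis's $\existsLeps y$ and reused for the conclusion's $\existsLeps v$ — and the repeated negative occurrences of the linear predicate are absorbed by ($\lp$-con).

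The main obstacle is the extraction in (d)$\Rightarrow$(e). One must follow, through the whole Dialectica interpretation, which realizer depends on which challenge, confirm that the forward realizer is genuinely independent of the returned solution $y$ — so that $\halts{t \cdot u}$ holds under $C(u)$ alone — and correctly manage the $\epsilon$-parameter that encodes this separation. Matching the higher-type constructing terms to the associate formulation with the right dependency structure, and reconciling the halting and witness bookkeeping with the footnote reading of (e), is where the genuine care lies; the remaining steps are comparatively routine given Lemmas \ref{lem:embedPeanoIntoLinear}, \ref{lem:embedLinearIntoPeano}, \ref{le:associateFacts}, and \ref{le:wit}.
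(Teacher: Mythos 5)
Your proposal is correct and follows essentially the same route as the paper's own proof: the same cycle of implications, Theorem~\ref{Theorem:Dialectica}~b) combined with Lemma~\ref{le:associateFacts}~\ref{leit:associateFactsReplaceConWithWit} and Lemma~\ref{lem:embedLinearIntoPeano} for d)$\Rightarrow$e) --- including the key device of instantiating $\epsilon \neq_0 0$ to isolate the unconditional termination clause for the forward program and $\epsilon =_0 0$ for the full backward clause --- and Lemma~\ref{lem:embedPeanoIntoLinear} together with two applications of Lemma~\ref{le:wit}~\ref{leit:witIntroC} (with $\lp$-contraction absorbing duplicated negative linear predicates) for e)$\Rightarrow$a). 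Your explicit observation that Lemma~\ref{lem:embedLinearIntoPeano} extends to the affine system via the classical soundness of (w) addresses a point the paper leaves implicit, but otherwise the two arguments coincide.
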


\begin{proof}\mbox{}
	\begin{itemize}
		\item ``a) $\Rightarrow$ b)'' / ``a) $\Rightarrow$ c)'' / ``b) $\Rightarrow$ d)'' / ``c) $\Rightarrow$ d)'': We can copy any proof to a calculus with more rules.
		\item ``d) $\Rightarrow$ e)'': Assume that $\EAPAomegaeq + \emL{\Gamma}$ proves
		\begin{equation*}
		\fCenter \forallL x^1 (\emL{A}(x) \lto \existsLeps y^1 \emL{B}(x, y)) \lto \forallL u^1 (\emL{C}(u) \lto \existsLeps v^1 \emL{D}(u, v))\period
		\end{equation*}
		We apply Theorem \ref{Theorem:Dialectica} b) and yield terms $a$ and $b$ such that the following holds:
		\begin{equation*}
		\fCenter \Real{\forallL x^1 (\emL{A}(x) \lto \existsLeps y^1 \emL{B}(x, y)) \lto \forallL u^1 (\emL{C}(u) \lto \existsLeps v^1 \emL{D}(u, v))}{a, b}{f, g}\period
		\end{equation*}
		In the next few steps, we will make this more explicit by applying the definitions of our interpretation. In order to do this, we remove all abbreviations:
		\begin{multline*}
		\fCenter \Real{\exists x^1 (\lp(x) \otimes \emL{A}(x) \otimes \forall y^1 (\lp^{\bot}(y) \Par \epsilon \neq_0 0 \Par {\emL{B}}^{\bot}(x, y))) \\\Par \forall u^1 (\lp^{\bot}(u) \Par {\emL{C}}^{\bot}(u) \Par \exists v^1 (\lp(v) \otimes \epsilon =_0 0 \otimes \emL{D}(u, v)))}{a, b}{f, g}
		\end{multline*}
		We apply the definition of our interpretation for the first connective ($\Par$ in this case).
		\begin{multline*}
		\fCenter \Real{\exists x^1 (\lp(x) \otimes \emL{A}(x) \otimes \forall y^1 (\lp^{\bot}(y) \Par \epsilon \neq_0 0 \Par {\emL{B}}^{\bot}(x, y)))}{ag}{f} \\\Par \Real{\forall u^1 (\lp^{\bot}(u) \Par {\emL{C}}^{\bot}(u) \Par \exists v^1 (\lp(v) \otimes \epsilon =_0 0 \otimes \emL{D}(u, v)))}{bf}{g}\period
		\end{multline*}
		Since our interpretation is ``blind'' with respect to quantifiers and formulas that do not contain the linear predicate, we yield the following:
		\begin{multline*}
		\fCenter \exists x^1 (\Real{\lp(x)}{ag}{} \otimes \emL{A}(x) \otimes \forall y^1 ((\Real{\lp(y)}{f(ag)}{})^{\bot} \Par \epsilon \neq_0 0 \Par {\emL{B}}^{\bot}(x, y))) \\\Par \forall u^1 ((\Real{\lp(u)}{g}{})^{\bot} \Par {\emL{C}}^{\bot}(u) \Par \exists v^1 (\Real{\lp(v)}{bfg}{} \otimes \epsilon =_0 0 \otimes \emL{D}(u, v)))\period
		\end{multline*}
		Finally, we apply the interpretation of the linear predicate:
		\begin{multline*}
		\fCenter \exists x^1 (\emL{\con}(ag, x) \otimes \emL{A}(x) \otimes \forall y^1 (\emL{\con}(f(ag), y)^{\bot} \Par \epsilon \neq_0 0 \Par {\emL{B}}^{\bot}(x, y))) \\\Par \forall u^1 ((\emL{\con}(g, u))^{\bot} \Par {\emL{C}}^{\bot}(u) \Par \exists v^1 (\emL{\con}(bfg, v) \otimes \epsilon =_0 0 \otimes \emL{D}(u, v)))\period
		\end{multline*}
		Now, we use Lemma \ref{lem:embedLinearIntoPeano}
		in order to switch from linear logic to classical logic. We use this opportunity to simplify the formula:
		\begin{multline*}
		\fCenter \forall x^1 (\con(ag, x) \land A(x) \to \exists y^1 (\con(f(ag), y) \land \epsilon =_0 0 \land B(x, y))) \\\to \forall u^1 (\con(g, u) \land C(u) \to \exists v^1 (\con(bfg, v) \land \epsilon =_0 0 \land D(u, v)))\period
		\end{multline*}
		We remove both quantifiers $\exists y^1$ (this only strengthens the premise) and $\forall u^1$. Moreover, we replace $f$ and $g$ by $\lambda k^{1(1)}. \tilde{y}$ and $\tilde{u}$ respectively. Now, since with Lemma \ref{closedTermsAreCp} we know both $\con(\tilde{y}, y)$ and $\con(\tilde{u}, u)$, we can also remove both $\con(f(ag), y)$ and $\con(g, u)$.
		\begin{multline}\tag{$\dagger$}
		\fCenter \forall x^1 (\con(a\tilde{u}, x) \land A(x) \to \epsilon =_0 0 \land B(x, y)) \\\to (C(u) \to \exists v^1 (\con(b(\lambda k^{1(1)}. \tilde{y})\tilde{u}, v) \land \epsilon =_0 0 \land D(u, v)))\period
		\end{multline}
		We will use this sequent for two different values of $\epsilon$. We start with $\epsilon := 1$. Some simplification yields
		\begin{equation*}
		\forall x^1 (\con(a\tilde{u}, x) \land A(x) \to \bot), C(u) \vdash \bot\comma
		\end{equation*}
		which, using the law of excluded middle, lets us prove
		\begin{equation*}
		C(u) \vdash \exists x^1 (\con(a\tilde{u}, x) \land A(x))\period
		\end{equation*}
		If we apply Lemma~\ref{le:associateFacts}~\ref{leit:associateFactsReplaceConWithWit}, we can find a closed term $t$ and show the first sequent of our claim (using the fact that $x$ with $\con(a\tilde{u}, x)$ are unique):
		\begin{equation*}
		C(u) \vdash \halts{t \cdot u} \land (\wit(t \cdot u, x) \to A(x))\period
		\end{equation*}
		In order to prove the second sequent of our claim, we reuse ($\dagger$), this time with $\epsilon := 0$. We want to derive the left hand side, i.e.~$\forall x^1 (\con(a\tilde{u}, x) \land A(x)\to B(x, y))$, from the assumptions $\wit(t \cdot u, x)$ and $B(x, y)$. Therefore, we assume the latter formulas together with $\con(a\tilde{u}, x')$ and $A(x')$. Lemma~\ref{le:associateFacts}~\ref{leit:associateFactsReplaceConWithWit} tells us that $x$ and $x'$ are equal since both $\wit(t \cdot u, x)$ and $\con(a\tilde{u}, x')$ hold. We conclude $B(x', y)$. In summary, we can derive the following sequent:
		\begin{equation*}
		\wit(t \cdot u, x), B(x, y), \con(a\tilde{u}, x'), A(x') \vdash B(x', y)\period
		\end{equation*}
		Some applications of ($\to$R) and ($\forall$R) yield
		\begin{equation*}
		\wit(t \cdot u, x), B(x, y) \vdash \forall x (\con(a\tilde{u}, x) \land A(x) \to B(x, y))\period
		\end{equation*}
		If we cut this with ($\dagger$) for $\epsilon := 0$ (and rearrange some things), we get the following:
		\begin{equation*}
		C(u), \wit(t \cdot u, x), B(x, y) \vdash \exists v^1 (\con(b(\lambda k^{1(1)}. \tilde{y})\tilde{u}, v) \land D(u, v))\period
		\end{equation*}
		Finally, if we apply Lemma~\ref{le:associateFacts}~\ref{leit:associateFactsReplaceConWithWit} to this, we can produce a closed term $s$ and prove the second sequent of our claim:
		\begin{equation*}
		C(u), \wit(t \cdot u, x), B(x, y) \vdash \halts{s \cdot \pair{u}{y}} \land (\wit(s \cdot \pair{u}{y}, v) \to D(u, v))\period
		\end{equation*}
		\item ``e) $\Rightarrow$ a)'':
		Assume that $\EPAomega + \QFACnil + \Gamma$ proves both formulas from e). We apply Lemma~\ref{lem:embedPeanoIntoLinear} and can therefore prove the following sequents in $\ELPAomegal + \emL{\Gamma}$:
		\begin{align*}
		&\fCenter {\emL{C}}^{\bot}(u), \emL{(\halts{t \cdot u})}\comma\tag{i}\\
		&\fCenter {\emL{C}}^{\bot}(u), {\emL{\wit}}^{\bot}(t \cdot u, x), \emL{A}(x)\comma\tag{ii}\\
		&\fCenter {\emL{C}}^{\bot}(u), {\emL{\wit}}^{\bot}(t \cdot u, x), {\emL{B}}^{\bot}(x, y), \emL{(\halts{s \cdot \pair{u}{y}})}\comma\tag{iii}\\
		&\fCenter {\emL{C}}^{\bot}(u), {\emL{\wit}}^{\bot}(t \cdot u, x), {\emL{B}}^{\bot}(x, y), {\emL{\wit}}^{\bot}(s \cdot \pair{u}{y}, v), \emL{D}(u, v)\period\tag{iv}
		\end{align*}
		We start with (iv). We apply (id) and ($\otimes$) for both $\lp(v)$ and $\epsilon =_0 0$, use ($\Par$), ($\exists$), and ($\forall$) in order to derive
		\begin{multline*}
		\fCenter {\emL{C}}^{\bot}(u), {\emL{\wit}}^{\bot}(t \cdot u, x), (\epsilon =_0 0)^{\bot}, {\emL{B}}^{\bot}(x, y),\\ (\existsL v\ \emL{\wit}(s \cdot \pair{u}{y}, v))^{\bot}, \existsLeps v \emL{D}(u, v)\period
		\end{multline*}
		We cut this with an instance of Lemma~\ref{le:wit}~\ref{leit:witIntroC}:
		\begin{multline*}
		\fCenter \lp^{\bot}(u), {\emL{C}}^{\bot}(u), {\emL{\wit}}^{\bot}(t \cdot u, x), \lp^{\bot}(y), (\epsilon =_0 0)^{\bot}, {\emL{B}}^{\bot}(x, y),\\ {\emL{(\halts{s \cdot \pair{u}{y}})}}^{\bot}, \existsLeps v \emL{D}(u, v)
		\end{multline*}
		Here, we split $\lp(\pair{u}{y})$ into $\lp(j)$, $\lp(u)$, and $\lp(y)$ by cutting with ($\lp$-app) twice, and we used that both $s$ and $j$ are closed terms. We cut the sequent with (iii). Recall that we can contract nonlinear formulas that occur twice using (d$?$), (c$?$), and a cut with ($!_2$).
		\begin{equation*}
		\fCenter \lp^{\bot}(u), {\emL{C}}^{\bot}(u), {\emL{\wit}}^{\bot}(t \cdot u, x), \lp^{\bot}(y), (\epsilon =_0 0)^{\bot}, {\emL{B}}^{\bot}(x, y), \existsLeps v \emL{D}(u, v)
		\end{equation*}
		With ($\Par$) twice and ($\forall$) we can prove
		\begin{equation*}
		\fCenter \lp^{\bot}(u), {\emL{C}}^{\bot}(u), {\emL{\wit}}^{\bot}(t \cdot u, x), (\existsLeps y \emL{B}(x, y))^{\bot}, \existsLeps v \emL{D}(u, v)\period
		\end{equation*}
		We apply ($\otimes$) to this sequent with (ii). Similar to before, we can contract both occurrences of ${\emL{C}}^{\bot}(u)$ and ${\emL{\wit}}^{\bot}(t \cdot u, x)$:
		\begin{equation*}
		\fCenter \lp^{\bot}(u), {\emL{C}}^{\bot}(u), {\emL{\wit}}^{\bot}(t \cdot u, x), (\emL{A}(x) \lto \existsLeps y \emL{B}(x, y))^{\bot}, \existsLeps v \emL{D}(u, v)\period
		\end{equation*}
		We use ($\otimes$) for a second time, this time with the axiom $\fCenter \lp^{\bot}(x), \lp(x)$:
		\begin{multline*}
		\fCenter \lp^{\bot}(u), {\emL{C}}^{\bot}(u), \lp^{\bot}(x), {\emL{\wit}}^{\bot}(t \cdot u, x),\\ (\lp^{\bot}(x) \Par (\emL{A}(x) \lto \existsLeps y \emL{B}(x, y)))^{\bot}, \existsLeps v \emL{D}(u, v)
		\end{multline*}
		Some applications of ($\exists$), ($\Par$), and ($\forall$) yield
		\begin{multline*}
		\fCenter \lp^{\bot}(u), {\emL{C}}^{\bot}(u), (\existsL x\ \emL{\wit}(t \cdot u, x))^{\bot},\\ (\forallL x (\emL{A}(x) \lto \existsLeps y \emL{B}(x, y)))^{\bot}, \existsLeps v \emL{D}(u, v)
		\end{multline*}
		We cut with an instance of Lemma~\ref{le:wit}~\ref{leit:witIntroC} for a second time. Again, we can cut $\lp^{\bot}(t)$ away since $t$ is closed, and contract both ensuing occurrences of $\lp^{\bot}(u)$.
		\begin{equation*}
		\fCenter \lp^{\bot}(u), {\emL{C}}^{\bot}(u), {\emL{(\halts{t \cdot u})}}^{\bot}, (\forallL x (\emL{A}(x) \lto \existsLeps y \emL{B}(x, y)))^{\bot}, \existsLeps v \emL{D}(u, v)
		\end{equation*}
		We cut this sequent with (i) and contract both occurrences of ${\emL{C}}^{\bot}(u)$:
		\begin{equation*}
		\fCenter \lp^{\bot}(u), {\emL{C}}^{\bot}(u), (\forallL x (\emL{A}(x) \lto \existsLeps y \emL{B}(x, y)))^{\bot}, \existsLeps v \emL{D}(u, v)
		\end{equation*}
		Finally, applications of ($\Par$) and ($\forall$) lead us to our claim
		\begin{equation*}
		\fCenter \forallL x (\emL{A}(x) \lto \existsLeps y \emL{B}(x, y)) \lto \forallL u (\emL{C}(u) \lto \existsLeps v \emL{D}(u, v))\period
		\end{equation*}
	\end{itemize}
\end{proof}

\section{Phase Spaces and the Extraction of Argument Paths}\label{sec:Phase}

In this section, we will improve our previous result: We replace both quantifiers $\existsLeps$ by $\existsL$, which makes the characterization more symmetrical and natural. Even if the verifying system was linear, the functional interpretation ultimately constructs terms for a classical system where the information for the termination of the first Weihrauch program is not retrievable anymore. In order to show that this program does indeed halt, as long as we can provide the proof in linear instead of affine logic, we employ a semantics defined by Girard that rejects the weakening rule: \emph{phase semantics}. We will use a simplified version that suffices for our purposes. Girard's original definitions and results, including the more general Soundness Proposition, can be found in \cite[pp.~17--28]{girard1987}.

\begin{definition}[Phase space]
	We call the multiplicative monoid $P := \{0, 1\}$ together with its set of \emph{antiphases} $\bot := \{1\}$ a \emph{phase space}.
\end{definition}
For our purposes, this particular phase space suffices. The more general definition requires $P$ to be any commutative monoid and $\bot$ to be any of its subsets.

\begin{definition}
	For any subset $Q$ of $P$, we define
	\begin{equation*}
	Q^{\bot} := \{p \in P: \forall q \in Q \ pq \in \bot\}\period
	\end{equation*}
\end{definition}
In our case, we have: $\emptyset^{\bot} = \{0, 1\}$, $\{0\}^{\bot} = \emptyset$, $\{1\}^{\bot} = \{1\}$, and $\{0, 1\}^{\bot} = \emptyset$.

\begin{definition}
	A subset $Q$ of $P$ with the property $Q^{\bot\bot} = Q$ is called a \emph{fact}. The elements of $Q$ are the \emph{phases} of $Q$, and $Q$ is a \emph{valid fact} iff $1$ is an element of $Q$.
\end{definition}
In our setting, we have three facts: $\emptyset$, $\{1\}$, and $\{0,1\}$. Only $\{1\}$ and $\{0,1\}$ are valid facts. For $\{0\}$, we have $\{0\}^{\bot\bot} = \emptyset^{\bot} = \{0, 1\} \neq \{0\}$. Therefore, $\{0\}$ is not a fact.
In order to talk about the standard model of arithmetic in all finite types, we introduce a variable assignment that takes a variable of some type and gives an object of the same type.

\begin{definition}[Variable assignment]
	We call a function
	\begin{equation*}
	\beta: \text{Var} \to \bigcup_{\tau} T^{\tau}
	\end{equation*}
	where for every type $\tau$ the set $T^{\tau}$ consists of all closed terms in $\EPAomega$ of that type a \emph{variable assignment} if it maps every variable $x^{\tau}$ of type $\tau$ to a closed term $\beta(x)$ of this very type. We may also write $\beta(t^{\tau})$ for the closed term where all variables in $t$ are replaced by the values that $\beta$ maps them to.
\end{definition}

For the semantics of linear logic without modalities, it suffices to consider phase spaces. For modalities, Girard uses \emph{topolinear spaces} in his original work (cf.~\cite[p.~25]{girard1987}) although there seems to be a revised definition of phase semantics that captures modalities in a simplified way without introducing topolinear spaces (cf.~\cite[p.~198]{girardNeu}). In order to keep our proof simple, we will avoid topolinear spaces altogether, and define slightly different semantics that already capture everything we need for our purposes.

\begin{definition}[Simple phase semantics for $\ELPAomegaeq$]
	We define a way to map every sequent of $\ELPAomegaeq$ to one of the three facts. We start by defining the semantics for atomic formulas and some variable assignment $\beta$:
	\begin{alignat*}{3}
	&\Sem{\Aat}{\beta} &&:= \{1\} \text{ for $\Aat \in \{1, s =_0 t, \place(t) \text{ with $\beta(t) = 0$}\}$}\comma\\
	&\Sem{\place(t)}{\beta} &&:= \emptyset \text{ or  $\{0, 1\}$ for $\beta(t) > 0$}\comma\\
	&\Sem{\top}{\beta} &&:= \{0, 1\}\comma\\
	&\Sem{s \doteq_0 t}{\beta} &&:= \left\{
	\begin{aligned}
	\{0, 1\}\phantom{\emptyset} & \text{ if $\beta(s) = \beta(t)$}\comma\\
	\emptyset\phantom{\{0, 1\}} & \text{ otherwise}\comma\\
	\end{aligned}\right.\\
	&\Sem{\Aat^{\bot}}{\beta} &&:= \Sem{\Aat}{\beta}^{\bot} \text{ for atomic $\Aat$}\period\\
	\intertext{We assume that $\Sem{A}{\beta}$ and $\Sem{B}{\beta}$ have already been defined for some formulas $A$ and $B$:}
	&\Sem{A \otimes B}{\beta} &&:= \{pq: p \in \Sem{A}{\beta} \text{ and } q \in \Sem{B}{\beta}\}\comma\\
	&\Sem{A \with B}{\beta} &&:= \Sem{A}{\beta} \cap \Sem{B}{\beta}\comma\\
	&\Sem{?A}{\beta} &&:= \Sem{A}{\beta} \cup \{1\}\comma\\
	&\Sem{\forall x^{\tau} A}{\beta} &&:= \bigcap_{s \in S^{\tau}} \Sem{A}{\beta[\nicefrac{s}{x}]}\comma\\
	&\Sem{A \Par B}{\beta} &&:= \Sem{A^{\bot} \otimes B^{\bot}}{\beta}^{\bot}\comma\\
	&\Sem{A \oplus B}{\beta} &&:= \Sem{A^{\bot} \with B^{\bot}}{\beta}^{\bot}\comma\\
	&\Sem{!A}{\beta} &&:= \Sem{?A^{\bot}}{\beta}^{\bot}\comma\\
	&\Sem{\exists x^{\tau} A}{\beta} &&:= \Sem{\forall x^{\tau} A^{\bot}}{\beta}^{\bot}\period
	\end{alignat*}
\end{definition}

\begin{remark}
	Similar to Lemma \ref{Lemma:InvolutionInterpretation}, we can easily show $\Sem{A^{\bot}}{\beta} = \Sem{A}{\beta}^{\bot}$ for all formulas $A$ and variable assignments $\beta$.
\end{remark}

Notice how the semantics $\Sem{\place(t)}{\beta}$ for $\beta(t) > 0$ are only restricted to the facts $\emptyset$ or $\{0, 1\}$ for now. Both possible definitions are sound, at least if they are fixed beforehand and not changed during the Soundness proof, and we will need both of them during the proof of our final theorem.

\begin{definition}
	Let $\fCenter A_1, \dots, A_n$ be a sequent. We say that this sequent holds semantically (with respect to $P$), if and only if for each variable assignment $\beta$ one of the two following properties holds:
	\begin{itemize}
		\item $\Sem{A_i}{\beta} = \{1\}$ for all indices $i$ with $1 \leq i \leq n$,
		\item $\Sem{A_i}{\beta} = \{0,1\}$ for some index $i$ with $1 \leq i \leq n$.
	\end{itemize}
	In this case, we write $\Vdash A_1, \dots, A_n$.
\end{definition}
\begin{remark}
	It can easily be seen that one of the two properties holds if and only if the set $\bigcap_{\beta}\Sem{A_1 \Par \dots \Par A_n}{\beta}$ contains $1$. While this is the more general definition that also works for phase spaces different from our particular $P$ (cf.~Definition 1.14 in \cite[p.~22]{girard1987}), we chose the previous one because it will keep the Soundness proof shorter.
\end{remark}

The following lemma is a simplified version of Girard's Soundness result (cf.~Proposition 1.16 in \cite[p.~23]{girard1987}) for $\ELPAomegaeq$:

\begin{lemma}\label{Lemma:PhaseSoundness}
	Let $\Gamma$ be a set of formulas such that $\EPAomega + \Gamma + \QFACnil$ is consistent.	If $\ELPAomegaeq + \emL{\Gamma}$ proves the sequent $\vdash \Delta$, then it holds semantically with respect to $P$, i.e.~$\Vdash \Delta$.
\end{lemma}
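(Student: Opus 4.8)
The plan is to argue by induction on the length of the derivation of $\vdash \Delta$ in $\ELPAomegaeq + \emL{\Gamma}$, showing at each step that the end-sequent holds semantically. First I would record a few facts about the concrete space $P = \{0,1\}$. By the Remark following the definition of $\Vdash$, a sequent $\vdash A_1, \dots, A_n$ holds semantically exactly when $1 \in \bigcap_{\beta} \Sem{A_1 \Par \dots \Par A_n}{\beta}$; this reduces every goal to membership of $1$ in a single fact and lets me reason purely with the three facts $\emptyset, \{1\}, \{0,1\}$. I would then prove, by induction on formulas, the \emph{duality lemma} $\Sem{A^{\bot}}{\beta} = \Sem{A}{\beta}^{\bot}$ together with the statement that every $\Sem{A}{\beta}$ is one of these three facts, and a \emph{substitution lemma} $\Sem{A[\nicefrac{t}{x}]}{\beta} = \Sem{A}{\beta[\nicefrac{\beta(t)}{x}]}$ asserting that the value depends only on the denotations assigned to the free variables. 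Finally I would tabulate $\otimes$, $\Par$ and $(\cdot)^{\bot}$ on the three facts, observing that $\{1\}$ is the $\Par$-unit, that $\{0,1\}$ is $\Par$-absorbing, and that $\Par$ is order-preserving in each argument; these observations carry essentially all of the routine cases.

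For the induction itself, the embedded axioms $\emL{\Gamma}$ and the embedded axioms of $\EPAomega$ are nonlinear and built solely from equality atoms, so they denote $\{1\}$ and are valid outright. The identity axiom and $(\one), (\top), (\bot)$ are immediate from the tables, and $(\text{per})$ and $(\Par)$ hold because $\Par$ is commutative and associative on facts. The $(\otimes)$-rule and $(\text{cut})$ I would verify by a short finite case distinction over the values of the contexts and of the principal/cut formula, using $\Sem{A}{\beta} \cdot \Sem{A^{\bot}}{\beta} \subseteq \bot$. The additive rules follow from monotonicity of $\Par$ once one checks the inclusions between the facts denoted by $A \with B$, $A \oplus B$ and those denoted by $A$ and $B$. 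For the exponentials, every $\Sem{?A}{\beta} = \Sem{A}{\beta} \cup \{1\}$ contains $1$, so $(\text{d}?), (\text{w}?), (\text{c}?)$ reduce to monotonicity and the unit/absorbing behaviour, while promotion $(!)$ splits into the case where some $?$-context is absorbing and the case where all $?$-contexts equal $\{1\}$, in which the premise forces $1 \in \Sem{A}{\beta}$ and hence $1 \in \Sem{!A}{\beta}$. The quantifier rules use that $\Sem{\forall x A}{\beta} = \bigcap_{s} \Sem{A}{\beta[\nicefrac{s}{x}]}$ (the eigenvariable condition keeping the context constant in $x$) and that $\Sem{\exists x A}{\beta}$ is the bidual of the corresponding union, so $(\exists)$ is again monotonicity. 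The axioms $(\lp), (\lp\text{-app}), (\lp\text{-con})$ are trivial once every $\lp_{\tau}(t)$ is assigned $\{1\}$ like the equality atoms, and $(\lAC)$ evaluates to $\{1\}$ on both sides. The tagging axioms and $(\text{tag-con})$ are checked by cases on whether the relevant arguments denote $0$, and remain sound under either admissible value of $\Sem{\place(t)}{\beta}$ for $\beta(t) > 0$, which is what later permits using both choices.

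The one genuinely delicate step, and the place where the consistency hypothesis enters, is the rule $(\text{dot-eq}_1)$ with premise $\vdash s =_{\tau} t$. Its premise holds semantically for free (equality atoms always denote $\{1\}$), so validity of the conclusion $\vdash {!}s \doteq_{\tau} t$ cannot be read off the premise's semantics; instead I must exploit that the premise was actually \emph{derived}. Applying Lemma \ref{lem:embedLinearIntoPeano} to that subderivation yields $\EPAomega + \QFACnil + \Gamma \vdash s =_{\tau} t$, hence $\vdash \beta(s) =_{\tau} \beta(t)$ for every closed-term assignment $\beta$. Were $\beta(s)$ and $\beta(t)$ distinct as numerals (at the base type reached by unfolding the higher-type equality), $\EPAomega$ would already prove them unequal, contradicting the assumed consistency of $\EPAomega + \Gamma + \QFACnil$; therefore $\beta(s) = \beta(t)$, so $\Sem{s \doteq_{\tau} t}{\beta} = \{0,1\}$ and $\Sem{{!}s \doteq_{\tau} t}{\beta} = \{1\}$, which is valid. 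I expect this interplay — reconstructing a genuine equality of denotations from a mere derivation of an equation, licensed by consistency — to be the main obstacle. By contrast $(\text{dot-sub})$ and $(\text{dot-eq}_2)$ are sound as outright axioms (using the substitution lemma, and the fact that a false dot-equality makes its negation absorbing), so they require no appeal to consistency.
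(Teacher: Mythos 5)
Your proposal is correct and takes essentially the same route as the paper's proof: induction on the length of the derivation, routine case checks over the three facts $\emptyset$, $\{1\}$, $\{0,1\}$, and --- the key point --- the consistency hypothesis entering only at (dot-eq$_1$), where one cannot argue from the semantics of the premise $\vdash s =_{\tau} t$ (which is always $\{1\}$) but must instead transport its derivation to $\EPAomega + \QFACnil + \Gamma$ via Lemma~\ref{lem:embedLinearIntoPeano} and use consistency to conclude that the closed instances agree in the standard model, exactly as the paper does. The only differences are cosmetic (you package the routine cases into duality/substitution/monotonicity lemmas rather than checking them inline) together with two omitted but trivial cases, (dot-succ) and ($!_2$), which are immediately handled by the same $\{1\}$-valuation argument you already use for nonlinear formulas.
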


\begin{proof}
	We will prove by induction on the length of the proof for $\vdash \Delta$ that $\Vdash \Delta$ holds. For the first steps, we will assume that $\beta$ is an arbitrary variable assignment.
	\begin{itemize}
		\item Axioms from $\emL{\Gamma}$: The translation from Definition \ref{Definition:Embedding} only produces formulas $A$ with $\Sem{A}{\beta} = \{1\}$. We therefore immediately have $\Vdash A$.
		\item (id): Assume that one of $\Sem{A}{\beta}$ and $\Sem{A^{\bot}}{\beta}$ is different from $\{1\}$. Then, w.l.o.g., we can assume that $\Sem{A}{\beta} = \{0, 1\}$ holds.
		\item (cut): If one of the sequents $\Gamma$ or $\Delta$ contains a formula $B$ with $\Sem{B}{\beta} = \{0, 1\}$, we are finished. Otherwise, if $\Sem{A}{\beta} = \{0, 1\}$, we conclude $\Sem{A^{\bot}}{\beta} = \emptyset$ and, therefore, that $\Delta$ must contain some $D$ with $\Sem{D}{\beta} = \{0, 1\}$. An analogous argument works for $\Sem{A}{\beta} = \emptyset$. Finally, suppose $\Sem{A}{\beta} = \{1\}$. In this case, all formulas $B$ in $\Gamma, \Delta$ must have the property $\Sem{B}{\beta} = \{1\}$.
		\item (per): This is trivial, since neither of our conditions depends on the order of formulas.
		\item ($\otimes$): If one of the sequents $\Gamma$ and $\Delta$ contains a formula $C$ with $\Sem{C}{\beta} = \{0, 1\}$, we are finished. Otherwise, we know that both $\Sem{A}{\beta}$ and $\Sem{B}{\beta}$ must contain $1$. Should any of them be equal to $\{0, 1\}$, then we have $\Sem{A \otimes B}{\beta} = \{0, 1\}$. Finally, should both be equal to $\{1\}$, then we also have $\Sem{A \otimes B}{\beta} = \{1\}$ and the same holds for all $\Sem{C}{\beta}$ for formulas $C$ in $\Gamma, \Delta$.
		\item ($\Par$): If the sequent contains a formula $C$ with $\Sem{C}{\beta} = \{0, 1\}$, then we are finished: This is clear if $C$ lives in $\Gamma$, and should any of $A$ and $B$ have this property, then $\Sem{A \Par B}{\beta} = \Sem{A^{\bot} \otimes B^{\bot}}{\beta}^{\bot} = \emptyset^{\bot} = \{0, 1\}$ holds. Finally, if for all $C$ in the sequent we have $\Sem{C}{\beta} = \{1\}$, then we conclude $\Sem{A \Par B}{\beta} = \Sem{A^{\bot} \otimes B^{\bot}}{\beta}^{\bot} = \{1\}^{\bot} = \{1\}$.
		\item (1): It follows directly, since $\Sem{1}{\beta} = \{1\}$ holds.
		\item ($\bot$): The addition of $\bot$ to the sequent keeps the invariant intact, since $\Sem{\bot}{\beta} = \{1\}$ holds.
		\item ($\with$): If any formula $C$ in $\Gamma$ has the property $\Sem{C}{\beta} = \{0, 1\}$, we are finished. Otherwise, if there exists such a $C$ with $\Sem{C}{\beta} = \emptyset$, then we can conclude both $\Sem{A}{\beta} = \{0, 1\}$ and $\Sem{B}{\beta} = \{0, 1\}$. This yields $\Sem{A \with B}{\beta} = \{0, 1\} \cap \{0, 1\} = \{0, 1\}$. Finally, if all formulas $C$ in $\vdash \Gamma, A, B$ have $\Sem{C}{\beta} = \{1\}$, then the semantics for all formulas in $\vdash \Gamma, A \with B$ contains $1$.
		\item ($\oplus$): Since the semantics of $\oplus$ are commutative, we only consider ($\oplus_1$). If $\Sem{C}{\beta} = \{0, 1\}$ holds for any $C$ in $\Gamma$, we are finished. Otherwise, if $\Sem{A}{\beta} = \{0, 1\}$, then we have $\Sem{A \oplus B}{\beta} = \Sem{A^{\bot} \with B^{\bot}}{\beta}^{\bot} = (\emptyset \cap \Sem{B^{\bot}}{\beta})^{\bot} = \emptyset^{\bot} = \{0, 1\}$. Finally, should all formulas $C$ in $\vdash \Gamma, A$ have the property $\Sem{C}{\beta} = \{1\}$, then the evaluation of all formulas in $\vdash \Gamma, A \oplus B$ includes $1$ since $\Sem{A \oplus B}{\beta} = \Sem{A^{\bot} \with B^{\bot}}{\beta}^{\bot} = (\{1\} \cap \Sem{B^{\bot}}{\beta})^{\bot} \supseteq \{1\}^{\bot} = \{1\}$.
		\item ($\top$): The addition of $\top$ to any sequent immediately establishes the invariant, since $\Sem{\top}{\beta} = \{0, 1\}$ holds.
		\item (!): All formulas $B$ in $?\Gamma$ have the property $\Sem{B}{\beta} \supseteq \{1\}$. Should any of these be equal to $\{0, 1\}$, then we are finished. Otherwise, we know that $\Sem{A}{\beta} \supseteq \{1\}$ must hold. This yields $\Sem{!A}{\beta} = \Sem{?A^{\bot}}{\beta}^{\bot} = \{1\}^{\bot} = \{1\}$.
		\item (d?): Because of $\Sem{?A}{\beta} = \Sem{A}{\beta} \cup \{1\}$, this rule cannot break the invariant.
		\item (w?): Since we have $\Sem{?A}{\beta} \supseteq \{1\}$, the addition of $?A$ to our sequent keeps the invariant intact.
		\item (c?): Both properties still hold after removing one duplicate formula from the sequent.
		\item ($\forall$): Our three facts have the property $\emptyset \subset \{1\} \subset \{0, 1\}$. Hence, for each variable assignment $\beta$ there exists some $s \in S^{\tau}$ with $\Sem{A}{\beta[\nicefrac{s}{x^{\tau}}]} = \bigcap_{s \in S^{\tau}} \Sem{A}{\beta[\nicefrac{s}{x}]} = \Sem{\forall x^{\tau} A}{\beta}$. Since $x$ does not occur anywhere else freely in the sequent, the semantics of $\vdash \Gamma, \forall x^{\tau} A$ with respect to $\beta$ are identical to that of $\vdash \Gamma, A$ with respect to $\beta[\nicefrac{s}{x}]$. We can therefore conclude $\Vdash \Gamma, \forall x^{\tau} A$ from $\Vdash \Gamma, A$.
		\item ($\exists$): Under the induction hypothesis, the invariant stays intact if the set given by the semantics of some formula gets bigger: $\Sem{\exists x^{\tau} A}{\beta} = \Sem{\forall x^{\tau} A^{\bot}}{\beta}^{\bot} \supseteq \Sem{A^{\bot}}{\beta[\nicefrac{\beta(t)}{x}]}^{\bot} = \Sem{A}{\beta[\nicefrac{\beta(t)}{x}]} = \Sem{A[\nicefrac{t}{x}]}{\beta}$. In the last equation, we used that $t$ is free for $x$ in $A$.
		\item Axioms for equality, projectors, combinators, recursors, induction, ($\lp$), ($!_2$), ($\lp$-app), and ($\lAC$) all consist of formulas $A$ with $\Sem{A}{\beta} = \{1\}$ for all $\beta$. Notice that ($!_2$) is restricted to nonlinear formulas that do neither contain dot-equality nor the tagging predicate.
		\item (dot-succ): $\Sem{!Sx \not\doteq_0 0}{\beta} = \{1\}$.
		\item (dot-eq$_1$): If we can prove $\vdash s =_{\tau} t$ in $\ELPAomegaeq + \emL{\Gamma}$, we can transform it into a derivation of the same sequent in $\EPAomega + \QFACnil + \Gamma$ via Lemma \ref{lem:embedLinearIntoPeano}. The equality $s\tupl{x} =_0 t\tupl{x}$ is provable for any substitution of the free variables by closed terms where $\tupl{x}$ is a tuple of fresh variables such that both $s\tupl{x}$ and $t\tupl{x}$ are of type $0$. These equalities of numbers and closed terms must also hold in the standard model, since our system is consistent by assumption. We conclude $\Sem{s\tupl{x} \doteq_0 t\tupl{x}}{\beta} = \{0,1\}$ for all $\beta$ and therefore $\Sem{!s \doteq_{\tau} t}{\beta} = \{1\}$.
		\item (dot-eq$_2$): Assume $\beta(k) = 0$. Then we have $\Sem{k \not\doteq_0 0}{\beta} = \emptyset$, $\Sem{!k \not\doteq_0 0}{\beta} = \emptyset$, and $\Sem{(!k \not\doteq_0 0)^{\bot}}{\beta} = \{0, 1\}$. This means that the second case of our invariant holds. Otherwise, if $\beta(k)$ is different from zero, we have $\beta(\sg(k)) = 1$, $\Sem{\sg(k) \doteq_0 1}{\beta} = \{0, 1\}$, and $\Sem{!\sg(k) \doteq_0 1}{\beta} = \{1\}$. Moreover, $\Sem{k \not\doteq_0 0}{\beta} = \{0, 1\}$, $\Sem{!k \not\doteq_0 0}{\beta} = \{1\}$, and $\Sem{(!k \not\doteq_0 0)^{\bot}}{\beta} = \{1\}$. This means that the first case of our invariant holds.
		\item (dot-sub): Assume that $x \neq y$ holds with respect to $\beta$. Then we have $\Sem{(!x \doteq_{\tau} y)^{\bot}}{\beta} = \Sem{?(x \doteq y)^{\bot}}{\beta} = \{0, 1\}$. Otherwise, in the case of $x = y$, we conclude $\Sem{A^{\bot}[\nicefrac{x}{z}]}{\beta} = (\Sem{A[\nicefrac{y}{z}]}{\beta})^{\bot}$. Now, the argument is really similar to (id): If $\Sem{A[\nicefrac{y}{z}]}{\beta}$ is equal to $\emptyset$ or to $\{0, 1\}$, we are immediately finished. Finally, if both $\Sem{A[\nicefrac{y}{z}]}{\beta}$ and $\Sem{A^{\bot}[\nicefrac{x}{z}]}{\beta}$ are equal to $\{1\}$, we have to show that the semantics of the equality contain $1$: $\Sem{(!x \doteq_{\tau} y)^{\bot}}{\beta} = \Sem{?(x \doteq y)^{\bot}}{\beta} \supseteq \{1\}$.
		\item ($\lp$-con) and (tag-con): Contracting any two duplicate formulas in a sequent keeps the invariant intact.
		\item (tag-0): By definition $\Sem{\place(0)}{\beta} = \{1\}$.
		\item (tag-app): Assume $\beta(x) = \beta(y) = 0$. In this case, we have $\Sem{\place^{\bot}(\sg(x))}{\beta} = \Sem{\place^{\bot}(\sg(y))}{\beta} = \Sem{\place(\sg(x + y))}{\beta} = \{1\}$. Otherwise, w.l.o.g. let $\beta(x) > 0$. This implies $\beta(\sg(x)) = \beta(\sg(x + y)) = 1$. We conclude that either $\Sem{\place^{\bot}(\sg(x))}{\beta}$ or $\Sem{\place(\sg(x + y))}{\beta}$ is equal to $\{0, 1\}$.
	\end{itemize}
\end{proof}

\begin{theorem}\label{thm:secondCharacterization}
	Let $A(x^1)$, $B(x, y^1)$, $C(u^1)$, and $D(u, v^1)$ be formulas of $\EPAomega$ where $A$, $B$, $C$, and $D$ do not have any further free variables. Let $\Gamma$ be a set of formulas of the same language. Then the following are equivalent:
	\begin{enumerate}[label=\alph*)]
		\item $\ELPAomegal + \emL{\Gamma}$ proves the sequent
		\begin{equation*}
		\fCenter \forallL x^1 (\emL{A}(x) \lto \existsL y^1 \emL{B}(x, y)) \lto \forallL u^1 (\emL{C}(u) \lto \existsL v^1 \emL{D}(u, v))\period
		\end{equation*}
		\item $\EPAomega + \QFACnil + \Gamma$ proves both
		\begin{flalign*}
		&&&C(u) \to \halts{t \cdot u} \land A(t \cdot u)&\\
		&\text{and }&& C(u) \land B(t \cdot u, y) \to \halts{s \cdot \pair{u}{y}} \land D(u, s \cdot \pair{u}{y})&
		\end{flalign*}
		for some closed terms $t^1$ and $s^1$ of $\EPAomega$.
	\end{enumerate}
\end{theorem}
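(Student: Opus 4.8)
The plan is to route both implications through Theorem~\ref{thm:firstCharacterization}. Write $\Phi$ for the sequent in a) and $\Phi_{\epsilon}$ for the analogous sequent with $\existsLeps$ replacing each $\existsL$, i.e.\ the sequent of Theorem~\ref{thm:firstCharacterization}. Since b) is literally statement e) of Theorem~\ref{thm:firstCharacterization}, and that theorem equates e) with provability of $\Phi_{\epsilon}$, it suffices to prove that $\Phi$ and $\Phi_{\epsilon}$ are interprovable over $\ELPAomegal + \emL{\Gamma}$. (If $\EPAomega + \QFACnil + \Gamma$ is inconsistent then both a) and b) hold trivially, so I may assume consistency for the phase-semantic step below.)

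The implication from $\Phi_{\epsilon}$ to $\Phi$ is the routine one. A proof of $\Phi_{\epsilon}$ has $\epsilon$ free, so I substitute $\epsilon := 0$ throughout it; every conjunct $\epsilon =_0 0$ then becomes $0 =_0 0$. Because both $\vdash 0 =_0 0$ and $\vdash (0 =_0 0)^{\bot}, 1$ are provable, Corollary~\ref{cor:substituteTensorPar} lets me delete these conjuncts from the positively occurring $\existsLeps v$ and from the negatively occurring $\existsLeps y$ alike, collapsing $\Phi_{\epsilon}$ to $\Phi$. Combined with Theorem~\ref{thm:firstCharacterization}, this already gives b) $\Rightarrow$ a).

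For the converse I lift the given proof of $\Phi$ to $\ELPAomegaeq + \emL{\Gamma}$ and apply Theorem~\ref{Theorem:Dialectica} with the tagging reading a) of the linear predicate. This produces a provable interpreted sequent in which each of the four linear quantifiers of $\Phi$ carries a tag $\place(\sg(\cdot))$: the tags of the negatively occurring predicates, coming from $\existsL y$ and $\forallL u$, are free challenge variables, whereas the tags of the positively occurring predicates, coming from $\forallL x$ and $\existsL v$, are witnessed by extracted terms. The crux is then to feed this tagged sequent into the phase Soundness Lemma~\ref{Lemma:PhaseSoundness}. Because $\Phi$ is assumed provable in the genuinely linear calculus, where the weakening rule that would break phase soundness is unavailable, the interpreted sequent holds semantically in the two-element phase space $\{0,1\}$; evaluating it there, and exploiting the freedom to read $\Sem{\place(t)}{\beta}$ for $\beta(t) > 0$ as either $\emptyset$ or $\{0,1\}$, I can force the witnessed tag of $\existsL v$ to agree with the input tag of $\existsL y$, and the two universal tags to agree with each other and be complementary to the existential one.

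This coincidence of the two existential tags is exactly the structural datum needed to reintroduce the shared parameter $\epsilon$: making the common existential tag explicit turns the tag side-condition of each existential quantifier into the clause $\epsilon =_0 0$, so that the provable $\Phi$ is converted into a proof of $\Phi_{\epsilon}$ (the negative occurrence supplying $\epsilon \neq_0 0 = (\epsilon =_0 0)^{\bot}$ exactly where the positive occurrence must produce $\epsilon =_0 0$, which an instance of (id) discharges). Theorem~\ref{thm:firstCharacterization} then returns the closed terms $t$ and $s$ and the two sequents of b). I expect the phase-semantic step to be the main obstacle: one must arrange the two $\place$-valuations so that Lemma~\ref{Lemma:PhaseSoundness} genuinely pins the existential tags together, and then check that substituting this common tag transforms the provable formula $\Phi$ into an honest proof of $\Phi_{\epsilon}$ rather than a mere statement about its functional interpretation — and it is precisely here that restricting a) to the linear, non-affine system is indispensable.
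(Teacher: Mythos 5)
Your proposal follows the paper's own proof in all essentials: for a) $\Rightarrow$ b) you take exactly the paper's route --- lift to $\ELPAomegaeq + \emL{\Gamma}$, apply the tagging interpretation of Theorem \ref{Theorem:Dialectica} a), use the phase-semantic Soundness Lemma \ref{Lemma:PhaseSoundness} (under the harmless consistency assumption) with the two admissible valuations of $\place$ on positive arguments to pin the positive tags against the freely chosen negative ones, reinterpret the common existential tag as the clause $\epsilon =_0 0$, and invoke Theorem \ref{thm:firstCharacterization}. The only deviation is in b) $\Rightarrow$ a), where the paper re-runs the proof of ``e) $\Rightarrow$ a)'' of Theorem \ref{thm:firstCharacterization} with the $\epsilon =_0 0$ conjuncts omitted, while you instead take provability of the $\epsilon$-sequent from Theorem \ref{thm:firstCharacterization} as a black box, substitute $\epsilon := 0$, and strip the resulting $0 =_0 0$ conjuncts via Corollary \ref{cor:substituteTensorPar} --- a slightly more modular but equally valid handling of that direction.
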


\begin{proof}
	The direction ``b) $\Rightarrow$ a)'' works like Theorem \ref{thm:firstCharacterization} ``e) $\Rightarrow$ a)'' if we omit the introduction of $\epsilon =_0 0$. For ``a) $\Rightarrow$ b)'', assume that $\ELPAomegal + \emL{\Gamma}$ proves
	\begin{equation*}
		\fCenter \forallL x^1 (\emL{A}(x) \lto \existsL y^1 \emL{B}(x, y)) \lto \forallL u^1 (\emL{C}(u) \lto \existsL v^1 \emL{D}(u, v))\comma
	\end{equation*}
	we also know that $\ELPAomegaeq + \emL{\Gamma}$ proves the same sequent. We can, therefore, apply Theorem \ref{Theorem:Dialectica} a), where we interpret $\Real{\lp_{\tau}(t^{\tau})}{x^0}{} :\equiv \lp_{\tau}(t) \otimes \place(\sg(x))$. This yields terms $a$ and $b$ such that we can show the following sequent in $\ELPAomegaeq + \Gamma$:
	\begin{equation*}
	\fCenter \Real{\forallL x^1 (\emL{A}(x) \lto \existsL y^1 \emL{B}(x, y)) \lto \forallL u^1 (\emL{C}(u) \lto \existsL v^1 \emL{D}(u, v))}{a, b}{f, g}
	\end{equation*}
	Except for the types, the following steps are identical to those of the proof for the direction ``b) $\Rightarrow$ c)'' of Theorem \ref{thm:firstCharacterization}. We replace all abbreviations
	\begin{multline*}
	\fCenter \Real{\exists x^1 (\lp(x) \otimes \emL{A}(x) \otimes \forall y^1 (\lp^{\bot}(y) \Par {\emL{B}}^{\bot}(x, y))) \\\Par \forall u^1 (\lp^{\bot}(u) \Par {\emL{C}}^{\bot}(u) \Par \exists v^1 (\lp(v) \otimes \emL{D}(u, v)))}{a, b}{f, g}
	\end{multline*}
	and apply the definition of our interpretation:
	\begin{multline*}
	\fCenter \Real{\exists x^1 (\lp(x) \otimes \emL{A}(x) \otimes \forall y^1 (\lp^{\bot}(y) \Par {\emL{B}}^{\bot}(x, y)))}{ag}{f} \\\Par \Real{\forall u^1 (\lp^{\bot}(u) \Par {\emL{C}}^{\bot}(u) \Par \exists v^1 (\lp(v) \otimes \emL{D}(u, v)))}{bf}{g}
	\end{multline*}
	Since our interpretation is ``blind'' with respect to quantifiers and formulas that do not contain the linear predicate, our sequent is identical to the following:
	\begin{multline*}
	\fCenter \exists x^1 (\Real{\lp(x)}{ag}{} \otimes \emL{A}(x) \otimes \forall y^1 ((\Real{\lp(y)}{f(ag)}{})^{\bot} \Par {\emL{B}}^{\bot}(x, y))) \\\Par \forall u^1 ((\Real{\lp(u)}{g}{})^{\bot} \Par {\emL{C}}^{\bot}(u) \Par \exists v^1 (\Real{\lp(v)}{bfg}{} \otimes \emL{D}(u, v)))
	\end{multline*}
	Now, we put in the interpretation of the linear predicate:
	\begin{multline*}
	\fCenter \exists x^1 (\lp(x) \otimes \place(\sg(ag)) \otimes \emL{A}(x) \otimes \forall y^1 (\lp^{\bot}(y) \Par \place^{\bot}(\sg(f(ag))) \Par {\emL{B}}^{\bot}(x, y))) \\\Par \forall u^1 (\lp^{\bot}(u) \Par \place^{\bot}(\sg(g)) \Par {\emL{C}}^{\bot}(u) \Par \exists v^1 (\lp(v) \otimes \place(\sg(bfg)) \otimes \emL{D}(u, v)))
	\end{multline*}
	We replace the variable $f$ by $\lambda k^0. 1$ and $g$ by $0$. Moreover, we introduce the abbreviations $a' :\equiv \sg(a0)$ and $b' :\equiv \sg(b(\lambda k^0.1)0)$.
	
	Since we can prove the equalities $\sg(0) =_0 0$ and $\sg((\lambda k^0. 1)(ag)) =_0 1$ in $\EPAomega$, we can replace $\place^{\bot}(\sg(0))$ and $\place^{\bot}(\sg((\lambda k^0. 1)(ag)))$ in our sequent by $\place^{\bot}(0)$ and $\place^{\bot}(1)$, respectively, using Lemma \ref{lem:embedPeanoIntoLinear}, (dot-eq$_1$), and (dot-sub).
	\begin{multline}
	\fCenter \exists x^1 (\lp(x) \otimes \place(a') \otimes \emL{A}(x) \otimes \forall y^1 (\lp^{\bot}(y) \Par \place^{\bot}(1) \Par {\emL{B}}^{\bot}(x, y))) \\\Par \forall u^1 (\lp^{\bot}(u) \Par \place^{\bot}(0) \Par {\emL{C}}^{\bot}(u) \Par \exists v^1 (\lp(v) \otimes \place(b') \otimes \emL{D}(u, v)))\tag{$\dagger$}
	\end{multline}
	For now, we are only interested in semantics, i.e.~we apply Lemma \ref{Lemma:PhaseSoundness}. If\\$\EPAomega + \QFACnil + \emL{\Gamma}$ is not consistent, which is a requirement for this lemma, then the claim b) follows immediately by \emph{ex falso quodlibet}.
	
	We know, that all formulas that neither contain dot-equality nor the tag predicate evaluate to $\{1\}$. Similarly, the semantics of $\place^{\bot}(0)$ result in the same set $\{1\}$. Furthermore, this set acts as a neutral element with respect to the connectives $\otimes$ and $\Par$. Finally, the quantifiers do nothing, since the subformulas evaluate to the same sets regardless of any variable assignments. Hence, we can simplify the formula by a lot:
	\begin{equation*}
	\Vdash (\place(a') \otimes \place^{\bot}(1)) \Par \place(b')
	\end{equation*}
	Now comes the heart of the proof: We want to show that $a'$ and $b'$ evaluate to $0$ and $1$, respectively. If we can show this, we know that the derivation of the sequent happened in very particular way. We show this by case distinction. This is possible, since we did not fix the semantics of $\place(x)$ for positive $x$.
	
	There are only four possibilities for the values of $a'$ and $b'$ since both of them have $\sg$ as outermost term which can only map to $0$ or $1$:
	\begin{itemize}
		\item $a' = 0$ and $b' = 0$: In this case, we choose $\Sem{\place(x)}{\beta} = \{0, 1\}$ for $x > 0$. This implies $\Sem{\place(a') \otimes \place^{\bot}(1)}{\beta} = \emptyset$ and, because of $\Sem{\place(b')}{\beta} = \{1\}$, also $\Sem{(\place(a') \otimes \place^{\bot}(1)) \Par \place(b')}{\beta} = \emptyset$.
		\item $a' = 1$ and $b' = 0$: Here, both choices $\emptyset$ and $\{0, 1\}$ for $\Sem{\place(x)}{\beta}$ with $x > 0$ would work. We choose $\emptyset$: Similar to above, we have $\Sem{\place(a') \otimes \place^{\bot}(1)}{\beta} = \emptyset$ and, because of $\Sem{\place(b')}{\beta} = \{1\}$, also $\Sem{(\place(a') \otimes \place^{\bot}(1)) \Par \place(b')}{\beta} = \emptyset$.
		\item $a' = 1$ and $b' = 1$: Finally, we choose $\Sem{\place(x)}{\beta} = \emptyset$ for $x > 0$. This yields $\Sem{\place(a') \otimes \place^{\bot}(1)}{\beta} = \emptyset$ and, because of $\Sem{\place(b')}{\beta} = \emptyset$, also $\Sem{(\place(a') \otimes \place^{\bot}(1)) \Par \place(b')}{\beta} = \emptyset$.
	\end{itemize}
	We conclude that the only possibility is that $a'$ evaluates to $0$ and $b'$ evaluates to $1$.
	Since we can prove these properties in $\EPAomega$, we can transport them via Lemma \ref{lem:embedPeanoIntoLinear} and (dot-eq$_1$) to $\ELPAomegaeq$, and using (dot-sub) we can therefore derive the following from ($\dagger$):
	\begin{multline*}
	\fCenter \exists x^1 (\lp(x) \otimes \place(0) \otimes \emL{A}(x) \otimes \forall y^1 (\lp^{\bot}(y) \Par \place^{\bot}(1) \Par {\emL{B}}^{\bot}(x, y))) \\\Par \forall u^1 (\lp^{\bot}(u) \Par \place^{\bot}(0) \Par {\emL{C}}^{\bot}(u) \Par \exists v^1 (\lp(v) \otimes \place(1) \otimes \emL{D}(u, v)))
	\end{multline*}
	The next step is to invoke Theorem \ref{thm:firstCharacterization}. In order to do this, we apply the following replacement:
	\begin{equation*}
		\place(t) :\equiv t =_0 0 \Par \epsilon =_0 0\period
	\end{equation*}
	By proving similar statements in $\EPAomega$ and transporting them to $\ELPAomegal$ using Lemma \ref{lem:embedPeanoIntoLinear}, we can easily prove the axioms (tag-0), (tag-app), and (tag-con).
	
	For this definition of $\place(t)$, we can derive the two sequents
	\begin{itemize}
		\item $\fCenter \place^{\bot}(0), 1$
		\item $\fCenter \place(0)$
	\end{itemize}
	using (1), (d?), and ($!_2$) for the first sequent, and Lemma \ref{lem:embedPeanoIntoLinear} for the second one. Moreover, we can prove both
	\begin{itemize}
		\item $\fCenter \place^{\bot}(1), \epsilon =_0 0$
		\item $\fCenter (\epsilon =_0 0)^{\bot}, \place(1)$
	\end{itemize}
	using Lemma \ref{lem:embedPeanoIntoLinear} since we have $\place(1) \equiv (1 =_0 0 \Par \epsilon =_0 0)$.
	Then, with both Lemma \ref{lem:substitute} and Corollary \ref{cor:substituteTensorPar}, we can show
	\begin{multline*}
	\fCenter \exists x^1 (\lp(x) \otimes \emL{A}(x) \otimes \forall y^1 (\lp^{\bot}(y) \Par \epsilon \neq_0 0 \Par {\emL{B}}^{\bot}(x, y))) \\\Par \forall u^1 (\lp^{\bot}(u) \Par {\emL{C}}^{\bot}(u) \Par \exists v^1 (\lp(v) \otimes \epsilon =_0 0 \otimes \emL{D}(u, v)))\period
	\end{multline*}
	Finally, we reintroduce all abbreviations
	\begin{equation*}
	\fCenter \forallL x^1 (\emL{A}(x) \lto \existsLeps y^1 \emL{B}(x, y)) \lto \forallL u^1 (\emL{C}(u) \lto \existsLeps v^1 \emL{D}(u, v))
	\end{equation*}
	and apply Theorem \ref{thm:firstCharacterization} to it.
\end{proof}

\section{Counterexamples to ``On Weihrauch Reducibility and Intuitionistic Reverse Mathematics''}

In this section, we will discuss the results developed by Rutger Kuyper in \cite{kuyper2017}. There, he presented two theorems about the characterization of a formalization of Weihrauch reducibility in the system $\RCA_0$, which is the weakest of the big five calculi used in reverse mathematics. Note that the definitions of the following calculi differ from those usually meant in literature.

We start with the sequent calculus $\IQC$ that contains the usual terms, formulas, and rules of intuitionistic logic with the exception of disjunction and all of its rules. For the definition of $\EL_0$, elementary logic with quantifier-free induction, we differentiate between number and function terms, as well as number and function quantifiers. We use small greek letters $\alpha$, $\beta$, \dots, for function variables. Moreover, we add terms for $0$, the successor function $S$, symbols for all definitions of primitive recursive functions, lambda abstraction, and recursion. The axioms include equality axioms, defining axioms for all primitive recursive functions, the successor axioms, quantifier-free induction, axioms for lambda-abstraction and recursion, and the quantifier-free axiom of choice from numbers to numbers.

There are different definitions of this system (cf. \cite[pp.~243--244]{formalSystems} for an early definition that differs from \cite[p.~13]{fujiwara}, which is used today). Kuyper decided to use Dorais' definition of $\EL_0$ in \cite{dorais2014}. In his work, Dorais first defines the calculus $\EL$, which is the intuitionistic variant of $\RCA$. Dorais does not define disjunction as there exists a way of constructing a substitute using existential quantifiers, implications, and conjunctions. With the help of full induction, which $\EL$ provides, one can show that this substitute satisfies the rules of disjunction (cf. 1.3.7 in \cite[p.~21]{troelstra1973}). Dorais defines $\EL_0$ as a subsystem of $\EL$ where full induction is replaced by quantifier-free induction. This type of induction does not suffice to prove the rules of disjunction for our substitute. Kuyper noticed this problem (cf. Remark 2.4 in \cite[p.~1442]{kuyper2017}) but used Dorais' construction regardless.

Finally, we need $\ELexalpha$ where we restrict the contraction-rule of $\IQC$ to formulas not containing function quantifiers, and the weakening-rule to formulas $A$ and $\exists \alpha A$ where $A$ does not contain any function quantifiers. The results of the article are the following two theorems where the superscript $q$ refers to a negative translation:
\begin{theorem}[Theorem 6.4 in \cite{kuyper2017}]\label{kuyperMulti}
	Let $P_i = \forall \alpha_i (A_i(\alpha_i) \to \exists \beta_i B_i(\alpha_i, \beta_i))$ for $i \in \{0, 1\}$. Then the following are equivalent:
	\begin{enumerate}[label=\alph*)]
		\item There are an $n \in \N$ and $e_1, \dots, e_{n+1}$ such that $\RCA_0$ proves that $e_1$, \dots, $e_{n+1}$ witness that $P_0$ Weihrauch-reduces to the composition of $n$ copies of $P_1$.
		\item For $P'_i := \forall \alpha_i (A_i^q (\alpha_i) \to \exists \beta_i B_i^q (\alpha_i, \beta_i))$ we have that $\EL_0 + \MP$ proves $P'_1 \to P'_0$.
	\end{enumerate}
\end{theorem}

\begin{theorem}[Theorem 7.1 in \cite{kuyper2017}]\label{kuyperSingle}
	If $P_i$ and $P'_i$ are defined as before, then the following are equivalent:
	\begin{enumerate}[label=\alph*)]
		\item There are $e_1$ and $e_2$ such that $\RCA_0$ proves that $e_1$, $e_2$ witness that $P_0$ Weihrauch-reduces to $P_1$.
		\item We have that $(\EL_0 + \MP)^{\exists\alpha a}$ proves $P'_1 \to P'_0$.
	\end{enumerate}
\end{theorem}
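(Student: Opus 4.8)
The plan is to establish the two implications separately, the technical core being a soundness theorem for a tailored modified-realizability interpretation of the affine calculus $(\EL_0 + \MP)^{\exists\alpha a}$. Here the restriction of the contraction rule to formulas without function quantifiers is meant to be exactly what forces any derivation of $P'_1 \to P'_0$ to invoke the hypothesis $P'_1$ on at most one genuine function instance, mirroring the single use of $P_1$ permitted by Weihrauch reducibility. I would first isolate this soundness statement and the fact that, because the matrices $A_i^q$ and $B_i^q$ are (essentially) quantifier-free and hence decidable, realizability of these matrices coincides with their truth; Markov's principle $\MP$ will be responsible for the totality of the extracted functionals throughout.

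For the direction $(a) \Rightarrow (b)$, I would take the witnesses $e_1, e_2$ of the reduction, whose total computability is provable in $\RCA_0$, and read them as function terms (associates) of the calculus. The derivation then formalizes the reduction diagram directly: assuming $P'_1$, fix an instance $\alpha_0$ satisfying $A_0^q(\alpha_0)$; apply $e_1$ to obtain an instance $\alpha_1$ of $P_1$ and prove $A_1^q(\alpha_1)$ by transcribing the $\RCA_0$-verification; instantiate the hypothesis $P'_1$ \emph{once} at $\alpha_1$ to obtain $\beta_1$ with $B_1^q(\alpha_1, \beta_1)$; and finally apply $e_2$ to $(\alpha_0, \beta_1)$ and prove $B_0^q(\alpha_0, \beta_0)$. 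Since the single instantiation of $P'_1$ respects the affine contraction discipline and $\MP$ recovers the existential witness $\beta_0$ from decidable data, this yields the required proof of $P'_1 \to P'_0$.

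For the direction $(b) \Rightarrow (a)$, I would apply the modified-realizability soundness of the affine calculus to the given derivation of $P'_1 \to P'_0$. This produces realizers whose shape tracks the prenex structure of the problems: a forward functional depending only on the instance $\alpha_0$ (producing an instance of $P_1$), together with a backward functional depending on $\alpha_0$ and on the single solution $\beta_1$ returned by the one allowed call to $P_1$. Reading these realizers as indices $e_1, e_2$, I would verify inside $\RCA_0$, using that decidability of $B_i^q$ makes realizability coincide with truth and that $\MP$ secures totality, that $e_1$ and $e_2$ witness $P_0 \leq_W P_1$.

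The main obstacle I expect is the exact calibration of the affine structure. One must show both that the restricted contraction rule never covertly permits a second substantive use of the function-instance hypothesis $P'_1$ — which would correspond to composing two copies of $P_1$ and thereby overshoot true Weihrauch reducibility into the compositional regime of Theorem~\ref{kuyperMulti} — and that the restricted weakening rule for formulas of the form $\exists \alpha A$ does not block the extraction of a \emph{total} forward map on all instances. Pinning down this correspondence, so that exactly one instance of $P_1$ is consumed, neither more nor fewer, is the delicate heart of the argument; it is precisely at this junction between the syntactic restrictions and the intended computational reading that the statement turns out to be more fragile than it first appears.
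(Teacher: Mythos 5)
There is a fundamental problem here: the statement you are trying to prove is \emph{false}, and the paper does not prove it --- it refutes it. Theorem~\ref{kuyperSingle} is quoted from Kuyper's article only so that the paper can present a counterexample to it (Section~2, ``Constructing forbidden contractions''). Your proposal correctly identifies the ``delicate heart'' of the argument --- showing that the restricted contraction rule of $(\EL_0 + \MP)^{\exists\alpha a}$ never covertly permits a second substantive use of the hypothesis $P'_1$ --- but this is precisely the point at which the claim breaks down: the cut rule allows one to conceal a contraction that the calculus is supposed to forbid. Concretely, take true sentences $G$ and $H$ such that $\RCA_0$ proves none of $G \lor H$, $\lnot G \lor H$, $G \lor \lnot H$, $\lnot G \lor \lnot H$, put $A(x) :\equiv (x=0 \leftrightarrow G)$ and $B(x) :\equiv (x=0 \leftrightarrow H)$, and let $P_1$ be the problem that on input $\alpha_1$ asks for $\beta_1$ with $A(\beta_1(0))$ if $\alpha_1(0)=0$ and $B(\beta_1(0))$ otherwise, while $P_0$ asks for a single $\beta_0$ with $A(\beta_0(0)) \land B(\beta_0(1))$. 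Introducing the intermediate problem $P_{\nicefrac{1}{2}}$, which is $P_1$ with its function quantifiers replaced by number quantifiers, one proves $P'_1 \to P'_{\nicefrac{1}{2}}$, then proves $P'_{\nicefrac{1}{2}} \to P'_0$ by instantiating $P'_{\nicefrac{1}{2}}$ \emph{twice} (once at $x=0$, once at $x=1$) and contracting the two occurrences --- legal, because $P'_{\nicefrac{1}{2}}$ contains no function quantifiers --- and finally combines the two implications by cut. The resulting proof of $P'_1 \to P'_0$ satisfies clause b), yet clause a) fails: in the standard model any alleged witnesses $e_1, e_2$ can be evaluated on constant-zero inputs, this evaluation is provable in $\RCA_0$, and adding the consistent axioms $G$ and $\lnot H$ then forces $\RCA_0$ to decide $H$, a contradiction.

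This also explains why the soundness theorem your proposal rests on (direction (b) $\Rightarrow$ (a)) cannot exist for this calculus as defined: any realizability interpretation sound for cut and for contraction on function-quantifier-free formulas must validate the derivation above, whose conclusion has no single-use Weihrauch witness. The affine restriction controls how often the hypothesis \emph{appears}, not how often it is \emph{used}, since a cut against a contractible intermediate formula duplicates its computational content invisibly. The only fix suggested in the paper (attributed to Kuyper) is to restrict the calculus to free-cut-free proofs, and it is noted there that no proof of the repaired statement is known; the paper's own positive characterization instead abandons this framework entirely and works in a linear variant of $\EPAomega$ where the single-use discipline is enforced by the logic itself (Theorem~\ref{thm:secondCharacterization}).
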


\subsection{Needing infinitely many answers at once}

\subsubsection*{The counterexample}

Let $G(x)$ be a primitive recursive predicate such that $G(\bar n)$ where $\bar n$ is the term representing $n$ is true for all numbers $n \in \N$ while $\RCA_0$ is not able to prove $\forall x G(x)$.
We define
\begin{flalign*}
	&&P_1 &:= \forall \alpha_1 (0=0 \to \exists \beta_1 G(\alpha_1(0)))&\\
	\text{and} &&P_0 &:= \forall \alpha_0 (0=0 \to \exists \beta_0 \forall x G(x)) \period&
\end{flalign*}
Using $\EL_0$, we can prove $P'_1 \to P'_0$, i.e.
\begin{equation*}
	\forall \alpha_1 (\lnot\lnot 0=0 \to \exists \beta_1 \neg \neg G(\alpha_1(0))) \to \forall \alpha_0 (\lnot\lnot 0=0 \to \exists \beta_0 \neg \neg \forall x \neg \neg G(x)) \period
\end{equation*}
Theorem~\ref{kuyperMulti} tells us that there exists a number $n \in \N$ and $e_1, \dots, e_{n+1} \in \N$ such that $\RCA_0$ proves that $e_1, \dots, e_{n+1}$ witness the Weihrauch reduction from $P_0$ to $n$ copies of $P_1$. We define $\alpha_0$ and $\beta_1, \dots, \beta_n$ to be constant zero-functions and can therefore assume without loss of generality that the programs $e_1, \dots, e_{n+1}$ neither use $\alpha_0$ nor any of $\beta_1, \dots, \beta_n$ as oracles. Let $\Phi_{e}$ be the computable function induced by a program $e$. We will now show that the sequents $C_n \vdash D_n$ with
\begin{align*}
	C_n &:= \bigwedge_{i = 1}^{n} (\halts{\Phi_{e_i}(\alpha_1, \dots, \alpha_{i-1})} \to \alpha_i = \Phi_{e_i}(\alpha_1, \dots, \alpha_{i-1}))\\
	D_n &:= \bigwedge_{i=1}^{n} (\bigwedge_{j=1}^{i-1} G(\alpha_j(0)) \to \halts{\Phi_{e_i}(\alpha_1, \dots, \alpha_{i-1})}) \wedge (\bigwedge_{i=1}^{n} G(\alpha_i(0)) \to \forall x G(x))
\end{align*}
are not derivable in RCA$_0$ for any programs $e_1, \dots, e_n$.
For this, we assume and contradict $C_n \vdash D_n$ by induction on $n \in \N$.

Let $n=0$: In this case all conjunctions and premises collapse, i.e.~$C_0$ is empty and $D_0$ only consists of $\forall x G(x)$. But now, $\RCA_0$ proves $\forall x G(x)$. This is a contradiction to the assumption we made about $G$.

Let $n > 0$: Assume that $\RCA_0$ proves the sequent for $n$ copies of $P_1$ (and programs $e_1, \dots, e_{n+1}$) but not for $n-1$ copies. This implies that $\RCA_0$ proves $C_n \vdash \halts{\Phi_{e_1}}$ (and therefore even $C_n \vdash \alpha_1 = \Phi_{e_1}$) because the case $i=1$ in the first conjunction of $D_n$ means that we can prove the totality of $e_1$ directly. We are now interested in deriving $C_n \vdash G(\alpha_1(0))$: We can calculate an $m \in \N$ such that $\Phi_{e_1}(0) = \bar m$ is true because this is a $\Sigma^0_1$-sentence that we can witness by a natural number in order to make it quantifier-free. Since $\RCA_0$ contains all axioms for the definitions of primitive recursive functions, we can even prove this equality. In a similar fashion, the truth of $G(\bar m)$ implies its provability since $G$ is defined primitive recursively. Moreover, for $e_2, \dots, e_n$ there are programs $\tilde{e}_2, \dots, \tilde{e}_n$ that behave the same, but do not depend on the oracle $\alpha_1$ that is given by the total program $e_1$. Combining these results, deriving $C_n \vdash D_n$ in $\RCA_0$ implies a derivation of $\tilde{C}_{n-1} \vdash \tilde{D}_{n-1}$ with
\begin{align*}
	\tilde{C}_{n-1} &:= \bigwedge_{i = 1}^{n-1} (\halts{\Phi_{\tilde{e}_{i+1}}(\alpha_2, \dots, \alpha_i)} \to \alpha_{i+1} = \Phi_{\tilde{e}_{i+1}}(\alpha_2, \dots, \alpha_i))\\
	\tilde{D}_{n-1} &:= \bigwedge_{i=1}^{n-1} (\bigwedge_{j=1}^{i-1} G(\alpha_{j+1}(0)) \to \halts{\Phi_{\tilde{e}_{i+1}}(\alpha_2, \dots, \alpha_i)}) \wedge (\bigwedge_{i=1}^{n-1} G(\alpha_{i+1}(0)) \to \forall x G(x)) \period
\end{align*}
This means that we can prove $C_{n-1} \vdash D_{n-1}$ if we replace the programs $e_1$, \dots, $e_{n-1}$ by the programs $\tilde{e}_2$, \dots, $\tilde{e}_n$. However, this is a contradiction to our induction hypothesis.

\subsubsection*{The fix}
In private correspondence, Kuyper proposed a slight change of Theorem \ref{kuyperMulti} in order to avoid this problem:

\begin{theorem}[Update of Theorem \ref{kuyperMulti}]\label{kuyperMultiUpdated}
	Let $P_i = \forall \alpha_i (A_i(\alpha_i) \to \exists \beta_i B_i(\alpha_i, \beta_i))$ for $i \in \{0, 1\}$. Then the following are equivalent:
	\begin{enumerate}[label=\alph*)]
		\item There are an $n \in \N$ and $e_1, \dots, e_{n+1}$ such that $\RCA_0$ proves \textbf{that $P_1$ implies} that $e_1$, \dots, $e_{n+1}$ witness that $P_0$ Weihrauch-reduces to the composition of $n$ copies of $P_1$.
		\item For $P'_i := \forall \alpha_i (A_i^q (\alpha_i) \to \exists \beta_i B_i^q (\alpha_i, \beta_i))$ we have that $\EL_0 + \MP$ proves $P'_1 \to P'_0$.
	\end{enumerate}
\end{theorem}
The argument for this change was that some part of his proof (cf. Theorem 5.1 \cite[p.~1455]{kuyper2017}) seems to implicitly assume the truth of $P_1$. Our counterexample vanishes with this updated theorem because, assuming $P_1$, we can Weihrauch-reduce $P_0$ to zero copies of $P_1$. Needing at least one such copy was the integral part of our counterexample. From now on, we will work with this updated theorem.

\subsection{Independence-of-premise for quantifier-free formulas}\label{subsec:problemsWithEL0}

\subsubsection*{The counterexample}

The following counterexample shows that under the assumption that Theorem \ref{kuyperMultiUpdated} is correct, the system $\EL_0$ does not behave like an intuitionistic calculus of arithmetic. Kuyper derives his definition from \cite{dorais2014}, where the calculus $\EL$ is defined without disjunction and $\EL_0$ is then later introduced without adding it explicitly. Replacing disjunction by an equivalent construction using the existential quantifier works in calculi with full induction. This calculus $\EL_0$, however, replaces the axiom by quantifier-free induction. Assuming the correctness of Theorem \ref{kuyperMultiUpdated}, we can now show that this induction principle is not strong enough to yield the usual properties of disjunction. Consider the formulas
\begin{flalign*}
	&&P_1 &:= \forall \alpha_1 (\lnot G(\alpha_1(0)) \to \exists \beta_1 \forall e, x (T(e, e, x) \to T(e, e, \beta_1(e))))& \\
	&& P_0 &:= \forall \alpha_0 (0=0 \to \exists \beta_0 (\lnot G(\alpha_0(0)) \to \forall e', x' (T(e', e', x') \to T(e', e', \beta_0(e')))))\comma&
\end{flalign*}
as well as the axiom
\begin{equation*}
	Q := \exists \beta \forall e, x (T(e, e, x) \to T(e, e, \beta(e))) \period
\end{equation*}
Here, let $G$ be a formula such that $\RCA_0 + Q$ is unable to prove the true statement $\forall x G(x)$. We assume that some program in $\RCA_0$ can Weihrauch-reduce $P_0$ to zero copies of $P_1$ using the additional assumption $Q$ that is consistent with $\RCA_0$ and implies $P_1$. If we add the consistent axiom $\exists x \lnot G(x)$ to $\RCA_0 + Q$, we can still prove the reduction, of course. But now, since $Q$ and $\lnot G(\alpha_0(0))$ hold for some $\alpha_0$, the $\beta_0$ our program has to compute is the halting problem. A computable solution for the halting problem is inconsistent with $\RCA_0$ and, therefore, $\RCA_0 + Q$. This contradiction implies that we need at least one copy of $P_1$ for the Weihrauch reduction. Hence, the reduction yields a program that can compute a witness $\alpha_1(0)$ such that $\lnot G(\alpha_1(0))$ holds. This means that $\RCA_0$ together with the true axiom $Q$ proves $\exists x \lnot G(x)$, which cannot be and therefore leads to a contradiction. Theorem \ref{kuyperMultiUpdated} tells us that $\EL_0 + \MP$ is not able to prove $P_1' \to P_0'$. In an intuitionistic calculus, however, this proof should be trivial: Starting from $P_1$, such a proof would pull the $\exists \beta_1$ in front of $\lnot G(\alpha_1(0))$ via independence of premise for quantifier-free formulas, and weaken the statement by adding the premise $0=0$ in front of the existence quantifier.

\subsubsection*{The fix}

It is not clear how to repair this issue. The notion of realizability used in the proof of Kuyper's theorem might need a substantial change since it seems to be incompatible with giving witnesses for existential statements that heavily depend on already quantifier-free premises.

\subsection{Constructing forbidden contractions}

\subsubsection*{The counterexample}

This is a counterexample to Theorem 7.1
. Let $G$ and $H$ be true sentences of $\RCA_0$ such that this calculus does not prove $G \lor H$, $\lnot G \lor H$, $G \lor \lnot H$, or $\lnot G \lor \lnot H$.\footnote{As pointed out by Ulrich Kohlenbach, the existence of such sentences is guaranteed by Theorem 9 in \cite[p.~31]{lindstroem}, which is based on a result due to Dana Scott in \cite{scott1962}.} We abbreviate
\begin{flalign*}
	&&A(x) &:\equiv x=0 \leftrightarrow G&\\
	\text{and}&&B(x) &:\equiv x=0 \leftrightarrow H\comma&
\end{flalign*}
and define two problems $P_1$ and $P_0$ with an intermediate step $P_{\nicefrac{1}{2}}$:
\begin{flalign*}
	&P_1 &&:= \forall \alpha_1 (0=0 \to \exists \beta_1 ((\alpha_1(0)=0 \to A(\beta_1(0))) \land (\alpha_1(0) \neq 0 \to B(\beta_1(0)))))\comma&\\
	&P_{\nicefrac{1}{2}} &&:= \forall x (0=0 \to \exists y ((x=0 \to A(y)) \land (x \neq 0 \to B(y))))\comma&\\
	&P_0 &&:= \forall \alpha_0 (0=0 \to \exists \beta_0 (A(\beta_0(0)) \land B(\beta_0(1)))) \period&
\end{flalign*}
We can easily prove $P'_1 \to P'_{\nicefrac{1}{2}}$ in $(\EL_0 + \MP)^{\exists \alpha a}$ by replacing the function variables with number variables. For the proof of $P'_{\nicefrac{1}{2}} \to P'_0$, we use $P'_{\nicefrac{1}{2}}$ twice: First with $x=0$ to get some $y$ with $A(y)$, and then with $x=1$ to get some $z$ with $B(z)$. Using recursion, we can define a $\beta_0$ with $\beta_0(0) = y$ and $\beta_0(1) = z$, and yield $P'_0$. By contraction for formulas without function variables, we can contract the two instances of  $P'_{\nicefrac{1}{2}}$ to only one instance. By using the cut-rule, we can combine $P'_1 \to P'_{\nicefrac{1}{2}}$ and $P'_{\nicefrac{1}{2}} \to P'_0$ to a get proof of $P'_1 \to P'_0$.

Now, Theorem \ref{kuyperSingle} says that there are programs $e_1$ and $e_2$ in $\RCA_0$ that witness the Weihrauch reduction from $P_0$ to $P_1$. We consider what this reduction does in the standard model: Let $\alpha_0$ be the constant zero-function, and without loss of generality let $e_1$ compute an $\alpha_1$ with $\alpha_1(0) = 0$. Since $G$, which is the same as $A(0)$, is true, we can assume that $\beta_1$ is the constant zero-function, as well. We know that $H$, which is the same as $B(0)$, is true. Therefore, $e_2$ has to compute a $\beta_0$ with $\beta_0(1) = 0$. We conclude, that given two zero-functions as input, $e_2$ computes a function $\beta_0$ with $\beta_0(1)=0$. Like in the first counterexample, we can prove such true evaluations of programs in $\RCA_0$. If we consistently add $G$ and $\lnot H$ to $\RCA_0$, and define $\alpha_0$ and $\beta_1$ to be constant zero-functions, then the existence-part of $P_1$ holds for the $\alpha_1$ computed by $e_1$ and the Weihrauch reduction tells us that $H$ holds, which it does not by assumption.

\subsubsection*{The fix}

The whole procedure only works because we use the cut-rule as a way to conceal an otherwise impossible contraction from the calculus. The proposed solution by Kuyper is the removal of free cuts from the calculus. This could be a very uncomfortable system to work with, and it has also not been shown yet that the proof of the theorem still works with this updated calculus.

\subsection*{Acknowledgment}
The first part of this paper about the characterization ($\dagger$) establishes a refined and adapted result from my Master's thesis (cf.~\cite{Uft18}) supervised by Ulrich Kohlenbach. Additionally to this supervision, I am grateful for his help during this paper, as well as his early contributions by leading me to the right literature (by Paolo Oliva on functional interpretations of linear logic) and the idea to handle the quantifiers within problems differently from those that ensure the well-definedness of problems. Finally, I am thankful for the helpful suggestions that I received during the review process.

\bibliographystyle{amsplain}
\bibliography{characterization}

\providecommand{\noopsort}[1]{}
\providecommand{\bysame}{\leavevmode\hbox to3em{\hrulefill}\thinspace}
\providecommand{\MR}{\relax\ifhmode\unskip\space\fi MR }
% \MRhref is called by the amsart/book/proc definition of \MR.
\providecommand{\MRhref}[2]{%
  \href{http://www.ams.org/mathscinet-getitem?mr=#1}{#2}
}
\providecommand{\href}[2]{#2}
\begin{thebibliography}{10}

\bibitem{nonstandard}
Benno {\noopsort{Berg}}{van den Berg}, Eyvind Briseid, and Pavol Safarik,
  \emph{A functional interpretation for nonstandard arithmetic}, Annals of Pure
  and Applied Logic \textbf{163} (2012), no.~12, 1962--1994.

\bibitem{weihrauchDegrees}
Vasco Brattka and Guido Gherardi, \emph{Weihrauch degrees, omniscience
  principles and weak computability}, Journal of Symbolic Logic \textbf{76}
  (2011), no.~1, 143--176.

\bibitem{dorais2014}
Fran\c{c}ois~G. Dorais, \emph{Classical consequences of continuous choice
  principles from intuitionistic analysis}, Notre Dame J. Formal Logic
  \textbf{55} (2014), no.~1, 25--39.

\bibitem{springerFerreiraOliva}
Gilda Ferreira and Paulo Oliva, \emph{Functional interpretations of
  intuitionistic linear logic}, CSL 2009 (Berlin, Heidelberg) (Erich Gr{\"a}del
  and Reinhard Kahle, eds.), Lecture Notes in Computer Science, vol. 5771,
  Springer Berlin Heidelberg, 2009, pp.~3--19.

\bibitem{lmcsFerreiraOliva}
\bysame, \emph{{Functional Interpretations of Intuitionistic Linear Logic}},
  {Logical Methods in Computer Science} \textbf{7} (2011), no.~1, 1--22.

\bibitem{fujiwara}
Makoto Fujiwara, \emph{Intuitionistic and uniform provability in reverse
  mathematics}, Ph.D. thesis, Tohoku University, 2015.

\bibitem{fuji2020}
\bysame, \emph{Weihrauch and constructive reducibility between existence
  statements}, Computability \textbf{Pre-press} (2020), no.~Pre-press, 1--14.

\bibitem{girard1987}
Jean-Yves Girard, \emph{Linear logic}, Theoretical Computer Science \textbf{50}
  (1987), no.~1, 1--101.

\bibitem{girardNeu}
\bysame, \emph{The blind spot: Lectures on logic}, European Mathematical
  Society, Z\"urich, 2011.

\bibitem{hirst2019}
Jeffry~L. Hirst and Carl Mummert, \emph{Using ramsey's theorem once}, Archive
  for Mathematical Logic \textbf{58} (2019), 857--866.

\bibitem{kleene}
Stephen~C. Kleene, \emph{Countable functionals}, Constructivity in Mathematics
  (Amsterdam) (Arend Heyting, ed.), North-Holland, 1959, pp.~81--100.

\bibitem{kohlenbach2008}
Ulrich Kohlenbach, \emph{Applied proof theory: Proof interpretations and their
  use in mathematics}, Springer Monographs in Mathematics, Springer-Verlag
  Berlin Heidelberg, 2008.

\bibitem{kreisel}
Georg Kreisel, \emph{Interpretation of analysis by means of constructive
  functionals of finite types}, Constructivity in Mathematics (Amsterdam)
  (Arend Heyting, ed.), North-Holland, 1959, pp.~101--128.

\bibitem{formalSystems}
Georg Kreisel and Anne~S. Troelstra, \emph{Formal systems for some branches of
  intuitionistic analysis}, Annals of Mathematical Logic \textbf{1} (1970),
  no.~3, 229--387.

\bibitem{kuyper2017}
Rutger Kuyper, \emph{On weihrauch reducibility and intuitionistic reverse
  mathematics}, The Journal of Symbolic Logic \textbf{82} (2017), no.~4,
  1438--1458.

\bibitem{lindstroem}
Per Lindström, \emph{Aspects of incompleteness}, Lecture Notes in Logic,
  vol.~10, Springer-Verlag, Berlin, 1997.

\bibitem{olivaClassical}
Paulo Oliva, \emph{Computational interpretations of classical linear logic},
  WoLLIC 2007 (Berlin, Heidelberg) (Daniel Leivant and Ruy de~Queiroz, eds.),
  Lecture Notes in Computer Science, vol. 4576, Springer Berlin Heidelberg,
  2007, pp.~285--296.

\bibitem{dePaiva89}
Valeria {\noopsort{Paiva}}{de Paiva}, \emph{The dialectica categories},
  Categories in computer science and logic: proceedings of the AMS-IMS-SIAM
  Joint Summer Research Conference held June 14-20, 1987 with support from the
  National Science Foundation (John~Walker Gray and Andrej Ščedrov, eds.),
  Contemporary Mathematics, vol.~92, American Mathematical Society, 1989,
  pp.~47--62.

\bibitem{dePaiva1991}
\bysame, \emph{{The Dialectica categories}}, Tech. Report UCAM-CL-TR-213,
  University of Cambridge, Computer Laboratory, 1991.

\bibitem{scott1962}
Dana Scott, \emph{Algebras of sets binumerable in complete extensions of
  arithmetic}, Recursive function theory (J. C. E. Dekker, editor) \textbf{5}
  (1962), 117--121.

\bibitem{shirahata}
Masaru Shirahata, \emph{The dialectica interpretation of first-order classical
  affine logic}, Theory and Applications of Categories \textbf{17} (2006),
  no.~4, 49--79.

\bibitem{troelstra1973}
Anne~S. Troelstra (ed.), \emph{Metamathematical investigation of intuitionistic
  arithmetic and analysis}, Lecture Notes in Mathematics, vol. 344,
  Springer-Verlag Berlin Heidelberg, 1973.

\bibitem{Uft18}
Patrick Uftring, \emph{Proof-theoretic characterization of {W}eihrauch
  reducibility}, Master's thesis, Department of Mathematics, Universit{\"a}t
  Darmstadt, 2018.

\bibitem{Weihrauch}
Klaus Weihrauch, \emph{The degrees of discontinuity of some translators between
  representations of the real numbers}, Tech. Report TR-92-050, International
  Computer Science Institute, Berkeley, 1992.

\bibitem{Weihrauch2}
\bysame, \emph{The {TTE}-interpretation of three hierarchies of omniscience
  principles}, {Informatik Berichte} 130, FernUniversit\"at in Hagen, Hagen,
  1992.

\end{thebibliography}
\end{document}